\def\blfootnote{\xdef\@thefnmark{}\@footnotetext}
\newtheorem{thm}{Theorem}[section]
\newtheorem{cor}[thm]{Corollary}
\newtheorem{lem}[thm]{Lemma}
\newtheorem{prop}[thm]{Proposition}
\theoremstyle{definition}
\newtheorem{defn}[thm]{Definition}
\theoremstyle{remark}
\newtheorem{rem}[thm]{Remark}
\newtheorem{ex}[thm]{Example}
\newtheorem{claim}[thm]{Claim}
\newfont{\eufm}{eufm10}
\renewcommand{\phi}{\varphi}
\newcommand{\Z}{\mathbb Z}
\newcommand{\R}{\mathbb R}
\newcommand{\Q}{\mathbb Q}
\newcommand{\Aut}{\operatorname{Aut}}
\renewcommand{\H}{\mathcal{H}}
\newcommand{\hyp}{\mathbb{H}}
\def\mc {\mathcal}
\def\onto {\twoheadrightarrow}
\begin{document}

\title{Actions of solvable Baumslag-Solitar groups on hyperbolic metric spaces}
\author{Carolyn R. Abbott   \and Alexander J. Rasmussen}

\date{}
\maketitle

\begin{abstract}
We give a complete list of the cobounded actions of solvable Baumslag-Solitar groups on hyperbolic metric spaces up to a natural equivalence relation. The set of equivalence classes  carries a natural partial order first introduced by Abbott--Balasubramanya--Osin, and we describe the resulting poset completely. There are finitely many equivalence classes of actions, and each equivalence class contains the action on a point, a tree, or the hyperbolic plane.
\end{abstract}

%\tableofcontents

%%%%%%%%%%%%%%%%%%%%%%%%%%%%%%%%%%%%%%%%%%%%%%%%%%%%%%%%%%%%%%%%%%%%%%%%%%%%%%%%%%%%%%%%%%%%%%%%%%%%%%%%%%%%%%%%%%%%%%

\section{Introduction}

The Baumslag-Solitar groups $BS(m,n)$ are a classically-studied family of groups defined by the particularly straightforward presentations $\langle a, t \mid ta^mt^{-1}=a^n\rangle$. In the case $m=n=1$, $BS(1,1)$ is isomorphic to the abelian group $\Z^2$. In general, for $n\geq 2$, $BS(1,n)$ is a nonabelian solvable group via the isomorphism $BS(1,n)\cong \Z\left[\frac{1}{n}\right]\rtimes \Z$.

The group $BS(1,n)$ admits several natural actions on hyperbolic metric spaces. The Cayley graph of $BS(1,n)$ with respect to the generating set $\{a^{\pm 1},t^{\pm 1}\}$ consists of a number of ``sheets,'' each of which is quasi-isometric to the hyperbolic plane $\hyp^2$. The sheets are glued together along complements of horoballs in a pattern described by the $(n+1)$-regular tree. The result is pictured in Figure \ref{proj} for the case $n=2$. Collapsing each sheet down to a vertical geodesic line gives a projection from the Cayley graph to the $(n+1)$-regular tree. Moreover, the action of $BS(1,n)$ on its Cayley graph permutes the fibers of this projection, so that we obtain an action of $BS(1,n)$ on the $(n+1)$-regular tree. Similarly, the idea of ``collapsing the sheets down to a single sheet'' gives an action on the hyperbolic plane, although this idea is a bit harder to formalize.

\begin{figure}[h]
\label{proj}
\begin{center}
\includegraphics[width=0.7\textwidth]{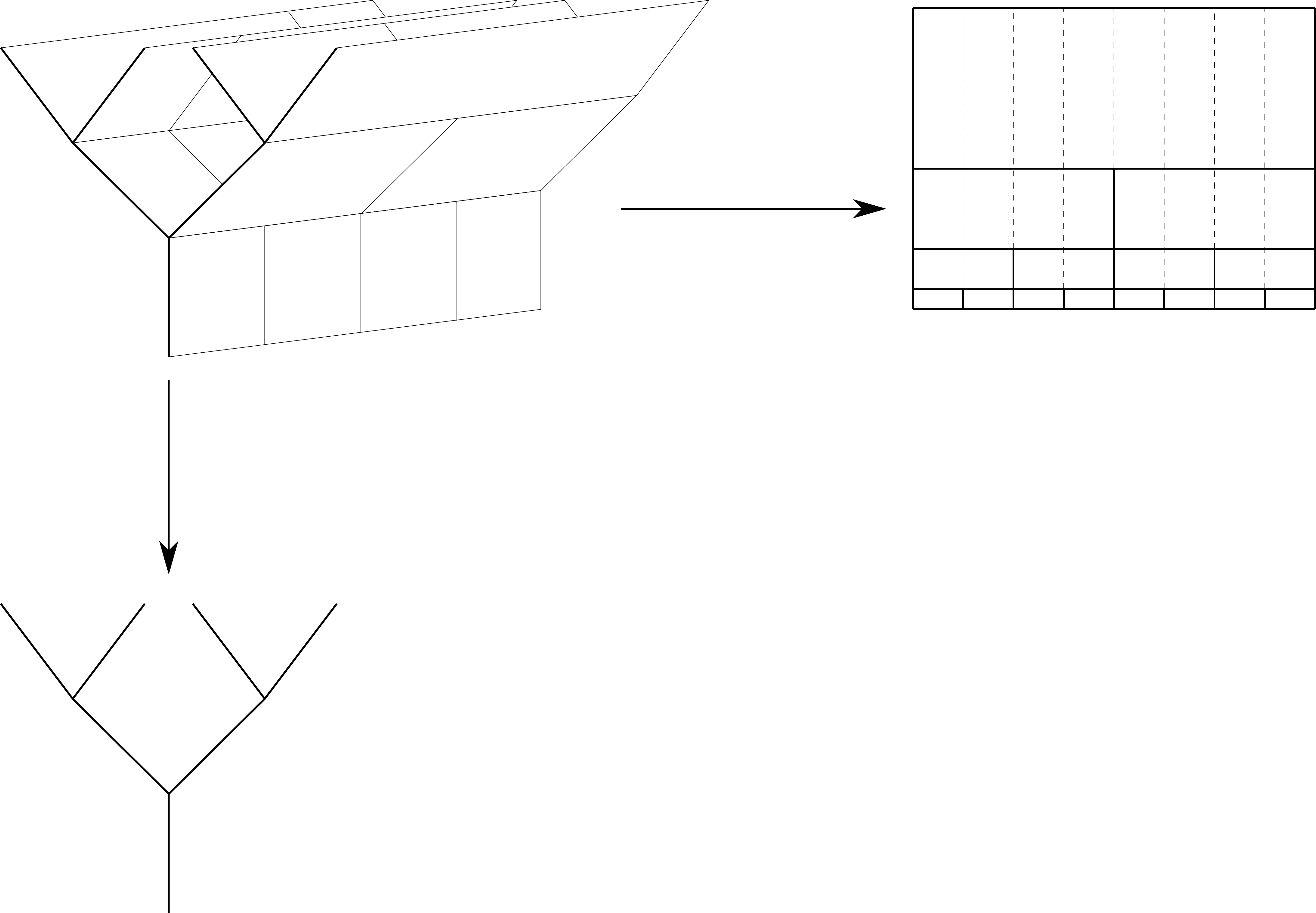}
\caption{Two natural actions of the group $BS(1,2)$ via projections.}
\end{center}
\end{figure}

Formally, the action of $BS(1,n)$ on $\hyp^2$ is given by the representation \[BS(1,n)\to \operatorname{PSL}(2,\R),\] where  \[ a\mapsto \begin{pmatrix} 1 & 1 \\ 0 & 1 \end{pmatrix}, \quad t \mapsto \begin{pmatrix} \sqrt{n} & 0 \\ 0 & 1/\sqrt{n} \end{pmatrix}.\] Another way to obtain the natural action on the $(n+1)$-regular tree is by expressing $BS(1,n)$ as an HNN extension over the subgroup $\langle a \rangle \cong \Z$ and considering the action of $BS(1,n)$ on the Bass-Serre tree of the resulting one-edge graph of groups.

In addition to these actions, $BS(1,n)$ admits an obvious homomorphism $BS(1,n)\to \Z$ defined by $a\mapsto 0$, $t\mapsto 1$. This defines an action of $BS(1,n)$ on (the hyperbolic metric space) $\R$ via integral translations. This action may also be obtained by collapsing either the hyperbolic plane or the $(n+1)$-regular tree onto a vertical geodesic in a height-respecting manner and noting that $BS(1,n)$ permutes the fibers of the resulting projection. Even more trivially, any group admits an action on a point (which is a hyperbolic metric space).

All of these actions are \textit{cobounded} in the sense that the orbit of a point under the action admits the entire space as a bounded neighborhood. One may naturally wonder whether these four actions (the actions on a tree, the hyperbolic plane, the line, and a point) give a complete list of the nontrivial cobounded actions of $BS(1,n)$ on hyperbolic metric spaces up to equivalence. In this paper we show that this is indeed the case \textit{if} $n$ is prime.

In the case that $n$ is \emph{not} prime, we show that $BS(1,n)$ admits actions on certain other Bass-Serre trees which may be understood algebraically. For each divisor $l$ of $n$, $\Z\left[\frac{1}{l}\right]$ is a subring of $\Z\left[\frac{1}{n}\right]$. We may form an ascending HNN extension of $\Z\left[\frac{1}{l}\right]$. Specifically, consider the one-edge graph of groups with vertex group $\Z\left[\frac{1}{l}\right]$ and edge group $\Z\left[\frac{1}{l}\right]$ which includes isomorphically onto $\Z\left[\frac{1}{l}\right]$ on one end and as the subgroup $n\Z\left[\frac{1}{l}\right]$ on the other end. It is not too hard to show that the fundamental group of this graph of groups is $BS(1,n)$ and thus there is an action of $BS(1,n)$ on the corresponding Bass-Serre tree. Considering these actions on Bass-Serre trees together with the canonical actions described in the above paragraph, we show that this gives a complete list of the cobounded hyperbolic actions of $BS(1,n)$ (up to an equivalence relation, described below). Before stating these results precisely, we need to introduce some terminology.

\subsection{Hyperbolic structures on groups}

In groups that admit interesting actions on hyperbolic metric spaces, it is natural to wonder whether it is possible to describe all such actions explicitly. Unfortunately, this (slightly naive) goal is currently unattainable for almost all commonly studied groups. For instance, using the machinery of combinatorial horoballs introduced by Groves--Manning in \cite{GM}, one may produce uncountably many \textit{parabolic} actions of any countable group on hyperbolic metric spaces, i.e. actions with a fixed point on the boundary and all group elements acting elliptically or parabolically. For this reason, we restrict to considering \textit{cobounded} actions on hyperbolic metric spaces, which are never parabolic. Moreover, we must use some notion of \textit{equivalence} for the actions of a fixed group on different hyperbolic spaces, as it is quite easy to modify an action on a given hyperbolic space equivariantly to produce an action on a quasi-isometric space. The equivalence relation we consider, introduced in \cite{ABO}, is roughly \textit{equivariant quasi-isometry}. See Section \ref{section:background} for more details.

Having restricted to cobounded actions up to equivalence, it is usually still quite difficult to describe explicitly all of the equivalence classes of actions of a given group on different hyperbolic metric spaces. For instance, in \cite{ABO} Abbott--Balasubramanya--Osin considered the hyperbolic actions of \textit{acylindrically hyperbolic} groups, a very wide class of groups all displaying some features of negative curvature. There they showed that any acylindrically hyperbolic group admits uncountably many distinct equivalence classes of actions on hyperbolic spaces.

But for groups which don't display strong features of negative curvature, it may be possible to give a complete list of their cobounded hyperbolic actions. For instance, in \cite{ABO} Abbott--Balasubramanya--Osin give a complete list of the equivalence classes of cobounded actions of $\Z^n$ on hyperbolic metric spaces. More recently, in \cite{Bal}, Balasubramanya gives a complete list of the actions of \textit{lamplighter groups} on hyperbolic spaces. Our work draws inspiration from her strategy.

In these cases, it is possible to say even more about the actions of a fixed group on different hyperbolic metric spaces. We are interested in the question of when one action ``retains more information'' about the group than another. This question leads to a partial order on the set of equivalence classes of actions of a group $G$ on hyperbolic spaces $X$, defined in \cite{ABO}. Roughly, we say that $G\curvearrowright X$ \emph{dominates} $G\curvearrowright Y$ when the action $G\curvearrowright Y$ may be obtained by equivariantly \textit{coning off} certain subspaces of $X$. See Section \ref{sec:qpar} for the precise definition. This partial order descends to a partial order on equivalence classes of actions.

Hence, for a group $G$, the set of equivalence classes of actions of $G$ on different hyperbolic metric spaces is a poset $\mathcal{H}(G)$. In this paper we give a complete description of the poset $\mathcal H(BS(1,n))$. Let $n=p_1^{n_1}p_2^{n_2}\dots p_k^{n_k}$ be the prime factorization of $n$ and let $\mathcal K_n$ be the poset $2^{\{1,\dots,k\}}\setminus\{\emptyset\}$, with the partial order given by inclusion. 
\begin{thm}\label{thm:BS1nstructure}
For any $n\geq 2$, $\mathcal H(BS(1,n))$ has the following structure: $\mathcal H_{qp}(BS(1,n))$, the subposet of quasi-parabolic structures, consists of a copy of $\mathcal K_n$ and a single additional structure which is incomparable to every element of $\mathcal K_n$; every quasi-parabolic structure dominates a single lineal structure, which dominates a single elliptic structure (see Figure \ref{fig:BS1nposet}).
\end{thm}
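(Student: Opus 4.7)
The plan is to realize each element of the claimed poset by an explicit action, verify the domination relations among them, and then prove completeness by classifying all cobounded hyperbolic actions. The most delicate part is isolating the family of Bass-Serre trees corresponding to nonempty subsets $S\subseteq\{1,\dots,k\}$ and matching the induced domination structure precisely with inclusion in $\mathcal{K}_n$.

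\textbf{Construction.} I would first exhibit a representative action for every claimed equivalence class. The elliptic structure is the trivial action on a point; the lineal structure is the action on $\R$ by integer translations through the homomorphism $a\mapsto 0$, $t\mapsto 1$. For each nonempty $S\subseteq\{1,\dots,k\}$, I construct a subgroup $A_S\leq \Z[1/n]$ using the ideal in the $n$-adic integers $\Z_n$ corresponding to the primes $\{p_i : i\in S\}$, and express $BS(1,n)$ as an HNN extension with vertex group $A_S$. The associated Bass-Serre tree $T_S$ provides a cobounded action; the classical $(n+1)$-regular tree corresponds to $S=\{1,\dots,k\}$. The action on $\hyp^2$ via the matrix representation from the introduction supplies the remaining quasi-parabolic structure.

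\textbf{Poset structure.} I would verify that each $T_S$ and $\hyp^2$ are quasi-parabolic by exhibiting a unique $BS(1,n)$-fixed point at infinity together with some loxodromic element, and that they are pairwise inequivalent by computing their sets of loxodromic elements. For $S\subsetneq S'$ the $A_{S'}$-cosets refine the $A_S$-cosets, yielding an equivariant map $T_{S'}\to T_S$ exhibiting $T_{S'}$ as dominating $T_S$ via coning off the $A_{S'}/A_S$-classes in each vertex stabilizer; when $S$ and $S'$ are incomparable this map fails to exist in either direction and the loxodromic sets differ, so neither dominates the other. Incomparability of $\hyp^2$ with every $T_S$ uses that $a$ is parabolic on $\hyp^2$ but elliptic on every $T_S$ (hence $\hyp^2$ cannot be dominated by a $T_S$), while $t$ acts differently on the Gromov boundaries. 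Uniqueness of the lineal structure is forced because $BS(1,n)^{\mathrm{ab}}\cong\Z$, so any lineal action factors through the height homomorphism up to sign; both $\hyp^2$ and every $T_S$ dominate this lineal action via the height function and the latter dominates the point.

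\textbf{Completeness.} The main obstacle is ruling out any further equivalence classes. Given an arbitrary cobounded action $BS(1,n)\curvearrowright X$, I would split into cases based on the dynamics of the normal subgroup $\langle\!\langle a \rangle\!\rangle=\Z[1/n]$. Since $\Z[1/n]$ is locally cyclic, on a hyperbolic space it acts either elliptically, with a single fixed boundary point (parabolically/quasi-parabolically as a subgroup), or with two fixed boundary points; the conjugation relation $tat^{-1}=a^n$ severely constrains how $t$ can interact with these fixed sets. When $\langle\!\langle a\rangle\!\rangle$ is elliptic the action reduces to an action of $\Z=BS(1,n)/\langle\!\langle a\rangle\!\rangle$, giving the elliptic or lineal class. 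When $\langle\!\langle a\rangle\!\rangle$ fixes a boundary point, the kernel of the quasi-cocycle associated with that fixed point picks out a subgroup of $\Z[1/n]$; invariance under the $a\mapsto na$ action forces this subgroup to be of the form $A_S$ for some nonempty $S$, and a careful comparison then shows $X$ is equivalent to $T_S$ or, in the remaining case involving two boundary points, to $\hyp^2$. This $n$-adic classification of relevant subgroups is exactly what injects $\mathcal{K}_n$ into the picture, and is where I would expect to spend most of the technical effort.
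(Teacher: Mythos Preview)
Your construction of the explicit actions and the domination relations among them is broadly in the right spirit, but the completeness argument---which is where essentially all of the paper's technical effort goes---has a genuine gap. You propose that in a quasi-parabolic action ``the kernel of the quasi-cocycle associated with that fixed point picks out a subgroup of $\Z[1/n]$,'' and that $\alpha$-invariance then forces this subgroup to be one of the $A_S$. But the Busemann pseudocharacter $\rho$ vanishes identically on $H=\Z[1/n]$ in \emph{every} such action: since $BS(1,n)$ is amenable $\rho$ is a homomorphism, and $H$ has finite index over the commutator subgroup (Lemma~\ref{lem:commutator}), so $\rho|_H\equiv 0$. Thus $\rho$ records nothing beyond the sign of $\rho(t)$ and cannot distinguish the subsets $S$. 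The correct invariant is more delicate: it is the \emph{confining subset} $Q\subset H$ of Definition~\ref{def:confining} (following Caprace--Cornulier--Monod--Tessera). The paper shows (Proposition~\ref{prop:qpconfining}) that every quasi-parabolic structure comes from a symmetric confining subset under $\alpha$ or $\alpha^{-1}$, then builds a two-way correspondence between confining subsets under $\alpha$ and ideals of $\Z_n$ via the maps $\mathcal{L}$ and $\mathcal{S}$ (Section~\ref{section:conftoideal}, Lemmas~\ref{lem:SLTsubsetT} and~\ref{lem:TequivSLT}), and separately proves that all confining subsets under $\alpha^{-1}$ yield the single class $[Q^-\cup\{t^{\pm1}\}]$ corresponding to $\hyp^2$ (Proposition~\ref{prop:singlealpha^-1}). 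None of this is visible in your sketch.

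A few smaller points. Your case ``$\Z[1/n]$ has two fixed boundary points $\Rightarrow \hyp^2$'' is mislabelled: in the $\hyp^2$ action $H$ fixes only $\infty$. Your incomparability argument for $\hyp^2$ versus $T_S$ via ``$a$ parabolic versus elliptic'' gives at best one direction; the paper instead notes that the global fixed point is the \emph{attracting} fixed point of $t$ on $\hyp^2$ but the \emph{repelling} one on each $T_S$, so comparability either way would make both fixed points of $t$ global, contradicting quasi-parabolicity. Finally, $BS(1,n)^{\mathrm{ab}}\cong \Z\oplus\Z/(n-1)\Z$, not $\Z$; the paper's uniqueness argument for the lineal class instead observes that $\tau(g)=\tau(tgt^{-1})=n\,\tau(g)$ forces every $g\in H$ to have translation length zero, hence $H$ acts elliptically in any lineal action and the structure is $[H\cup\{t^{\pm1}\}]$.
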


\begin{figure}[h!]

\centering

\begin{tikzpicture}[scale=0.7]

\node[circle, draw, minimum size=1.25cm] (triv) at (0,-3) {$\curvearrowright *$};
\node[circle, draw, minimum size=1.25cm] (lin) at (0,0) {$\curvearrowright \R$};
\node[circle, draw, minimum size=1.25cm] (hyp) at (-3,3) {$\curvearrowright \hyp^2$};
\node[circle, draw, minimum size=1.25cm] (bs) at (3,8) {$\curvearrowright T$};

\node (low1) at (1.5,3) {};
\node (low2) at (2,3) {};
\node (low3) at (2.5,3) {$\dots$};
\node (low4) at (3,3) {};
\node (low5) at (3.5,3) {};

\node (up1) at (1.5,5) {};
\node (up2) at (2,5) {};
\node (up3) at (2.5,5) {$\dots$};
\node (up4) at (3,5) {};
\node (up5) at (3.5,5) {};

\node[red] (poset) at (5.7,4) {$2^{\{1,\ldots,k\}}$};

\draw[thick] (triv) -- (lin) -- (hyp);
\draw[thick] (lin) -- (low1);
\draw[thick] (lin) -- (low2);
\draw[thick] (lin) -- (low4);
\draw[thick] (lin) -- (low5);

\draw[thick] (up1) -- (bs);
\draw[thick] (up2) -- (bs);
\draw[thick] (up4) -- (bs);
\draw[thick] (up5) -- (bs);

\draw[thick, dotted, red, rotate=-20] (0.2, 4.3) ellipse (80pt and 150pt); 
\end{tikzpicture}

\caption{The poset $\mathcal{H}(BS(1,n))$. The subposet circled in red is isomorphic to the power set $2^{\{1,\dots, k\}}$, which is a lattice.}
 \label{fig:BS1nposet}
\end{figure}

Following \cite{ABO}, we say a group $G$ is \emph{$\mc H$--accessible} if $\mc H(G)$ contains a largest element. Otherwise, we say $G$ is \emph{$\mc H$--inaccessible}.  The following result is immediate.

\begin{cor}
$BS(1,n)$ is $\mathcal H$--inaccessible.
\end{cor}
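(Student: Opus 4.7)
The plan is to deduce this immediately from Theorem~\ref{thm:BS1nstructure}. Recall that $\mathcal H$-inaccessibility just means the poset $\mathcal H(G)$ has no largest element. So it is enough to exhibit two distinct maximal elements of $\mathcal H(BS(1,n))$, or equivalently two elements that share no common upper bound.

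By Theorem~\ref{thm:BS1nstructure}, every non-quasi-parabolic structure is strictly dominated by every quasi-parabolic structure, so any largest element of $\mathcal H(BS(1,n))$ would have to lie in $\mathcal H_{qp}(BS(1,n))$ and dominate every element of that subposet. I would therefore rule this out by producing two incomparable elements of $\mathcal H_{qp}(BS(1,n))$. Since $n\geq 2$ we have $k\geq 1$, so $\mathcal K_n=2^{\{1,\dots,k\}}\setminus\{\emptyset\}$ is nonempty; pick any $\xi\in\mathcal K_n$ (for concreteness, the maximum $\{1,\dots,k\}$ corresponding to the Bass-Serre tree action on $T$). Let $\eta$ denote the ``additional'' quasi-parabolic structure from the theorem statement (the $BS(1,n)\curvearrowright\hyp^2$ action). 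By the theorem, $\eta$ is incomparable to every element of $\mathcal K_n$, and in particular to $\xi$.

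From this incomparability it is straightforward to conclude: no quasi-parabolic structure $\alpha$ can dominate both $\xi$ and $\eta$ simultaneously. Indeed, if $\alpha\in\mathcal K_n$ then $\alpha$ does not dominate $\eta$ (they are incomparable), and if $\alpha=\eta$ then $\alpha$ does not dominate $\xi$. Hence no element of $\mathcal H_{qp}(BS(1,n))$ is an upper bound for $\{\xi,\eta\}$, so a fortiori no element of $\mathcal H(BS(1,n))$ is a largest element, and $BS(1,n)$ is $\mathcal H$-inaccessible.

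There is no real obstacle here: the corollary is a direct combinatorial consequence of the shape of the poset already established in Theorem~\ref{thm:BS1nstructure}. The only thing to be careful about is handling the boundary case $k=1$ (i.e., $n$ a prime power), where $\mathcal K_n$ is a single point; but even then the pair $(\xi,\eta)$ consists of two incomparable maximal elements, so the argument above goes through verbatim.
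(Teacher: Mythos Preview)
Your argument is correct and is exactly the intended one: the paper states the corollary immediately after Theorem~\ref{thm:BS1nstructure} without proof, since it is a direct consequence of the poset having the two incomparable maximal quasi-parabolic structures (the top of the $\mathcal K_n$ lattice and the $\hyp^2$ action). Your write-up simply makes explicit the one-line reasoning the paper leaves to the reader.
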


Farb--Mosher  prove in \cite{FM1} that two solvable Baumslag-Solitar groups $BS(1,m)$ and $BS(1,n)$ with $m,n\geq 2$ are quasi-isometric if and only if they are commensurable, which occurs if and only if there exist integers $r,i,j>0$ such that $m=r^i$ and $n=r^j$.  In this case, we have $\mc K_n=\mc K_m$, which yields the following corollary.

\begin{cor} \label{cor:qirigidity}
If $BS(1,m)$ and $BS(1,n)$ are quasi-isometric, then the posets $\mc H(BS(1,m))$ and $\mc H(BS(1,n))$ are isomorphic.
\end{cor}

Theorem \ref{thm:BS1nstructure} may be related to the algebraic description of Bass-Serre trees described above. Recall that for each divisor $l$ of $n$ there is a Bass-Serre tree of $BS(1,n)$ corresponding to the subring $\Z\left[\frac{1}{l}\right]$ of $\Z\left[\frac{1}{n}\right]$. Note moreover that if the integers $l$ and $m$ both have the same prime divisors then in fact $\Z\left[\frac{1}{l}\right]=\Z\left[\frac{1}{m}\right]$. Thus the subring and the Bass-Serre tree only depend on the set of prime divisors of $l$. We obtain one Bass-Serre tree for each subset of $\{p_1,\ldots,p_k\}$, and these correspond exactly to the subposet $\mc K_n$ of $\mc H(BS(1,n))$.

\subsection{About the proof}

Any group action on a hyperbolic space falls into one of finitely many types (elliptic, parabolic, lineal, quasi-parabolic, or general type) depending on the number of fixed points on the boundary and the types of isometries defined by various group elements. See Section \ref{section:background} for precise definitions.

Since $BS(1,n)$ is solvable, it contains no free subgroup and therefore by the Ping-Pong Lemma, any non-elliptic action of $BS(1,n)$ on a hyperbolic metric space must have a fixed point on the boundary. Since we consider only cobounded actions, we see that $BS(1,n)$ can have only lineal or quasi-parabolic cobounded actions on hyperbolic metric spaces. Hence we are left to consider such actions.

We crucially use the fact that $BS(1,n)$ can be written as a semidirect product $\Z\left[\frac{1}{n}\right]\rtimes_\alpha \Z$. In \cite{CCMT} Caprace--Cornulier--Monod--Tessera classified the lineal and quasi-parabolic actions of certain groups $H\rtimes \Z$ in the language of \textit{confining subsets} of $H$ under the action of $\Z$ (see Section \ref{section:background}).

Using techniques developed in \cite{CCMT}, we show that the lineal and quasi-parabolic actions of $BS(1,n)$ naturally correspond to confining subsets of $\Z\left[\frac{1}{n}\right]$ under the actions of $\alpha$ and $\alpha^{-1}$, and so we are led to try to classify such subsets. The confining subsets under the action of $\alpha^{-1}$ are straightforward to classify, and in fact they all correspond to (the equivalence class of) the action of $BS(1,n)$ on $\mathbb H^2$. On the other hand, the classification of confining subsets under the action of $\alpha$ is more complicated. We show that such subsets correspond in a natural way to ideals in the ring of $n$-adic integers $\Z_n$. To see how such ideals arise, we consider confining subsets $Q\subset \Z\left[\frac{1}{n}\right]$ under the action of $\alpha$ and  write elements  $a\in Q$ in base $n$: \[a=\pm a_r\cdots a_0.a_{-1} \cdots a_{-s},\] allowing any number of 0's at the end of this expression. We then consider the set of all $\ldots x_2 x_1 x_0\in \Z_n$ such that for any $s$, the sequence $x_sx_{s-1}\ldots x_0$ appears as the sequence of digits to the right of the decimal point in some element of $Q$. That is, we require that there is a number of the form \[a_r \cdots a_0 .x_s \cdots x_0\] in $Q$ for arbitrarily large $s$. We show that the set of $n$-adic integers $\ldots x_s x_{s-1} \ldots x_0$ obtained in this manner is an ideal of $\Z_n$. It may be thought of as a kind of \emph{limit set} of $Q$. We also show that this process can be reversed to associate confining subsets of $\Z[\frac1n]$ to ideals of $\Z_n$. With this correspondence in hand, we describe how all of the resulting actions are equivalent to the actions of $BS(1,n)$ on certain Bass-Serre trees.

\subsection{Other results and results in the literature}

The poset described in Theorem \ref{thm:BS1nstructure} is interesting because of its asymmetry (when $n$ is not a power of a prime). In \cite{Bal} Balasubramanya described $\mathcal{H}(L_n)$ where $L_n$ is the Lamplighter group $\Z/n\Z \wr \Z=\Z/n\Z \wr \langle t \rangle$. In this case, $\mathcal{H}(L_n)$ splits into two isomorphic sub-posets of quasi-parabolic structures (corresponding to actions in which the fixed point of $L_n$ is the attracting fixed point of $t$, and respectively the repelling fixed point of $t$) which each dominate a single lineal structure which in turn dominates an elliptic structure. Work of the authors in \cite{AR} shows that this structure also holds for semidirect products $\Z^2 \rtimes_\alpha \Z$ where $\alpha\in SL(2,\Z)$ is an Anosov matrix. It would be interesting to see what extra properties of a semidirect product $G\rtimes \Z$ are needed to ensure this kind of symmetry.

In \cite{Bal} Balasubramanya considers hyperbolic actions of general wreath products $G \wr \Z=\left(\bigoplus_{n \in \Z} G\right) \rtimes \Z$ and shows that $\mathcal{H}(G\wr \Z)$ always contains two copies of the poset of subgroups of $G$. In the case $G=\Z/n\Z$ we have $G\wr \Z=L_n$ and this suffices to describe all of $\mathcal{H}(L_n)$.

In Section \ref{section:generating} we describe a general algebraic construction of quasi-parabolic structures on semidirect products $H\rtimes \Z$. We show that in the case of $\Z \wr \Z$, this construction suffices to produce a countable chain of quasi-parabolic structures.

\noindent{\bf Acknowledgements:}  The first author was partially supported by  NSF Award DMS-1803368.  The second author was partially supported by NSF Award DMS-1610827.  The authors thank Denis Osin for pointing out Corollary \ref{cor:qirigidity} and the anonymous referee for helpful comments.

%%%%%%%%%%%%%%%%%%%%%%%%%%%%%
\section{Background}  \label{section:background}

\subsection{Actions on hyperbolic spaces} \label{sec:actions}

Given a metric space $X$, we denote by $d_X$ the distance function on $X$. A map $f\colon X\to Y$ between metric spaces $X$ and $Y$ is a \emph{quasi-isometric embedding} if there is a constant $C$ such that for all $x,y\in X$, \[\frac1C d_X(x,y)-C\leq d_Y(f(x),f(y))\leq  Cd_X(x,y)+C.\]  If, in addition, $Y$ is contained in the $C$--neighborhood of $f(X)$ then $f$ is called a \emph{quasi-isometry}. If a group $G$ acts (by isometries) on $X$ and $Y$, then a map $f\colon X\to Y$ is \emph{coarsely $G$--equivariant} if for every $x\in X$ we have \[\sup_{g\in G} d_Y(f(gx),gf(x))<\infty.\]

We will assume that all actions are by isometries. The action of a group $G$ on a metric space $X$ is \emph{cobounded} if there exists a bounded diameter subspace $B\subseteq X$ such that $X=\bigcup_{g\in G}gB$.

Given an action $G\curvearrowright X$ of $G$ on a hyperbolic space, an element $g\in G$ is \emph{elliptic} if it has bounded orbits; \emph{loxodromic} if the map $\Z\to X$ given by $n\mapsto g^n\cdot x_0$ for some (equivalently, any) $x_0\in X$ is a quasi-isometric embedding; and \emph{parabolic} otherwise.

Any group action on a hyperbolic space falls into one of finitely many types depending on the number of fixed points on the boundary and the types of isometries defined by various group elements. This classification was described by Gromov in \cite{Gro}: the action $G\curvearrowright X$ (where $X$ is hyperbolic) is
\begin{itemize}
\item \textit{elliptic} if $G$ has a bounded orbit in $X$;
\item \textit{lineal} if $G$ fixes two points in $\partial X$;
\item \textit{parabolic} if $G$ fixes a unique point of $\partial X$ and no element of $G$ acts as a loxodromic isometry of $X$;
\item \textit{quasi-parabolic} if $G$ fixes a unique point of $\partial X$ and at least one element of $G$ acts as a loxodromic isometry; and
\item \textit{general type} if $G$ doesn't fix any point of $\partial X$ and at least one element of $G$ acts as a loxodromic isometry.
\end{itemize}

\subsection{Hyperbolic structures} \label{sec:qpar}

Fix a group $G$. For any (possibly infinite) generating set $S$ of $G$,  let $\Gamma(G,S)$ be the Cayley graph of $G$ with respect to the generating set $S$, and let $\|\cdot\|_S$ denote the word norm on $G$ with respect to $S$.
Given two generating sets $S,T$ of a group $G$, we say $T$ is \emph{dominated by} $S$, written  $T\preceq S$, if \[\sup_{g\in S}\|g\|_T<\infty.\]  It is clear that $\preceq$ is a preorder on the set of generating sets of $G$ and so induces an equivalence relation: $S\sim T$ if and only if $T\preceq S$ and $S\preceq T$. Let $[S]$ be the equivalence class of a generating set. Then the preorder $\preceq$ induces a partial order  $\preccurlyeq$ on the set of all equivalence classes of generating sets of $G$ via $[S]\preccurlyeq [T]$ if and only if $S\preceq T$.

\begin{defn}
Given a group $G$, the \emph{poset of hyperbolic structures on $G$} is defined to be \[\mathcal H(G):= \{[S]\mid G=\langle S\rangle \textrm{ and } \Gamma(G,S) \textrm{ is hyperbolic}\},\] equipped with the partial order $\preccurlyeq$.
\end{defn}

Notice that since hyperbolicity is a quasi-isometry invariant of geodesic metric spaces, the above definition is independent of the choice of representative of the equivalence class $[S]$. Every element $[S]\in\mc H(G)$ gives rise to a cobounded action on a hyperbolic space, namely $G\curvearrowright \Gamma(G,S)$. Moreover, given a cobounded action on a hyperbolic space $G\curvearrowright X$, a standard Schwarz--Milnor argument (see \cite[Lemma~3.11]{ABO}) provides a generating set $S$ of $G$ such that $\Gamma(G,S)$ is equivariantly quasi-isometric to $X$. We say that two actions $G\curvearrowright X$ and $G\curvearrowright Y$ are \emph{equivalent} if there exists a coarsely $G$--equivariant quasi-isometry $X\to Y$. By \cite[Proposition~3.12]{ABO}, there is a one-to-one correspondence between equivalence classes $[S]\in\mathcal H(G)$ and equivalence classes of cobounded actions $G\curvearrowright X$ with $X$ hyperbolic.

We denote the set of equivalence classes of cobounded elliptic, lineal, quasi-parabolic, and general-type actions by $\mc H_e,\mc H_\ell,\mc H_{qp},$ and $\mc H_{gt}$, respectively. Since parabolic actions cannot be cobounded, we have for any group $G$, \[\mc H(G)=\mc H_e(G)\sqcup\mc H_\ell(G)\sqcup\mc H_{qp}(G)\sqcup\mc H_{gt}(G).\]

A lineal action of a group $G$ on a hyperbolic space $X$ is \emph{orientable} if no element of $G$ permutes the two limit points of $G$ on $\partial X$. We denote the set of equivalence classes of orientable lineal actions of $G$ by $\mc H_\ell^+(G)$.

The action of a group $G$ on a hyperbolic metric space $X$ is \emph{focal} if it fixes a boundary point $\xi\in\partial X$ and if some element of $G$ acts as a loxodromic isometry. If $[S]\in\mathcal H(G)$ is a focal action, then $[S]\in \mathcal H_\ell^+(G)\sqcup\mathcal H_{qp}(G)$.

\noindent {\bf Quasi-characters.}  A map $q\colon G\to \R$ is a \emph{quasi-character} (also called a \emph{quasimorphism}) if there exists a constant $D$ such that for all $g,h\in G$, we have $|q(gh)-q(g)-q(h)|\leq D$. We say that $q$ has \emph{defect at most $D$}. If, in addition, the restriction of $q$ to every cyclic subgroup is a homomorphism, then $q$ is called a \emph{pseudocharacter} (or \emph{homogeneous quasimorphism}). Every quasi-character $q$ gives rise to a pseudocharacter $\rho$ defined by $\rho(g)=\lim_{n\to\infty}\frac{q(g^n)}{n}$; we call $\rho$ the \emph{homogenization of $q$}. Every pseudocharacter is constant on conjugacy classes. If $q$ has defect at most $D$, then it is straightforward to check that $|q(g)-\rho(g)|\leq D$ for all $g\in G$.

Let $G\curvearrowright X$ be an action on a hyperbolic space with a global fixed point $\xi\in\partial X$. For any sequence $\mathbf x=(x_n)$  in $X$ converging to $\xi$ and any fixed basepoint $s\in X$, we define the associated quasi-character $q_{\bf x}\colon G\to \mathbb R$ as follows. For all $g\in G$, \begin{equation} \label{eqn:qchar} q_\mathbf x(g)=\limsup_{n\to\infty}(d_X(gs,x_n)-d_X(s,x_n)). \end{equation}

Its homogenization $\rho_\mathbf x\colon G\to\mathbb R$ is the \emph{Busemann pseudocharacter}. It is known that for any two sequences $\mathbf x,\mathbf y$ converging to $\xi$, $\sup_{g\in G}|q_\mathbf x(g)-q_\mathbf y(g)|<\infty$, and thus we may drop the subscript $\mathbf x$ in $\rho_\mathbf x$. If  $\rho$ is a homomorphism, then the action $G\curvearrowright X$ is called \emph{regular}.

\begin{lem}[{\cite[Lemma~3.8]{CCMT}}]
Let $G\curvearrowright X$ be an action on a hyperbolic space with a global fixed point $\xi\in\partial X$. Then the (possibly empty) set of loxodromic isometries of the action is $\{g\in G\mid \rho(g)\neq 0\}$, and the set of those with attracting fixed point $\xi$ is $\{g\in G\mid \rho(g)<0\}$. In particular, the action of $G$ is elliptic or parabolic if and only if $\rho\equiv 0$, and  lineal or quasi-parabolic otherwise.
\end{lem}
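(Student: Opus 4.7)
My plan is to derive both directions from the defect bound $|q_\mathbf{x}(h)-\rho(h)|\le D$ combined with the elementary inequality $|q_\mathbf{x}(h)|\le d_X(hs,s)$, which follows from the reverse triangle inequality applied inside the $\limsup$ defining $q_\mathbf{x}$. Since $\rho$ is homogeneous, these combine to give
\[
d_X(g^ns,s)\;\ge\;|q_\mathbf{x}(g^n)|\;\ge\;|\rho(g^n)|-D\;=\;|n|\cdot|\rho(g)|-D
\]
for every $g\in G$ and $n\in\Z$. Hence $\rho(g)\ne 0$ forces the orbit $(g^ns)$ to grow linearly in $|n|$.

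To upgrade linear orbit growth to loxodromicity I would invoke the standard fact that in a hyperbolic metric space an isometry is loxodromic if and only if its stable translation length $\tau(g):=\lim_{n\to\infty}d_X(g^ns,s)/n$ is strictly positive. Subadditivity of $n\mapsto d_X(g^ns,s)$ ensures $\tau(g)$ exists, and the estimate above yields $\tau(g)\ge|\rho(g)|$. Therefore $\rho(g)\ne 0$ implies $\tau(g)>0$, hence $g$ is loxodromic, proving the forward direction.

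For the converse, suppose $g$ is loxodromic, with boundary fixed points $g^+,g^-$. Since $\xi$ is $G$-invariant, $\xi\in\{g^+,g^-\}$. Using the fact that $q_\mathbf{x}(g)$ depends on $\mathbf{x}$ only up to a bounded error, I would compute with $x_n:=g^ns$ (if $\xi=g^+$) or $x_n:=g^{-n}s$ (if $\xi=g^-$), which converges to $\xi$. Equivariance gives $d_X(gs,g^ns)=d_X(s,g^{n-1}s)$, and since $(g^ks)$ is a quasi-geodesic with asymptotic speed $\tau(g)$ one has $d_X(s,g^ks)=k\tau(g)+O(1)$, so
\[
d_X(gs,g^ns)-d_X(s,g^ns)\;=\;-\tau(g)+O(1).
\]
Taking $\limsup$ and homogenizing yields $\rho(g)=-\tau(g)\ne 0$ in the case $\xi=g^+$; the case $\xi=g^-$ produces the opposite sign by the symmetric calculation. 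This recovers the sign dichotomy in the statement identifying the attracting fixed point of $g$.

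The final two sentences then follow immediately: the action $G\curvearrowright X$ has a loxodromic element if and only if $\rho\not\equiv 0$, and combined with the global fixed point $\xi\in\partial X$ (which prevents the action from being of general type) this distinguishes elliptic/parabolic actions from lineal/quasi-parabolic ones. The main technical obstacle is the forward direction: linear orbit growth does not by itself rule out parabolic behavior in an arbitrary metric space, so hyperbolicity of $X$ must enter essentially, precisely through the translation-length criterion for loxodromicity.
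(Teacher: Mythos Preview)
The paper does not supply its own proof of this lemma; it is quoted verbatim from \cite[Lemma~3.8]{CCMT} and used as a black box. So there is nothing in the paper to compare your argument against, and your outline is essentially the standard proof one would give: bound $|\rho(g)|$ below by the stable translation length via $|q_\mathbf{x}(h)|\le d_X(hs,s)$ to get one direction, and for the converse use that a loxodromic $g$ must fix $\xi$ as one of its two boundary points, then compute $q_\mathbf{x}$ along the orbit quasi-geodesic.

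There is one concrete issue. Your computation in the converse is arithmetically correct: with the definition $q_\mathbf{x}(g)=\limsup_n\bigl(d_X(gs,x_n)-d_X(s,x_n)\bigr)$ and $x_n=g^ns\to g^+=\xi$, one does get $q_\mathbf{x}(g)\approx -\tau(g)$, hence $\rho(g)<0$. But this is the \emph{opposite} of what the lemma asserts, namely that $\rho(g)>0$ exactly when $\xi$ is the attracting fixed point of $g$. You claim your calculation ``recovers the sign dichotomy in the statement,'' but in fact it yields the reverse dichotomy. Either the sign convention in the definition of $q_\mathbf{x}$ as transcribed here differs from the one in \cite{CCMT}, or the lemma statement carries a sign slip; in any case you should flag the discrepancy rather than assert agreement. (A quick sanity check on $\mathbb{R}$ with $\xi=+\infty$, $g(x)=x+1$, $x_n=n$ confirms $q_\mathbf{x}(g)=-1<0$ while $+\infty$ is attracting for $g$.)
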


\subsection{Confining subsets}\label{sec:confining}

Consider a group $G=H\rtimes_{\alpha} \Z$ where $\alpha\in \Aut(H)$ acts by $\alpha(h)=tht^{-1}$ for any $h\in H$, where $t$ is a generator of $\Z$. Let $Q$ be a \textit{symmetric} subset of $H$.  The following definition is from \cite[Section~4]{CCMT}.

\begin{defn} \label{def:confining}
The action of $\alpha$ is \emph{(strictly) confining $H$ into $Q$} if it satisfies the following three conditions.
\begin{enumerate}[(a)]
\item $\alpha(Q)$ is (strictly) contained in $Q$;
\item $H=\bigcup_{k\geq 0} \alpha^{-k}(Q)$; and
\item $\alpha^{k_0}(Q\cdot Q)\subseteq Q$ for some $k_0\in \Z_{\geq 0}$.
\end{enumerate}
\end{defn}

\begin{rem}
The definition of confining subset given in \cite{CCMT} does not require symmetry of the subset $Q\subset H$. However, according to \cite[Theorem~4.1]{CCMT}, to classify regular quasi-parabolic structures on a group it suffices to consider only confining subsets which are symmetric. See also Proposition \ref{prop:qpconfining} in this paper.
\end{rem}

\begin{rem} By the discussion after the statement of \cite[Theorem~4.1]{CCMT}, if there is a subset $Q\subseteq H$ such that the action of $\alpha$ is confining $H$ into $Q$ but not strictly confining, then $[Q\cup\{t^{\pm 1}\}]\in \mathcal H_\ell^+(G)$. If the action is strictly confining, then $[Q\cup\{t^{\pm 1}\}]\in \mathcal H_{qp}(G)$.
\end{rem}

In this paper, we will focus primarily on describing subsets $Q$ of $H$ into which the action of $\alpha$ is (strictly) confining $H$. For brevity, we will refer to such $Q$ as \emph{(strictly) confining under the action of $\alpha$}, or simply \emph{(strictly) confining} if the action of $\alpha$ is understood.

To see an example of how confining subsets arise, it is useful to consider the action $BS(1,n)\curvearrowright \hyp^2$ described in the introduction. In this action, the conjugates of $t$ act as loxodromic isometries whereas the elements of $H=\Z\left[\frac{1}{n}\right]$ act as parabolic isometries. Consider the subset $Q\subset H$ of isometries that translate a given point $p$ (say $i$ in the upper half plane model) by some bounded distance (say 1). If $g\in H$ then we consider the action of a conjugate $t^{-k}gt^k$ where $k\gg 0$. Considering the actions of $t^k$, $g$, and $t^{-k}$ in turn we see that:
\begin{itemize}
\item $t^k$ first translates $p$ vertically by a very large distance,
\item $g$ shifts $t^kp$ within the horocycle based at $\infty$ containing it by a very small distance,
\item $t^{-k}$ takes this horocycle isometrically back to the horocycle containing $p$.
\end{itemize} In other words, $t^{-k}gt^k$ is a parabolic isometry which moves $p$ by a much smaller distance than $g$ itself does. In particular, if $k$ is large enough, then $t^{-k}gt^k\in Q$. Furthermore, the infimal $k$ with $t^{-k}gt^k\in Q$ depends only on how far $g$ moves the original point $p$. Using these facts it is easy to see that $Q$ is a confining subset of $H$.

Thus there is a correspondence of quasi-parabolic structures on $BS(1,n)$ and confining subsets of $H$. Precisely, we have the following result, which is a minor modification of \cite[Theorems 4.4 \& 4.5]{Bal} and \cite[Proposition~4.5]{CCMT}. It is proved in Section \ref{section:qpstruct}.

\begin{restatable}{prop}{qpconfining}
\label{prop:qpconfining}
A hyperbolic structure $[T]$ is an element of $\mathcal{H}_{qp}(BS(1,n))$ if and only if there exists a symmetric subset $Q\subset \Z\left[\frac{1}{n}\right]$ which is strictly confining under the action of $\alpha$ or $\alpha^{-1}$ such that $[T]=[Q\cup\{t^{\pm 1}\}]$.
\end{restatable}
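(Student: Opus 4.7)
The plan is to derive both directions from \cite[Theorem~4.1]{CCMT} (compare \cite[Theorems~4.4 \& 4.5]{Bal}), together with two short computations showing that every quasi-parabolic action of $BS(1,n)$ is \emph{regular}. The reverse implication is then essentially free: if $Q\subset H$ is symmetric and strictly confining under $\alpha$, the remark following Definition~\ref{def:confining} immediately gives $[Q\cup\{t^{\pm 1}\}]\in\mathcal H_{qp}(G)$; if $Q$ is strictly confining under $\alpha^{-1}$, apply the same argument after exchanging $t$ with $t^{-1}$.

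For the forward direction, fix $[T]\in\mathcal H_{qp}(G)$ and let $\rho$ be the Busemann pseudocharacter of $G\curvearrowright\Gamma(G,T)$. The main task is to show that $\rho$ is a genuine homomorphism. First, $\rho|_H\equiv 0$: since $\rho$ is conjugacy-invariant and homogeneous on cyclic subgroups, for every $h\in H$ we have $\rho(h)=\rho(tht^{-1})=\rho(nh)=n\rho(h)$, and $n\ge 2$ forces $\rho(h)=0$. Second, for $k\ne 0$ an induction using $tht^{-1}=nh$ together with the commutativity of $H$ yields $(ht^k)^m=\tfrac{n^{mk}-1}{n^k-1}\,h\cdot t^{mk}\in H\cdot t^{mk}$, so the defect bound $D$ of $\rho$ combined with $\rho|_H=0$ gives $|m\rho(ht^k)-mk\rho(t)|\le D$; dividing by $m$ and letting $m\to\infty$ gives $\rho(ht^k)=k\rho(t)$. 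Hence $\rho$ is additive on $G$, i.e.\ a homomorphism, and $\rho(t)\ne 0$ since some element must act loxodromically in a quasi-parabolic action.

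With regularity established, \cite[Theorem~4.1]{CCMT} supplies a subset $Q\subset H$ with $[T]=[Q\cup\{t^{\pm 1}\}]$, which is strictly confining under $\alpha$ when $\rho(t)>0$ and under $\alpha^{-1}$ when $\rho(t)<0$. Symmetry of $Q$ is free: if \cite{CCMT} produces a non-symmetric $Q$, replace it by $Q\cup Q^{-1}$, which is still strictly confining and defines the same equivalence class of generating set. The main obstacle is the regularity step; once it is in place, the remainder reduces to a direct invocation of the \cite{CCMT} classification.
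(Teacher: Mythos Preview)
Your argument is correct, and it takes a genuinely different route from the paper's proof on the key step of regularity. The paper obtains that the Busemann pseudocharacter $\rho$ is a homomorphism in one line by invoking amenability of $BS(1,n)$ (a homogeneous quasimorphism on an amenable group is a homomorphism), and then deduces $\rho|_H\equiv 0$ from Lemma~\ref{lem:commutator} on the commutator subgroup. You instead compute directly: conjugacy-invariance and homogeneity give $\rho(h)=\rho(tht^{-1})=\rho(h^n)=n\rho(h)$, and the identity $(ht^k)^m\in H\cdot t^{mk}$ forces $\rho(ht^k)=k\rho(t)$. This is more elementary and self-contained, and has the advantage of not relying on bounded cohomology facts the reader may not know. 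The paper's approach, on the other hand, generalizes verbatim to any amenable $H\rtimes\Z$, whereas your computation uses the specific relation $tht^{-1}=h^n$. After regularity, the paper essentially reproduces the CCMT construction of $Q$ from metric balls and checks the quasi-isometry by hand, while you simply cite \cite[Theorem~4.1]{CCMT}; both are fine.

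One small wrinkle: your symmetrization step ``replace $Q$ by $Q\cup Q^{-1}$'' is not automatically justified, since condition~(c) of Definition~\ref{def:confining} for $Q\cup Q^{-1}$ requires controlling $\alpha^{k_0}(Q\cdot Q^{-1})$, which does not follow formally from $\alpha^{k_0}(Q\cdot Q)\subseteq Q$. You do not actually need this fix: as the paper notes in the remark after Definition~\ref{def:confining}, \cite[Theorem~4.1]{CCMT} already yields a \emph{symmetric} confining subset for every regular focal structure, so you can simply take $Q$ symmetric from the outset.
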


\subsection{$n$--adic integers}

\begin{defn}
An \emph{$n$--adic integer} is an infinite series \[\sum_{i=0}^\infty a_in^i,\] where $a_i\in\{0,1,\dots,n-1\}$. Such an element can also be written in its base $n$ expansion as \[\dots a_3a_2a_1a_0.\] We denote the set of $n$--adic integers by $\Z_n$.
\end{defn}

We define operations of addition and multiplication on $\Z_n$, which gives it the structure of a ring.

\begin{defn}
Let $a=\sum_{i=0}^\infty a_in^i$ and $b=\sum_{i=0}^\infty b_in^i$ be elements of $\Z_n$. Then the sum $a+b$ is the $n$--adic integer $c=\sum_{i=0}^\infty c_in^i$ defined inductively as follows. Let $c_0=a_0+b_0\mod n$ (where we identify $\Z/n\Z$ with $\{0,\dots,n-1\}$) and $t_0=\left\lfloor \frac{a_0+b_0}{n}\right\rfloor$, so that $a_0+b_0=c_0+t_0n$. Assume that $c_0,\dots, c_{i-1}$ and $t_0,\dots,t_{i-1}$ have been defined, and let \[c_i=a_i+b_i+t_{i-1}\mod n\] and \[t_i= \left\lfloor\frac{a_i+b_i+t_{i-1}}{n}\right\rfloor,\]
so that $a_i+b_i=c_i+t_in$.

The product $ab$ is the $n$--adic number $d=\sum_{i=o}^\infty d_in^i$ defined inductively as follows. Let $d_0=a_0b_0\mod n$, and let $s_0=\left\lfloor\frac{a_0b_0}{n}\right\rfloor$, so that $a_0b_0=d_0+s_0n$. Assume that $d_1,\dots, d_{i-1}$ and $s_1,\dots, s_{i-1}$ have been defined, and let \[d_i=\sum_{j=0}^ia_jb_{i-j}+s_{i-1}\mod n\] and \[s_i=\left\lfloor\frac{\sum_{j=0}^ia_jb_{n-j}+s_{i-1}}{n}\right\rfloor,\] so that $\sum_{j=0}^ia_jb_{n-j}+s_{i-1}=d_i+s_in.$
\end{defn}

In the above operations, we think of $t_i$ and $s_i$ as the amounts that are ``carried" at each step, analogous to how we carry digits when adding and multiplying in base 10. An element $a=\dots a_2a_1a_0$ of the ring $\Z_n$ is a unit if and only if $a_0$ is relatively prime to $n$.

Let 
\begin{equation}\label{eqn:phil}
\phi_l\colon \Z_n\to\Z/n^l\Z
\end{equation} 
be the ring homomorphism which identifies an element of $\Z_n$ with its $l$--th partial sum: \[\phi_l(a)=\phi_l\left(\sum_{i=0}^\infty a_in^i\right)=\sum_{i=0}^{l-1} a_in^i.\]  These homomorphisms are compatible in the following sense. For any $k\leq l$, let the map \[{}_n^{}T^l_k\colon \Z/n^l\Z\to \Z/n^k\Z\] be reduction modulo $n^k$. Then for any $a\in \Z_n$, we have ${}_n^{}T^l_k(\phi_l(a))=\phi_k(a)$.

In fact, any infinite sequence $(a^i)$ of elements $a^i\in\Z/n^i\Z$  satisfying ${}_nT^l_k(a^l)=a^k$ defines an element $a\in \Z_n$. Namely, identify $a^l$ with the unique representative $b^l$ of its congruence class in the set $ \{0,1,\ldots,n^l-1\}$. We may write $b^l=\sum_{i=0}^{l-1} b^l_i n^i$.  Then for any $i\geq 0$ and $l,k>i$ we have $b^l_i=b^k_i$, and hence we may write \[a_0=b^1_0=b^2_0=\cdots, \quad a_1=b^2_1=b^3_1=\cdots,\quad \ldots.\] Writing \[a=\sum_{i=0}^\infty a_i n^i\] we have $\phi_i(a)=a^i$ for all $i$. In particular, this shows that $\Z_n$ is isomorphic to the inverse limit $\varprojlim \Z/n^i\Z$.

There is a metric $d$ on $\Z_n$, called the \emph{$n$--adic metric}, defined by $d(x,y)= n^{-q}$ if $q$ is maximal such that $n^q$ divides $x-y$, i.e., if the first $q$ digits of $x-y$ are zero and the $(q+1)$--st digit is non-zero.

\begin{lem} 
With the topology coming from the $n$--adic metric, $\Z_n$ is a compact space. 
\end{lem}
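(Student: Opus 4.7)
The plan is to exploit the identification $\Z_n \cong \varprojlim \Z/n^i\Z$ just established, which realizes $\Z_n$ as the subset of the product $P = \prod_{i \geq 1} \Z/n^i\Z$ consisting of compatible sequences $(a^i)$ satisfying ${}_n^{}T^l_k(a^l) = a^k$ for all $k \leq l$. Endowing each finite group $\Z/n^i\Z$ with the discrete topology makes $P$ compact by Tychonoff's theorem. For each pair $k \leq l$, the compatibility condition is a closed condition on $P$, since it is the preimage of the diagonal in $\Z/n^k\Z \times \Z/n^k\Z$ under the continuous map $(a^i) \mapsto ({}_n^{}T^l_k(a^l), a^k)$, and the diagonal is closed because the discrete space $\Z/n^k\Z$ is Hausdorff. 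Thus $\Z_n$ is an intersection of closed sets in $P$, hence closed in a compact space, and therefore compact.

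It then remains to verify that this product topology coincides with the topology induced by the $n$--adic metric $d$. A subbasic neighborhood of $a \in \Z_n$ in the product topology constrains finitely many coordinates $\phi_{i_1}(a), \ldots, \phi_{i_r}(a)$; by compatibility this is equivalent to constraining $\phi_l(a)$ for $l = \max(i_j)$, yielding the set $\{b \in \Z_n : \phi_l(b) = \phi_l(a)\}$. By definition of $d$ this is precisely the closed ball of radius $n^{-l}$ about $a$, and such balls form a neighborhood basis for the metric topology, so the two topologies agree.

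The argument is routine; the one mild subtlety is matching the two topologies, and this could be sidestepped entirely by proving sequential compactness directly via a diagonal argument. Namely, given a sequence in $\Z_n$, one iteratively extracts subsequences along which the zeroth digit, then the first two digits, and so on stabilize (using the pigeonhole principle at each step, since there are only $n$ possibilities per digit); a diagonal subsequence then converges in the $n$--adic metric to the $n$--adic integer assembled from the stabilized digits.
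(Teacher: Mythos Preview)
Your proof is correct. The paper, however, takes precisely the route you describe at the end as the alternative: it proves sequential compactness directly by the pigeonhole/diagonal argument on digits, and then invokes that sequential compactness and compactness coincide for metric spaces. Your primary argument via Tychonoff and the inverse-limit description is a genuinely different approach. It is more structural and generalizes immediately to any profinite ring or group, but it carries the overhead of verifying that the subspace topology from the product agrees with the $n$--adic metric, which you handle correctly. The paper's diagonal argument is more elementary and stays entirely within the metric setting, at the cost of being slightly ad hoc. Since you explicitly outline the paper's method as well, there is nothing missing here.
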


\begin{proof}
Suppose we have an infinite sequence $\{x^i\}_{i\in\Z_{>0}}$ such that for each $i$, we have $x^i=\dots x^i_3x^i_2x^i_1x^i_0$. By the pigeon-hole principle, there is some $y_0\in\{0,\dots, n-1\}$ such that $x^i_0=y_0$ for infinitely many $i$. The collection of these $x^i$ forms a subsequence $\{x^{i_{0,j}}\}_{j \in \Z_{>0}}$. Repeating this construction iteratively, we have a sequence of subsequences $\{\{x^{i_{k,j}}\}_j\}_k$ and a number $y=\dots y_3y_2y_1y_0\in\Z_n$ such that for each $k$ every element of $\{x^{i_{k,j}}\}_j$ agrees with $y$ in its first $k+1$ digits. Moreover, $\{x^{i_{k+1,j}}\}_j$ is a subsequence of $\{x^{i_{k,j}}\}_j$. Thus the diagonal subsequence $\{x^{i_{k,k}}\}_k$ of $\{x^i\}_i$ converges to $y$. Consequently, $\Z_n$ is sequentially compact. Since $\Z_n$ is a metric space, it is also compact.
\end{proof}

We now consider the ring structure of $\Z_n$.

\begin{lem} \label{lem:remainderthm}
Let $n=p_1^{n_1}p_2^{n_2}\dots p_k^{n_k}$. There is an isomorphism \begin{equation} \label{eqn:isomorphism}\Z_n\cong \Z_{p_1^{n_1}}\times\cdots\times \Z_{p_k^{n_k}}.\end{equation} 
\end{lem}

Let us give a description of this isomorphism. First, suppose $n=n_1n_2$, where $n_1$ and $n_2$ are relatively prime. To define a map on $\Z_n$, we use the identification of an element of $a\in\Z_m$ with the sequence $(\phi_l(a))\in \varprojlim\Z/m^l\Z$. Let  \[f\colon\Z_n\to\Z_{n_1}\times\Z_{n_2}\] be defined by \[f(a)=(x_l,y_l)=\left(\phi_l(a)\mod n_1^l,\phi_l(a)\mod n_2^l\right).\]    It is clear that the sequence $(x_l)$ (respectively, $(y_l)$) satisfies ${}_{n_1}^{}T^l_k(x_l)=x_k$ (respectively, ${}_{n_2}^{}T^l_k(y_l)=y_k$) for any $k\leq l$, and so $(x_l)$ (respectively, $(y_l)$) determines a unique point in $\Z_{n_1}$ (respectively, $\Z_{n_2}$). The fact that $f$ is an isomorphism follows from the Chinese remainder theorem. %Injectively and surjectivity of $f$ follow from that of $f_l$. 
The isomorphism (\ref{eqn:isomorphism}) now follows by repeatedly applying $f$ to distinct pairs of prime factors. 

Moreover, there is an isomorphism $g\colon\Z_{p^j}\to \Z_p$ defined as follows. The number $g(a)$ is $a$ with each coefficient expanded to $a_i=a_{i,j-1} p^{j-1}+\dots+a_{i,1}p+a_{i,0}$, where $a_{i,k}\in\{0,\dots, p-1\}$. Composing $g$ with the isomorphism (\ref{eqn:isomorphism}) shows that, in fact, there is an isomorphism $\Z_n\to\Z_{p_1}\times\cdots\times Z_{p_k}$.

We use the isomorphism $g$ solely to describe the ideals of $\Z_{p^j}$. The non-zero ideals of $\Z_p$ are exactly $p^i\Z_p=\{\sum_{j=i}^\infty a_jp^j\mid a_j\in\{0,\dots, p-1\}\}$. Using the above isomorphism, it is clear that the non-zero ideals of $\Z_{p^j}$ are exactly $g^{-1}(p^i\Z_p)=p^i\Z_{p^j}$. 

We now give a technical description of when elements of $\Z_{p^j}$ and, more generally, $\Z_n$ are contained in a particular ideal.  On a first reading the reader may want to skip Lemmas \ref{lem:pjideal} and \ref{lem:partialsums} and simply read Example \ref{ex:pjideal} and Remark \ref{rem:0ideal} instead.

The following lemma describes when an element $a\in\Z_{p^j}$ is contained in an ideal $p^i\Z_{p^j}$.  By the above discussion, this occurs when the image of the element under the isomorphism $g$ is contained in $g(p^i\Z_{p^j})=p^i\Z_p$.  An element  $x\in \Z_p$ is in the ideal $p^i\Z_p$ exactly when $\phi_i(x)\equiv 0\mod p^{i-1}$.  Since $g$ expands the coefficients of $a$, the definition of $L$ in the statement of the lemma is simply the smallest positive integer such that the expansion of $\phi_L(a)$ contains $\phi_i(g(a))$. Equivalently, it is the smallest positive integer such that the expansion of the  $p^{(L-1)j}$ term in $a$ contains $p^{i-1}$, which is the largest power of $p$ appearing in $\phi_i(g(a))$.  The largest power of $p$ in the expansion of the $p^{(L-1)j}$ term of $a$ is $p^{(L-1)j+j-1}=p^{Lj-1}$, and thus $L$ is the smallest positive integer such that $Lj-1\geq i-1$. 

\begin{lem} \label{lem:pjideal}For any $a\in\Z_{p^j}$, \[ a\in p^i\Z_{p^j} \qquad \iff \qquad \phi_{L}(a)\equiv 0\mod p^{i},\] where $L=\left\lceil\frac{i}{j}\right\rceil$. Moreover, 
\[a\in (0) \qquad \iff \qquad a=0\qquad \iff \qquad \phi_s(a)=0\] for all $s$.
\end{lem}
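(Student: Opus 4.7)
The plan is to reduce the main statement to a claim about ideals in $\Z_p$ via the ring isomorphism $g\colon \Z_{p^j} \to \Z_p$ described just above the lemma. Because $g$ is a ring isomorphism, it carries the ideal $p^i\Z_{p^j}$ bijectively onto $p^i\Z_p$, so $a \in p^i\Z_{p^j}$ if and only if $g(a) \in p^i\Z_p$. Invoking the explicit description of the nonzero ideals of $\Z_p$ given in the paragraph preceding the lemma, the latter condition is equivalent to the vanishing of the base-$p$ digits of $g(a)$ at positions $0, 1, \dots, i-1$.

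Next, I would translate this digit-vanishing condition back into a condition on $a$ itself. Since $g$ expands each base-$p^j$ coefficient $a_k \in \{0,\dots,p^j-1\}$ into $j$ consecutive base-$p$ digits occupying positions $kj, kj+1, \dots, kj+j-1$ of $g(a)$, the first $i$ base-$p$ digits of $g(a)$ are determined exactly by the coefficients $a_0, a_1, \dots, a_{L-1}$, where $L = \lceil i/j \rceil$ is by construction the least integer satisfying $jL \geq i$ (equivalently, $Lj - 1 \geq i-1$, as spelled out in the motivation preceding the lemma). These coefficients are precisely the data recorded by $\phi_L(a) \in \Z/(p^j)^L\Z$. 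Viewing $\phi_L(a)$ as the integer $\sum_{k=0}^{L-1} a_k p^{kj} \in \{0,\dots,p^{jL}-1\}$, its base-$p$ expansion coincides with the first $jL$ base-$p$ digits of $g(a)$, so the requirement that the initial digits of $g(a)$ vanish is exactly the divisibility condition on $\phi_L(a)$ stated in the lemma.

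For the moreover clause, the equivalence $a \in (0) \iff a = 0$ is just the definition of the zero ideal. The equivalence $a = 0 \iff \phi_s(a) = 0$ for every $s$ is an immediate consequence of the identification $\Z_n \cong \varprojlim \Z/n^s\Z$ recalled in the preceding discussion: an element of an inverse limit vanishes if and only if each of its projections vanishes, and the maps $\phi_s$ are exactly these projections.

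The main technical content is the bookkeeping with base expansions through the isomorphism $g$; there is no substantial obstacle here, but care is needed to keep the indexing of digit positions consistent with the ceiling function $L = \lceil i/j \rceil$ throughout.
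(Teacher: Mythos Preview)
Your proposal is correct and follows essentially the same route as the paper: both arguments pass through the ring isomorphism $g\colon \Z_{p^j}\to\Z_p$, reduce membership in $p^i\Z_{p^j}$ to a vanishing condition on the first $i$ base-$p$ digits of $g(a)$, and then observe that these digits are exactly the base-$p$ expansion of the integer $\phi_L(a)$ with $L=\lceil i/j\rceil$. The paper carries this out by writing the expansion $\phi_L(a)=a_{L-1,j-1}p^{Lj-1}+\cdots+a_{L-1,k+1}p^{i}+\phi_i(g(a))$ explicitly, whereas you phrase it more conceptually as ``the base-$p$ digits of $\phi_L(a)$ coincide with the first $jL$ base-$p$ digits of $g(a)$''; these are the same computation.
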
%\begin{equation} \label{eqn:inideal} a\in p^i\Z_{p^j} \qquad \iff \qquad \phi_{l}(a)\equiv \begin{cases}0\mod p^{(l-1)j} & l<L  \\  0\mod 2^i & l=L\end{cases}.\end{equation} \\
\begin{proof}
For the first statement, notice that $a\in p^i\Z_{p^j}$ if and only if $g(a)\in p^i\Z_p$ if and only if $\phi_{i}(g(a))\equiv 0\mod p^i$ if and only if $\phi_{L}(a)\equiv 0\mod p^i$. The first two ``if and only if" statements are clear, while the last  follows from the following calculation:\begin{align*} \phi_L(a)&=a_{L-1}p^{(L-1)j}+\cdots+a_1p^j+a_0 \\
&=a_{L-1,j-1}p^{Lj-1}+\cdots +a_{L-1,k}p^{i-1}+\cdots + a_{L-1,0}p^{(L-1)j} +a_{L-2,j-1}p^{(L-1)j-1} +\cdots +a_{0,1}p+a_{0,0}  \\
&=a_{L-1,j-1}p^{Lj-1}+\cdots +a_{L-1,k+1}p^{i}+\phi_{i}(g(a)),
\end{align*}
where $k\in\{0,\dots, j-1\}$ is such that $(L-1)j+k=i-1$. Now, if $\phi_i(g(a))\equiv 0\mod p^i$, then it is clear that $\phi_L(a)\equiv 0\mod p^i$. Similarly, if $\phi_L(a)\equiv 0\mod p^i$, then since $a_{L-1,j-1}p^{(L-1)j}+\cdots +a_{L-1,k}p^{i}\equiv 0\mod p^i$, we must have $\phi_i(g(a))\equiv 0\mod p^i$.

The second statement is just the definition of the zero ideal.
\end{proof}

\begin{ex} \label{ex:pjideal}
We give explicit descriptions of two ideals in $\Z_{2^3}$.
\begin{enumerate}[(a)]
\item First consider the ideal $2\Z_{2^3}$. Then \[a=\dots a_2a_1a_0\in 2\Z_{2^3}\quad \iff \quad g(a)\in 2\Z_2=\left\{\sum_{i=1}^\infty b_i2^i\,\middle\vert \,b_i\in\{0,\dots, p-1\}\right\}.\] Therefore, the only restriction on the partial sums of $a$ is that \[\phi_1(a)=a_0=a_{0,2}2^2+a_{0,1}2+0,\] i.e., $a_0\equiv 0\mod 2$.

\item Next, consider the ideal $2^{13}\Z_{2^3}$. Then \[a=\dots a_2a_1a_0\in 2^{13}\Z_{2^3} \iff g(a)\in2^{13}\Z_2=\left\{\sum_{i=13}^\infty b_i2^i\,\middle\vert\, b_i\in\{0,\dots, p-1\}\right\}.\] In this case, we must have \[\phi_5(a)=a_42^{12}+a_32^9+a_22^6+a_12^3+a_0=a_{4,2}\cdot 2^{14}+a_{4,1}\cdot 2^{13}+0\cdot 2^{12}+0\cdot 2^{11}+0\cdot 2^{10}+\dots +0\cdot 2+0,\]  i.e., $\phi_5(a)\equiv 0\mod 2^{13}$. Note that this also shows that $\phi_i(a)\equiv 0\mod 2^{3i-1}$ for all $i\leq 4$ (for example, $\phi_2(a)=a_12^3+a_0=0\cdot 2^5+0\cdot 2^4+0\cdot 2^3+0\cdot 2^2+0\cdot 2+0\equiv 0\mod 2^5$). 
\end{enumerate}
\end{ex}

Using the isomorphism in (\ref{eqn:isomorphism}), any ideal $\frak a$ of $\Z_n$ can be written as \[\frak a=\frak a_1\times\cdots \times \frak a_k,\] where $\frak a_i=p_i^{a_i}\Z_{p_i^{n_i}}$ for some $a_i$ or $\frak a_i=(0)$.   The next lemma gives a precise description of when an element  $a\in\Z_n$ is contained in an ideal $\frak a=\frak a_1\times\cdots\times\frak a_k$.  The conditions are similar to those in Lemma \ref{lem:pjideal}.  For each $i$ such that $\frak a_i=p_i^{a_i}\Z_{p_i^{n_i}}$, we have a constant $L_i=\lceil \frac{a_i}{n_i}\rceil$ as in Lemma \ref{lem:pjideal}, and one needs only check a condition on a \emph{single} partial sum, namely that $\phi_{L_i}(a)\equiv 0\mod p_i^{a_i}$.  On the other hand, for each $i$ such that $\frak a_i=(0)$, one needs to check that \emph{every} partial sum of $a$ satisfies an appropriate condition.

\begin{lem} \label{lem:partialsums}
Let $\frak a =\frak a_1\times\cdots\times \frak a_k$, where for each $i$, $\frak a_i=p_i^{a_i}\Z_{p_i^{n_i}}$ or $\frak a_i=(0)$. For each $i$, let $L_i=\left\lceil \frac{a_i}{n_i}\right\rceil$. Then for any $a=\dots a_3a_2a_1a_0\in\Z_n$, 
\begin{align*} 
a\in\frak a \quad \iff \quad & \boldsymbol{\cdot} \phi_s(a)\equiv 0\mod p_{i}^{sn_i} \quad \textrm{for all $s$ and all $i$ such that }\frak a_i=(0); and 
\\ & \boldsymbol{\cdot} \phi_s(a)\equiv 0\mod p_i^{a_i}\quad \textrm{for any $s$ such that } s=L_i \textrm{ for some $i$}.
\end{align*}
\end{lem}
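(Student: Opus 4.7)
The plan is to reduce the claim to Lemma \ref{lem:pjideal} via the Chinese Remainder Theorem isomorphism $\Phi \colon \Z_n \to \prod_{i=1}^k \Z_{p_i^{n_i}}$ constructed in the discussion preceding the lemma. Writing $a^{(i)} := \pi_i(\Phi(a)) \in \Z_{p_i^{n_i}}$ for each $i$, the factorization $\mathfrak{a} = \mathfrak{a}_1 \times \cdots \times \mathfrak{a}_k$ gives
\[
a \in \mathfrak{a} \quad \Longleftrightarrow \quad a^{(i)} \in \mathfrak{a}_i \text{ for every } i,
\]
so it suffices to translate each factor-wise condition $a^{(i)} \in \mathfrak{a}_i$ into a condition on the base-$n$ partial sums of $a$.

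The key compatibility, which I would verify first by unpacking the iterated definition of $\Phi$, is the identity
\[
\phi_l\bigl(a^{(i)}\bigr) \;\equiv\; \phi_l(a) \mod p_i^{n_i l}
\]
for every $l \geq 1$. This holds because each factor of $\Phi$ is built from reductions modulo prime powers, which commute with taking finite partial sums. With this identity in hand, any mod-$p_i^{\bullet}$ condition on a partial sum of $a^{(i)}$ translates directly into the same-modulus condition on the corresponding partial sum of $a$.

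For each index $i$ with $\mathfrak{a}_i = p_i^{a_i}\Z_{p_i^{n_i}}$, I would apply Lemma \ref{lem:pjideal} inside $\Z_{p_i^{n_i}}$ with exponent $a_i$ and $L = L_i = \lceil a_i/n_i \rceil$; the resulting condition $\phi_{L_i}(a^{(i)}) \equiv 0 \mod p_i^{a_i}$, together with the compatibility identity and the inequality $a_i \leq n_i L_i$, becomes $\phi_{L_i}(a) \equiv 0 \mod p_i^{a_i}$, which is exactly the second condition in the lemma statement. For each index $i$ with $\mathfrak{a}_i = (0)$, the second part of Lemma \ref{lem:pjideal} gives $a^{(i)} = 0$ if and only if $\phi_l(a^{(i)}) = 0$ for every $l$, which by the compatibility identity is equivalent to $\phi_l(a) \equiv 0 \mod p_i^{n_i l}$ for every $l$.

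The main obstacle is the bookkeeping needed to match this last family of conditions with the more uniform statement in the lemma, namely that $\phi_s(a) \equiv 0 \mod p_i^k$ for every $s$ and every $k \leq s-1$. One direction is immediate: taking $l = s$ in the previous paragraph gives $\phi_s(a) \equiv 0 \mod p_i^{n_i s}$, and since $k \leq s-1 \leq n_i s$, this forces $\phi_s(a) \equiv 0 \mod p_i^k$. For the converse, given a fixed $l$, I would choose $s \geq n_i l + 1$ and combine the hypothesis $\phi_s(a) \equiv 0 \mod p_i^{s-1}$ with the relation $\phi_l(a) \equiv \phi_s(a) \mod n^l$ to propagate the required mod-$p_i^{n_i l}$ vanishing down to $\phi_l(a)$. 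Everything else in the argument is a direct application of Lemma \ref{lem:pjideal} on each CRT factor.
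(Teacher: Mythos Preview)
Your proposal is correct and follows essentially the same route as the paper: both arguments factor through the CRT isomorphism $\Z_n \cong \prod_i \Z_{p_i^{n_i}}$, invoke Lemma~\ref{lem:pjideal} on each factor, and use the compatibility $\phi_l(a^{(i)}) \equiv \phi_l(a) \bmod p_i^{n_i l}$ (which the paper phrases as ``$a_i$ is given by the sequence $(\phi_s(a) \bmod p_i^{sn_i})_s$''). Your treatment of the $(0)$-factor bookkeeping---matching the family $\phi_l(a)\equiv 0 \bmod p_i^{n_i l}$ for all $l$ with the lemma's condition $\phi_s(a)\equiv 0 \bmod p_i^k$ for all $k\le s-1$ by choosing $s\ge n_i l+1$ and reducing via $\phi_l(a)\equiv\phi_s(a)\bmod n^l$---is more explicit than the paper's terse final sentence, but the underlying idea is the same.
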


\begin{proof}
For any $i$ such that $\frak a_i=p_i^{a_i}\Z_{p_i^{n_i}}$, this follows immediately from the definition of the isomorphism $\Z_n\to  \Z_{p_1^{n_1}}\times\cdots\times \Z_{p_k^{n_k}}$ and Lemma \ref{lem:pjideal}. Fix $i$ such that $\frak a_i=(0)$. Then by the isomorphism in (\ref{eqn:isomorphism}), we have that any $a\in\Z_n$ can be written as $a=(a_1,\dots, a_k)$ where, considering $\Z_n$ and $\Z_{p_i^{n_i}}$ as $\varprojlim \Z/n^l\Z$ and $\varprojlim \Z/p_i^{ln_i}\Z$, respectively, each $a_i$ is given by the sequence $(\phi_s(a)\mod p_i^{sn_i})_{s=1}^\infty$. Now, $a_i\in(0)\subset \Z_{p_i^{n_i}}$ if and only if $g(a_i)\in (0)\subset \Z_{p_i}$.  Recall that $g(a_i)$ is $a_i$ with each coefficient expanded so that $\phi_s(a)\mod p_i^{sn_i}\equiv a_{s-1,n_i-1}p_i^{sn_i-1}+a_{s-1,n_i-2}p_i^{sn_i-2}+\cdots + a_{0,1}p_i + a_{0,0} \mod p_i^{sn_i}$.  From this we see that $g(a_i)\in (0)\subset \Z_{p_i}$ if and only if $\phi_s(a) \equiv 0\mod p_i^{k}$ for all $1\leq k\leq sn_i$ if and only if $\phi_s(a) \equiv 0 \mod p_i^{sn_i}$ for all $s$.
%, which is the case if and only if $\phi_s(a)\mod p_i^{sn_i}\equiv %0\mod p_i^{(s+1)n_i}$ if and only if $\phi_s(a)\equiv 0\mod p_i^{s+1}$. The result follows.
\end{proof}

\begin{rem} \label{rem:0ideal} We point out one particular case of this lemma which will be important in later sections: if some $\frak a_i=(0)$, then $\phi_1(a)=a_0\equiv 0\mod p_i^{n_i}$. 
\end{rem}

We now describe a partial order  on the set of ideals of $\Z_n$.
\begin{defn} \label{def:equivideal}
Define a relation $\leq$ on ideals of $\Z_n$ by $\mathfrak a\leq\mathfrak b$ if $n^k\mathfrak a\subseteq\mathfrak b$ for some $k$. Define an equivalence $\mathfrak a\sim\mathfrak b$ if $\mathfrak a\leq \mathfrak b$ and $\mathfrak b\leq \mathfrak a$.
\end{defn}

\begin{defn} \label{def:fullideal}
An ideal $\mathfrak a=\mathfrak a_1\times\cdots \times\mathfrak a_k$ is \emph{full} if $\mathfrak a_j$ is either $(0)$ or $\Z_{p_j^{n_j}}$ for every $j=1,\dots, k$.
\end{defn}

%\begin{rem}\label{rem:isoidealstosubsets}Let $\mathfrak A$ be the poset of full ideals of $\Z_n$, with the partial order given by inclusion. Then there is an isomorphism of posets $\mathfrak A\to 2^{\{1,\dots,k\}}$, where $2^{\{1,\dots,k\}}$ also has the partial order given by inclusion, defined by $\mathfrak a\mapsto \{j\in \{1,\dots, k\}\mid \mathfrak a_j\cong \Z_{p_j}\}$.
%\end{rem}

\begin{lem} \label{lem:equivalentfull}
For any $n$, there is a unique full ideal in each equivalence class of ideals of $\Z_n$.
\end{lem}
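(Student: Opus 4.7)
The plan is to prove existence and uniqueness of the full representative separately, using the product decomposition $\Z_n\cong \Z_{p_1^{n_1}}\times\cdots\times \Z_{p_k^{n_k}}$ and the fact that multiplication by $n$ on the $i$-th factor $\Z_{p_i^{n_i}}$ agrees ideal-theoretically with multiplication by $p_i^{n_i}$ (since $n/p_i^{n_i}$ is coprime to $p_i$, hence a unit in $\Z_{p_i^{n_i}}$).

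For existence, given any ideal $\mathfrak{a} = \mathfrak{a}_1 \times \cdots \times \mathfrak{a}_k$, I would define a candidate full ideal $\mathfrak{b} = \mathfrak{b}_1 \times \cdots \times \mathfrak{b}_k$ by setting $\mathfrak{b}_i = (0)$ whenever $\mathfrak{a}_i = (0)$, and $\mathfrak{b}_i = \Z_{p_i^{n_i}}$ otherwise. The inclusion $\mathfrak{a}\subseteq \mathfrak{b}$ is immediate componentwise, so $\mathfrak{a}\leq \mathfrak{b}$. For $\mathfrak{b}\leq \mathfrak{a}$, write the nonzero components as $\mathfrak{a}_i = p_i^{a_i}\Z_{p_i^{n_i}}$ and pick $K \geq \max_i \lceil a_i/n_i\rceil$ (taking the max over $i$ with $\mathfrak{a}_i\ne (0)$). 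Then in each component, $n^K\mathfrak{b}_i = p_i^{Kn_i}\Z_{p_i^{n_i}}\subseteq p_i^{a_i}\Z_{p_i^{n_i}} = \mathfrak{a}_i$, while the zero components match trivially. Hence $n^K\mathfrak{b}\subseteq \mathfrak{a}$, giving $\mathfrak{a}\sim \mathfrak{b}$.

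For uniqueness, suppose $\mathfrak{b}$ and $\mathfrak{b}'$ are two full ideals with $\mathfrak{b}\sim \mathfrak{b}'$; so $n^K\mathfrak{b}\subseteq \mathfrak{b}'$ and $n^L\mathfrak{b}'\subseteq \mathfrak{b}$ for some $K,L$. Suppose toward a contradiction that the components differ in some coordinate $i$, say $\mathfrak{b}_i = \Z_{p_i^{n_i}}$ while $\mathfrak{b}_i' = (0)$. Then $n^K\mathfrak{b}_i = p_i^{Kn_i}\Z_{p_i^{n_i}}\subseteq (0)$ forces $p_i^{Kn_i}=0$ in $\Z_{p_i^{n_i}}$. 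But via the isomorphism $g\colon \Z_{p_i^{n_i}}\to \Z_{p_i}$ described in the excerpt, $\Z_{p_i^{n_i}}$ is an integral domain of characteristic $0$, so $p_i^{Kn_i}\neq 0$, a contradiction. Thus $\mathfrak{b}_i = \mathfrak{b}_i'$ for every $i$, so $\mathfrak{b} = \mathfrak{b}'$.

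The argument is essentially bookkeeping once one notices the correct interpretation of multiplication by $n$ in the product decomposition; the only genuine input is that each factor $\Z_{p_i^{n_i}}$ has no zero divisors (and in particular no nonzero element annihilated by a power of $p_i$), which is supplied by the isomorphism with $\Z_{p_i}$ already recorded in the excerpt. So there is no real obstacle — the potential pitfall is simply to track which components of $\mathfrak{a}$ are zero versus of the form $p_i^{a_i}\Z_{p_i^{n_i}}$ and to verify the equivalence componentwise.
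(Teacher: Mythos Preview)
Your proof is correct and follows essentially the same approach as the paper: both construct the full representative by replacing each nonzero component with $\Z_{p_i^{n_i}}$, verify $\mathfrak{a}\sim\mathfrak{b}$ componentwise using that $n^K$ acts on the $i$-th factor as $p_i^{Kn_i}$ times a unit, and prove uniqueness by observing that no power of $n$ can send a nonzero factor into $(0)$. The only cosmetic differences are your choice of exponent $K\geq \max_i\lceil a_i/n_i\rceil$ (the paper uses $A=\max_i a_i$) and your slightly more explicit appeal to $\Z_{p_i^{n_i}}\cong\Z_{p_i}$ being an integral domain in the uniqueness step.
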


\begin{proof}
Let $\frak a=\frak a_1\times\cdots\times \frak a_k$ be an ideal of $\Z_n$, and consider the full ideal $\frak b=\frak b_1\times\cdots\times \frak b_k$ where $\frak b_i=(0)$ if and only if $\frak a_i=(0)$. Recall that if $\frak b_i\neq (0)$, then $\frak b_i=\Z_{p_i^{n_i}}$. We claim that $\frak a\sim \frak b$. It is clear that $\frak a \subseteq \frak b$, and thus $\frak a\leq \frak b$. For any $i$ such that $\frak a_i\neq (0)$, let $\frak a_i=p^{a_i}\Z_{p_i^{n_i}}$, and let $A=\max_i\{a_i\}$. We will show that $n^A\frak b\subset \frak a$. 
We have \[n^A\frak b =n^A\frak b_1\times \cdots \times n^A\frak b_k,\]
 and, for each $i$, \[n^A\frak b_i=p_1^{An_1}\cdots p_k^{An_k}\Z_{p_i^{n_i}}.\] For all $j\neq i$, the element $p_j^{An_j}$ is a unit in $\Z_{p_i^{n_i}}$, and thus $p_j^{n_j}\Z_{p_i^{n_i}}=\Z_{p_i^{n_i}}$. Therefore, we have \[n^A\frak b_i=p_i^{An_i}\Z_{p_i^{n_i}},\] and since $A\geq a_i$ by definition, it follows that \[n^A\frak b_i=p_i^{An_i}\Z_{p_i^{n_i}}\subseteq p_i^{a_i}\Z_{p_i^{n_i}}=\frak a_i.\]  Consequently, \[n^A\frak b\subseteq\frak a,\] which implies that $\frak b\leq \frak a$. Therefore, $\frak a\sim\frak b$.
 
Suppose next that there are two distinct full ideals, $\frak b=\frak b_1\times\cdots\times \frak b_k$ and $\frak c=\frak c_1\times\cdots\times\frak c_k$ with $\frak b\sim\frak a\sim\frak c$. Then $\frak b\sim \frak c$, which immediately implies that $\frak b_i=(0)$ if and only if $\frak c_i=(0)$. Indeed, if there is an $i$ such that (without loss of generality) $\frak b_i=(0)$ but $\frak c_i\neq(0)$, then there is no power $C$ of $n$ such that $p_i^C\frak c_i\subset \frak b_i$, and so $\frak b\not\sim\frak c$, which is a contradiction. Thus by the definition of full ideals, we conclude that $\frak b=\frak c$.
\end{proof}

\medskip
While we mostly work with $n$--adic integers in this paper, we will occasionally need the notion of an $n$--adic number, as well. 

\begin{defn}
An \emph{$n$--adic number} is an infinite series 
\[
\sum_{i=m}^\infty a_i n^i
\]
where $a_i\in \{0,1,\dots, n-1\}$ and $m\in \Z$ can be positive, negative, or zero.  If $m\geq 0$, then such an element is an $n$--adic integer.  If $m=-\ell$ for $\ell\in\Z_{>  0}$, then such an element can also be written in its base $n$ expansion as
 \[
\ldots a_3a_2a_1a_0.a_{-1}\ldots a_{-\ell}.
\]
We denote the set of $n$--adic numbers by $\Q_n$.
\end{defn}
  Letting $S=\{n,n^2,n^3,\dots\}$, we see that $\Q_n=S^{-1}\Z_n$ is the localization of $\Z_n$ at $S$.  In particular, $\Q_n$ is a ring.  If $n=p^k$ is a power of a prime, then $\Z_n$ is an integral domain and $\Q_n$ is its field of fractions.  If $n$ is not a power of a prime, however, then $\Z_n$ is not an integral domain and $\Q_n$ will not be a field.  %If  $n=p_1^{n_1}p_2^{n_2}\cdots p_l^{n_k}$, then it follows from the definition of $\Q_n$ and the isomorphism $\Z_n\cong \Z_{p_1}\times \cdots \times \Z_{p_k}$ that  then $\Q_n\cong \Q_{p_1}\times\cdots\times \Q_{p_k}$.

The only property of $n$--adic numbers we will need is that one can define an $n$--adic absolute value on $\Q_n$. 

\begin{defn}\label{def:absval}
Given $q=\sum_{i=m}^\infty a_in^i\in \Q_n$ with $a_m\neq 0$, we define the \emph{$n$--adic absolute value} of $q$ to be
\[
\|q\|_n=n^{-m}.
\]
\end{defn}
In particular, $q=\sum_{i=m}^\infty a_in^i\in \Z_n$ if and only if $m\geq 0$ if and only if $\|q\|_n\leq 1$.

\subsection{$BS(1,n)$}
Fix $n=p_1^{n_1}p_2^{n_2}\dots p_k^{n_k}$, and recall that $BS(1,n)=\langle a,t\mid tat^{-1}=a^n\rangle$. Let $\tau: BS(1,n)\to \Z$ be the homomorphism $a\mapsto 0, t\mapsto 1$. Then there is a short exact sequence \[0\to H\to BS(1,n)\xrightarrow{\tau}\Z\to 0,\] where $H:=\ker(\tau)\cong \Z\left[\frac1n\right]$. This gives rise to an isomorphism $BS(1,n)\cong \Z\left[\frac{1}{n}\right]  \rtimes_\alpha \mathbb Z$, where $\alpha(x)=n\cdot x$ for $x\in \Z\left[\frac{1}{n}\right]$. For the rest of this paper we will make the identifications \[H=\Z\left[\frac{1}{n}\right],\] and \[BS(1,n)=H\rtimes_\alpha \Z=\langle H, t \mid txt^{-1}=\alpha(x) \text{ for } x\in H\rangle.\]

In addition to the standard representation of elements of $H$ as Laurent polynomials in $n$, we also represent elements by their $n$--ary expansion; e.g. $\frac1n=0.1$ while $n+\frac1n+\frac{1}{n^4}=10.1001$. We switch between these representations interchangeably.

Given an element $x=\pm x_kx_{k-1}\cdots x_2x_1x_0.x_{-1}x_{-2}\cdots x_{-m}\in H$, we have $0\leq x_i < n$ for all $-m\leq i\leq k$. We call $-m$ the \textit{leading negative place of $x$}, which we denote by \[p(x)=-m.\] We call $x_{-m}$ the \emph{leading negative term of $x$}, which we denote by \[c(x)=x_{-m}.\]  The automorphism $\alpha$ acts on $H$ by multiplication by $n$, which has the effect of adding one to each index, so that the $i$--th term of the image of $x$ is the $(i+1)$--st term of $\alpha(x)$. For example, $\alpha(21.021311)=210.21311$ (here we are assuming that $n\geq 4$).

\begin{lem}\label{lem:movegenerators}
Let $K$ be a group of the form $K=K'\rtimes_\alpha \Z=\langle K',s\mid sxs^{-1}=\alpha(x) \textrm{ for } x\in K'\rangle$.  Suppose $Q\subseteq K'$ is a subset so that $Q\cup \{s^{\pm 1}\}$ is a generating set of $K$ and $\alpha(Q)\subset Q$.  Then any element $w\in K$ can be written as \[w=s^{-r} x_1\ldots x_m s^\ell\] where $r,\ell\geq 0$, $x_i\in Q$ for all $i$, and $r+\ell+m= \|w\|_{Q\cup\{s^{\pm 1}\}}$. Moreover, if $w\in K'$ then $r=\ell$.
\end{lem}

\begin{proof}
Write $w$ as a reduced word in $Q\cup \{s^{\pm 1}\}$. By using the relations $sx=\alpha(x)s$ and $xs^{-1}=s^{-1}\alpha(x)$ for $x\in Q$, we may move all copies of $s$ in $w$ to the right and all copies of $s^{-1}$ to the left without increasing the word length of $w$ in $Q\cup \{s^{\pm 1}\}$. The result is an expression of $w$ as a reduced word, \[w=s^{-r} x_1\ldots x_m s^\ell\] where $r,\ell\geq 0$ and $x_i\in Q$ for all $i$.  Since the word length of $w$ has not changed, we have $r+\ell+m= \|w\|_{Q\cup\{s^{\pm 1}\}}$. The second statement is clear.
\end{proof}

%%%%%%%%%%%%%%%%%%%%%%%%%%%%%%
\section{Confining subsets of $H$}

We first describe two particular subsets of $H= \Z\left[\frac1n\right]$ which are strictly confining under the action of $\alpha$ and $\alpha^{-1}$, respectively.

\begin{lem}
The subset \begin{equation} \label{eqn:Q+} Q^+=\{x\in H\mid x= \pm x_kx_{k-1}\cdots x_2x_1x_0 \textrm{ for some } k\in \mathbb N\}= \mathbb Z\subset H\end{equation} is strictly confining under the action of $\alpha$. The subset \begin{equation} \label{eqn:Q-} Q^-=\{x\in H\mid x=\pm 0.x_{-1}x_{-2}\cdots x_{-m} \textrm{ for some } m\in\mathbb N\}\subset H\end{equation} is strictly confining under the action of $\alpha^{-1}$.
\end{lem}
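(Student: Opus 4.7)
The plan is to verify the three conditions of Definition~\ref{def:confining} directly for each of $Q^+$ and $Q^-$, together with the (tacit) requirement of symmetry. Both sets are visibly symmetric (closed under negation) since negation does not change the digit description, so only conditions (a), (b), (c) remain.

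For $Q^+ = \mathbb{Z}$ under $\alpha$, I would simply translate each condition into an elementary statement about $\mathbb{Z}\subseteq \Z[\frac1n]$. Since $\alpha$ is multiplication by $n$, condition (a) becomes $n\Z \subsetneq \Z$, which is immediate because $n\ge 2$. For (b), any $x\in H = \Z[\frac1n]$ has the form $a/n^k$ with $a\in\Z$ and $k\ge 0$, so $\alpha^k(x)=a\in\Z = Q^+$, showing $H = \bigcup_{k\ge 0} \alpha^{-k}(Q^+)$. Finally $Q^+\cdot Q^+ = \Z+\Z = \Z = Q^+$ (the ``product'' is the group operation, which is addition), so (c) holds with $k_0 = 0$.

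For $Q^-$ under $\alpha^{-1}$, the key observation I would make first is that
\[
Q^- = \{y \in H : |y| < 1\},
\]
where $|\cdot|$ is the ordinary archimedean absolute value. Indeed, any $y = a/n^j \in H$ with $|a| < n^j$ admits a finite base-$n$ expansion $\pm 0.y_{-1}\cdots y_{-j}$, and conversely any element of $Q^-$ has magnitude strictly less than $1$. With this reformulation all three conditions become transparent. Condition (a): $\alpha^{-1}(Q^-) = Q^-/n$ consists of elements of magnitude less than $1/n$, so it is properly contained in $Q^-$ (e.g., $1/n\in Q^-\setminus \alpha^{-1}(Q^-)$). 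Condition (b): for any $x \in H$, picking $k$ large enough ensures $|x|/n^k < 1$, i.e., $\alpha^{-k}(x)\in Q^-$. Condition (c): if $y,z\in Q^-$ then $|y+z| < 2 \le n$, so $|(y+z)/n| < 1$, giving $\alpha^{-1}(Q^- + Q^-)\subseteq Q^-$ with $k_0 = 1$.

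No step presents a genuine obstacle; the only place where one might worry is condition (c) for $Q^-$ when $n=2$, since the bound $|y+z|<2$ is tight, but the strictness of the inequality is preserved after dividing by $n$, so the argument goes through uniformly. The whole proof is essentially a translation between the digit description of elements of $\Z[\frac1n]$ and their archimedean magnitudes, together with the fact that $\alpha$ is multiplication by $n$.
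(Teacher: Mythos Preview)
Your proof is correct and follows essentially the same approach as the paper: a direct verification of conditions (a)--(c) of Definition~\ref{def:confining}, with the same witness $0.1=1/n$ for strictness in $Q^-$ and the same constant $k_0=1$ there. The only cosmetic difference is that you recast $Q^-$ as $\{y\in H:|y|<1\}$ and argue via the archimedean absolute value, whereas the paper works directly with the base-$n$ digit expansions; the two formulations are equivalent and the arguments translate into one another line by line.
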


\begin{proof}
We will verify that Definition \ref{def:confining} holds for $Q^-$; the proof for $Q^+$ is analogous. We have \[\alpha^{-1}(Q^-)=\{x\in H\mid x= \pm 0.0x_{-1}x_{-2} \cdots x_{-m} \mid \textrm{ for some } m\in \mathbb{Z}_{>0}\}.\] Thus $\alpha^{-1}(Q^-)\subset Q^-$, so (a) holds. Moreover, it is clear that $\bigcup_{n\geq 0}\alpha^n(Q^-)=H$. Indeed, let $x=\pm x_kx_{k-1}\cdots x_0.x_{-1}x_{-2}\cdots x_{-m}$ be any element of $H$. Since $\alpha^{-(k+1)}(x)\in Q^-$, it follows that $x\in \alpha^{k+1}(Q^-)$, and thus (b) holds. Finally, let $x,y\in Q^-$. Then we have \[x+y=\pm z_0.z_{-1} \cdots z_{-m}\] where each $z_i \in \{0,\ldots,n-1\}$. Hence $\alpha^{-1}(x+y)\in Q^-$ and (c) holds with $k_0=1$.  Thus $Q^-$ is confining under the action of $\alpha^{-1}$.  To see that $Q^-$ is \emph{strictly} confining, note that $0.1\in Q \setminus \alpha^{-1}(Q)$.
\end{proof}

The following lemma appears as \cite[Lemma~4.9 \& Corollary~4.10]{Bal}; we include a proof here for completeness. Recall that given two (possibly infinite) generating sets $S,T$ of a group $G$, we say $[S]=[T]$ if $\sup_{g\in S}\|g\|_T<\infty$ and $\sup_{h\in T}\|h\|_S<\infty$.

\begin{lem}\label{lem:elements}  Suppose $Q$ is a symmetric subset of $H$ which is confining under the action of $\alpha$.
Let $S$ be a symmetric subset of $H$ such that there exists $K\in \Z_{\geq 0}$ with $\alpha^K(g)\in Q$ for all $g\in S$. Then \[\overline{Q}=Q\cup \bigcup_{i\geq 0} \alpha^i(S)\] is confining under the action of $\alpha$ and \[ [Q\cup \{t^{\pm 1}\}]=[\overline{Q}\cup \{t^{\pm 1}\}].\]
\end{lem}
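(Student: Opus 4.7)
The plan is to verify the three confining conditions for $\overline{Q}$ and then to show the two generating sets $Q\cup\{t^{\pm 1}\}$ and $\overline{Q}\cup\{t^{\pm 1}\}$ are equivalent by bounding word lengths in each direction. The key observation that drives both halves is that the ``new'' elements added to $Q$ stabilize after finitely many iterations of $\alpha$: since $\alpha^K(S)\subseteq Q$ and $\alpha(Q)\subseteq Q$, applying $\alpha$ a further $i-K$ times gives $\alpha^i(S)\subseteq Q$ for every $i\geq K$. Hence only the finitely many ``layers'' $\alpha^0(S),\alpha^1(S),\dots,\alpha^{K-1}(S)$ genuinely enlarge $Q$, and everything above layer $K$ is already in $Q$.

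For the confining conditions, (a) is immediate because $\alpha(\alpha^i(S))=\alpha^{i+1}(S)\subseteq\overline{Q}$ and $\alpha(Q)\subseteq Q\subseteq\overline{Q}$; condition (b) is inherited from $Q$ since $Q\subseteq\overline{Q}$. The substantive step is (c). Using the observation above, one has $\alpha^K(\overline{Q})\subseteq Q$: for $x\in Q$ this is clear, while for $x=\alpha^i(s)$ with $0\leq i<K$ one writes $\alpha^K(x)=\alpha^i(\alpha^{K-i}(s))\in\alpha^i(\alpha^K(S))\subseteq\alpha^i(Q)\subseteq Q$. Then for any $x,y\in\overline{Q}$, using that $\alpha$ is a ring automorphism and that $Q$ itself satisfies (c) with some constant $k_0$,
\[
\alpha^{K+k_0}(xy)=\alpha^{k_0}\bigl(\alpha^K(x)\cdot \alpha^K(y)\bigr)\in\alpha^{k_0}(Q\cdot Q)\subseteq Q\subseteq\overline{Q},
\]
so (c) holds with constant $K+k_0$.

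For the equality of equivalence classes, one direction is free: $Q\subseteq\overline{Q}$ gives $\|g\|_{\overline{Q}\cup\{t^{\pm 1}\}}\leq 1$ for every $g\in Q\cup\{t^{\pm 1}\}$. For the other direction, it suffices to bound $\|g\|_{Q\cup\{t^{\pm 1}\}}$ uniformly for $g\in\overline{Q}$. If $g\in Q$ we are done, and if $g=\alpha^i(s)$ for some $s\in S$ and $i\geq K$ then $g\in Q$ by the stabilization argument. The remaining case is $g=\alpha^i(s)$ with $0\leq i<K$; here $g=t^i s t^{-i}$ in $BS(1,n)$, and since $q:=t^K s t^{-K}=\alpha^K(s)\in Q$ we may write
\[
g=t^{i-K}\,q\,t^{K-i},
\]
which has length at most $2K+1$ in $Q\cup\{t^{\pm 1}\}$. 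Thus $\sup_{g\in\overline{Q}\cup\{t^{\pm 1}\}}\|g\|_{Q\cup\{t^{\pm 1}\}}\leq 2K+1$, yielding the claimed equivalence.

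There is no real obstacle here; the only mild subtlety is being careful to separate the finitely many low layers $\alpha^i(S)$, $i<K$, from the high layers $i\geq K$ that are automatically absorbed into $Q$, and keeping straight that $\alpha$ acts on $H$ as a ring automorphism so it commutes with multiplication inside $Q\cdot Q$. I would present the stabilization observation $\alpha^i(S)\subseteq Q$ for $i\geq K$ up front, since both the verification of (c) and the uniform word-length bound use it directly.
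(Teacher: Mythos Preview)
Your argument is correct and follows essentially the same route as the paper's: establish $\alpha^K(\overline{Q})\subseteq Q$, use this together with the constant $k_0$ for $Q$ to verify condition (c) with constant $K+k_0$, observe that $\overline{Q}=Q\cup\bigcup_{i=0}^{K-1}\alpha^i(S)$, and bound the low layers by writing $\alpha^i(s)=t^{-(K-i)}\alpha^K(s)t^{K-i}$.

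Two small corrections that do not affect the logic. First, in your verification of $\alpha^K(\overline{Q})\subseteq Q$ you write $\alpha^K(x)=\alpha^i(\alpha^{K-i}(s))$ for $x=\alpha^i(s)$; the exponent should be $K$, not $K-i$, since $\alpha^K(\alpha^i(s))=\alpha^i(\alpha^K(s))$. Your subsequent containment $\in\alpha^i(\alpha^K(S))\subseteq\alpha^i(Q)\subseteq Q$ is already written for the correct expression, so this is just a typo. Second, $\alpha$ is \emph{not} a ring automorphism of $\Z[1/n]$ (multiplication by $n$ fails to preserve $1$ and the ring product); what you actually use, and all you need, is that $\alpha$ is a group automorphism of the additive group $H$, so that $\alpha(g\cdot h)=\alpha(g)\cdot\alpha(h)$ in the group sense appearing in $Q\cdot Q$.
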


We note that this lemma applies, for example, to all finite symmetric subsets $S$ of $H$.

\begin{proof}[Proof of Lemma \ref{lem:elements}]
First we prove that $\overline{Q}$ is confining under the action of $\alpha$. 

Conditions (a) and (b) from Definition \ref{def:confining} are clear (using that $Q\subseteq \overline{Q}$ for condition (b)).

To see that condition (c) holds, note that for any $i\geq 0$, and any $g\in S$, \[\alpha^K(\alpha^i(g))=\alpha^i(\alpha^K(g))\in\alpha^i(Q)\subseteq Q.\] We also have $\alpha^K(g)\in Q$ for any $g\in\overline{Q}$. Hence, if $g,h\in \overline{Q}$ we have $\alpha^K(g)\in Q$ and $\alpha^K(h)\in Q$, and therefore \[\alpha^{K+k_0}(g+h)=\alpha^{k_0}(\alpha^K(g)+\alpha^K(h))\in \alpha^{k_0}(Q + Q)\subseteq Q \subseteq \overline{Q},\] where $k_0$ is large enough so that $\alpha^{k_0}(Q+Q)\subseteq Q$.  Therefore (c) holds with constant $K+k_0$.

To see that $[Q\cup \{t^{\pm 1}\}]=[\overline{Q}\cup \{t^{\pm 1}\}]$, note first of all that clearly $[\overline{Q}\cup \{t^{\pm 1}\}]\preccurlyeq [Q \cup \{t^{\pm 1}\}]$. On the other hand, by our above observation, $\overline{Q}$ is really just a finite union: \[\overline{Q}=Q\cup \bigcup_{i=0}^{K-1} \alpha^i(S).\] For each $i$ between 0 and $K-1$ and each $g\in S$, we have \[\alpha^i(g)=\alpha^{-(K-i)}(\alpha^K(g))=t^{-(K-i)}\alpha^K(g)t^{(K-i)}\] and $\alpha^K(g)\in Q$. Hence  \[\| \alpha^i(g) \|_{Q\cup \{t^{\pm 1}\} } \leq 2(K-i)+1\leq 2K+1.\] In other words, any element of $\overline{Q}\cup \{t^{\pm 1}\}$ has word length at most $2K+1$ with respect to $Q\cup \{t^{\pm 1}\}$, so $[Q \cup \{t^{\pm 1}\}] \preccurlyeq [\overline{Q} \cup \{t^{\pm 1}\}]$.
\end{proof}

\begin{lem} \label{lem:Q+subset}
For any $Q\subseteq H$ which is confining under the action of $\alpha$, we have $[Q\cup\{t^{\pm 1}\}] \preccurlyeq [Q^+ \cup \{t^{\pm 1}\}]$.
\end{lem}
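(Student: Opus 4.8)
The plan is to unwind the definition of the partial order and reduce to a bound on word lengths. Since $[S]\preccurlyeq[T]$ means exactly $\sup_{g\in T}\|g\|_{S}<\infty$, to prove $[Q\cup\{t^{\pm1}\}]\preccurlyeq[Q^+\cup\{t^{\pm1}\}]$ it suffices to show $\sup_{m\in Q^+}\|m\|_{Q\cup\{t^{\pm1}\}}<\infty$ (the elements $t^{\pm1}$ have length $1$). As $Q^+=\Z$ and $t$-conjugation realizes $\alpha$, so that $t^{-r}x\,t^{r}=\alpha^{-r}(x)=n^{-r}x$, the cleanest route is to prove the purely arithmetic statement that there is an integer $r_0\geq0$ with $n^{r_0}\Z\subseteq Q$. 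Granting this, every integer $m$ satisfies $mn^{r_0}\in Q$, whence $m=t^{-r_0}(mn^{r_0})t^{r_0}$ exhibits $\|m\|_{Q\cup\{t^{\pm1}\}}\leq 2r_0+1$, and the lemma follows.

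To locate $r_0$ I would work with $R:=Q\cap\Z$. From confining conditions (a) and (c) one reads off that $R$ is symmetric, contains $0$, satisfies $nR\subseteq R$ (since $\alpha(Q)\subseteq Q$ and $\alpha(\Z)=n\Z$), and satisfies $n^{k_0}(R+R)\subseteq R$ (since $\alpha^{k_0}(Q+Q)\subseteq Q$). We may assume $k_0\geq1$: if (c) held with $k_0=0$, i.e. $Q+Q\subseteq Q$, then $\alpha(Q+Q)\subseteq\alpha(Q)\subseteq Q$ by (a), so (c) also holds with exponent $1$; thus $n^{k_0}\geq 2$. Finally, condition (b) supplies, for each of the finitely many residues $s\in\{0,1,\dots,n^{k_0}-1\}$, an exponent $j(s)$ with $sn^{j(s)}\in R$. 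I then set $r_0:=k_0+\max_s j(s)$, so that $sn^{r_0-k_0}\in R$ for every such $s$ (raising the exponent using $nR\subseteq R$).

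The heart of the argument is a strong induction on $|c|$ proving $cn^{r_0}\in R$ for every $c\in\Z$; by symmetry of $R$ it is enough to treat $c\geq0$. The case $c=0$ is immediate. For $c\geq1$ I divide with remainder, $c=n^{k_0}q+s$ with $0\leq s<n^{k_0}$, so that $q=\lfloor c/n^{k_0}\rfloor$ satisfies $q<c$ (this is exactly where $n^{k_0}\geq2$ is used). The inductive hypothesis gives $qn^{r_0}\in R$, the choice of $r_0$ gives $sn^{r_0-k_0}\in R$, and therefore
\[
cn^{r_0}=n^{k_0}\!\left(qn^{r_0}+sn^{r_0-k_0}\right)\in n^{k_0}(R+R)\subseteq R .
\]
This closes the induction and yields $n^{r_0}\Z\subseteq R\subseteq Q$, completing the plan above.

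I expect the main obstacle to be obtaining a shift $r_0$ that is \emph{uniform} in $m$. The naive strategies — summing a base-$n$ expansion digit by digit, or combining the powers $n^j\in Q$ in a binary tree — only place $mn^{r}$ into $Q$ with $r$ growing (logarithmically) in the number of digits of $m$, because each application of condition (c) spends a factor of $\alpha^{k_0}$, and these accumulate. The device that removes this growth is to run the induction through division with remainder by $n^{k_0}$: this contracts $|c|$ by a factor of at least two at each step while keeping every intermediate quantity a genuine element of $R$ at the \emph{single fixed} shift $r_0$, so the $\alpha^{k_0}$'s never pile up. Once $n^{r_0}\Z\subseteq Q$ is established, the word-length bound $\|m\|_{Q\cup\{t^{\pm1}\}}\leq 2r_0+1$ is immediate and gives the claimed domination.
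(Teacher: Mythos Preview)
Your proof is correct, but it proceeds along a different line from the paper's. The paper first invokes Lemma~\ref{lem:elements} to enlarge $Q$ to a confining set $\overline{Q}$ containing $\{\pm 1\}$, and then runs an interval-by-interval induction to show that every multiple of $n^{k_1}$ can be written as a sum of at most $n^{k_1}$ elements of $\overline{Q}$; this yields a uniform word-length bound in $\overline{Q}$ alone (no $t$'s needed). You instead work entirely inside $R=Q\cap\Z$, observe that $nR\subseteq R$ and $n^{k_0}(R+R)\subseteq R$, and use division with remainder by $n^{k_0}$ to prove directly that $n^{r_0}\Z\subseteq Q$; the word-length bound then comes from a single conjugation by $t^{r_0}$.

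Your approach is arguably cleaner: it avoids the auxiliary lemma about enlarging confining subsets, and the induction is a standard Euclidean-type descent rather than the paper's somewhat intricate bookkeeping. More notably, the inclusion $n^{r_0}\Z\subseteq Q$ that you establish is exactly the content of the paper's later Lemma~\ref{lem:Ziterate}, which the paper derives \emph{from} the present lemma via Lemma~\ref{lem:movegenerators}. So you have effectively proved Lemmas~\ref{lem:Q+subset} and~\ref{lem:Ziterate} in one stroke. The paper's route, on the other hand, yields the slightly different intermediate fact that integers lie in a bounded power $\overline{Q}^{\,n^{k_1}}$, which could be of independent use. One small point you pass over: that $0\in R$ follows, e.g., from $0=\alpha^{k_0}(q+(-q))$ for any $q\in Q$ (and $Q\neq\emptyset$ by condition~(b)); this justifies your base case.
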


\begin{proof}
We show that every element of $Q^+=\Z$ has bounded word length with respect to $Q\cup \{t^{\pm 1}\}$. First, we apply Lemma \ref{lem:elements} with $S=\{\pm 1\}$ to pass to $\overline{Q}\supset Q$ such that $\{\pm 1\} \subseteq \overline{Q}$ and $[Q\cup \{t^{\pm 1}\}]=[\overline{Q} \cup \{t^{\pm 1}\}]$.

We begin by showing that every element of $\Z_{>0}=\{1,2,\ldots\}$ has bounded word length with respect to $\overline{Q}\cup\{t^{\pm 1}\}$. Choose $k_1$ such that $\alpha^{k_1}(\overline{Q} + \overline{Q})\subseteq \overline{Q}$. We actually initially prove that every element of $\alpha^{k_1}(\Z_{>0})=\{n^{k_1},2n^{k_1},3n^{k_1},\ldots\}$ has bounded word length with respect to $\overline{Q}\cup\{t^{\pm 1}\}$. If every such element has word length at most $ L$ then every element of $\Z_{>0}$ has word length less than $ L+n^{k_1}$ with respect to $\overline{Q}\cup\{t^{\pm 1}\}$ because such an element can be written as \[a n^{k_1}+\underbrace{1+\cdots +1}_{< n^{k_1} \text{ times}}, \quad \textrm{where} \quad a\in \Z_{>0}\] and $1\in \overline{Q}$.

The proof of this weaker statement is by induction. First, note that $\alpha^{k_1}(1)=n^{k_1}\in \overline{Q}$. Hence every element of the set \[\{n^{k_1},2n^{k_1}, 3n^{k_1}, \ldots,n^{2k_1}=n^{k_1}\cdot n^{k_1}\}\] has word length at most $ n^{k_1}$ with respect to $\overline{Q}\cup\{t^{\pm 1}\}$. Suppose for induction that every element of \[\{n^{(l-1)k_1},n^{(l-1)k_1}+n^{k_1},n^{(l-1)k_1}+2n^{k_1},\ldots,n^{lk_1}\}=\{an^{k_1}\mid a\in \Z_{>0}\}\cap [n^{(l-1)k_1},n^{lk_1}]\] has word length at most $ n^{k_1}$ with respect to $\overline{Q}\cup\{t^{\pm 1}\}$. Enumerate the elements of this set as \[x_0=n^{(l-1)k_1},x_1=n^{(l-1)k_1}+n^{k_1},\ldots, x_s=n^{lk_1}.\] Consider an element \[y\in \{n^{lk_1},n^{lk_1}+n^{k_1},n^{lk_1}+2n^{k_1},\ldots,n^{(l+1)k_1}\}=\{an^{k_1}\mid a\in \Z_{>0}\}\cap [n^{lk_1},n^{(l+1)k_1}].\] Such an element $y$ satisfies \[n^{k_1}x_j \leq y\leq n^{k_1}x_{j+1}=n^{k_1}(x_j+n^{k_1})=n^{k_1}x_j + n^{2k_1}\] for some $j$. Hence we have \begin{equation} \label{eqn:mid y} y=n^{k_1}x_j+an^{k_1},\end{equation} where $0\leq a\leq n^{k_1}$. Since $x_j$ has word length at most $ n^{k_1}$, we may write $x_j=g_1+\cdots+g_{n^{k_1}}$ where all $g_i\in \overline{Q}$ and $g_i=0$ for all $i>\|x_j\|_{\overline Q\cup\{t^{\pm 1}\}}$. Thus we can rewrite  equation (\ref{eqn:mid y}) as \[\begin{tabular}{l l l} 

$y$ & $=$ & $n^{k_1}(g_1+\ldots+g_{n^{k_1}})+n^{k_1}(\underbrace{1+\cdots+1}_{a\leq n^{k_1} \text{ times}})$ \\
& $=$ & $n^{k_1}(g_1+1)+n^{k_1}(g_2+1)+\cdots +n^{k_1}(g_a+1)+n^{k_1}(g_{a+1})+\cdots+n^{k_1}(g_{n^{k_1}})$. \end{tabular}\] In this last sum, every term is an element of $\overline{Q}$, and there are $n^{k_1}$ terms. Thus $\|y\|_{\overline{Q}\cup \{t^{\pm 1}\}}\leq n^{k_1}$. This completes the induction.

We have shown so far that every element of $\Z_{>0}$ has bounded word length with respect to $\overline{Q}$. A completely analogous argument using multiples of $-n^{k_1}$ proves that every element of $\Z_{<0}=\{-1,-2,\ldots\}$ has bounded word length with respect to $\overline{Q}$. Hence we have shown \[[Q\cup \{t^{\pm 1}\}]=[\overline{Q}\cup \{t^{\pm 1}\}]\preccurlyeq [\Z\cup \{t^{\pm 1}\}] = [Q^+\cup \{t^{\pm 1}\}]. \qedhere\]
\end{proof}

\subsection{Subsets confining under the action of $\alpha$ and ideals of $\Z_n$}
In this subsection, we describe the connections between subsets of $H$ which are confining under the action of $\alpha$ and ideals of $\Z_n$. 

\subsubsection{From confining subsets to ideals} \label{section:conftoideal}
We begin by describing a way to associate an ideal of $\Z_n$ to a symmetric subset $Q$ of $H$ which is confining under the action of $\alpha$. We define

\begin{equation}\label{eqn:idealL}
\mathcal{I}(Q)=\Set{
\ldots x_2 x_1 x_0 \in \Z_n \,\Bigg\vert \,\begin{aligned} & \text{for any } t\geq 0, \exists a \in Q \text{ with } a=a_r \cdots a_0. x_t \cdots x_0 \\ &\text{ for some }a_r,\dots, a_0\in\{0,\dots, n-1\}\end{aligned}
}.
\end{equation} That is, an element $\ldots x_2 x_1 x_0$ is in $\mc I(Q)$ if for any $t$, there exists a positive element of $Q$ whose fractional part is $0.x_t\cdots x_0$.  Note in particular that $\mathcal{I}(Q)$ is nonempty for any $Q$ as above. To see this, first notice that $Q$ always contains a positive \textit{integer} $a=a_r\cdots a_0$. We may equivalently write \[a=a_r\cdots a_0. \underbrace{0 \cdots \cdots 0}_{t \text{ times}}\] and since $t$ is arbitrary, this shows that $0 \ (= \ldots 0 0 0 ) \in \mathcal{I}(Q)$.

\begin{lem} \label{lem:closed}
The set $\mathcal{I}(Q)\subseteq \Z_n$ is closed.
\end{lem}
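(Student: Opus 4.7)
My plan is to show that the complement $\Z_n \setminus \mathcal{L}(Q)$ is open with respect to the $n$-adic metric. Recall that a basic open neighborhood of a point $y = \ldots y_2 y_1 y_0 \in \Z_n$ of radius $n^{-t}$ is precisely the set of $n$-adic integers whose last $t+1$ digits agree with those of $y$, i.e.\ whose expansion ends in $y_t \cdots y_1 y_0$. This matches exactly the data used in the definition of $\mathcal{L}(Q)$, so the argument should be almost immediate.

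The key step is the following. Suppose $y = \ldots y_2 y_1 y_0 \notin \mathcal{L}(Q)$. By the definition in equation~\eqref{eqn:idealL}, there exists some $t \geq 0$ such that no element of $Q$ has the form $a_r \cdots a_0.y_t \cdots y_0$. Now let $z = \ldots z_2 z_1 z_0$ be any $n$-adic integer with $d(y,z) < n^{-t}$; by the description of the $n$-adic metric, this means $z_i = y_i$ for all $0 \leq i \leq t$. Then the condition of equation~\eqref{eqn:idealL} fails for $z$ at the same value of $t$, so $z \notin \mathcal{L}(Q)$. Hence the open ball of radius $n^{-t}$ around $y$ lies entirely in the complement, proving that $\Z_n \setminus \mathcal{L}(Q)$ is open.

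I do not anticipate a significant obstacle here: the definition of $\mathcal{L}(Q)$ is phrased entirely in terms of conditions on arbitrarily long initial segments of the base-$n$ expansion (read from the decimal point leftward), and these are precisely what the $n$-adic metric detects. The only subtlety worth spelling out is that the definition only requires the \emph{existence} of some $a \in Q$ realizing each finite prefix, so membership in $\mathcal{L}(Q)$ is determined digit-by-digit; this is what makes the condition stable under taking limits, and equivalently makes the failure of membership stable under small perturbations.
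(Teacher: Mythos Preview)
Your proof is correct and is essentially the same argument as the paper's, just phrased contrapositively: the paper shows directly that any $x\in\overline{\mathcal L(Q)}$ lies in $\mathcal L(Q)$ (for each $t$ pick a nearby $y\in\mathcal L(Q)$ agreeing with $x$ in digits $0,\ldots,t$, and use the witness for $y$), while you show the complement is open by exhibiting a ball around any $y\notin\mathcal L(Q)$ using the failing index $t$. Both arguments reduce immediately to the observation that membership in $\mathcal L(Q)$ depends only on the finite tails $y_t\cdots y_0$, which is exactly what the $n$-adic metric measures.
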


\begin{proof}
Let $x\in \overline{\mathcal{I}(Q)}$ and write $x=\ldots x_2 x_1 x_0$. Then for any $t\geq 0$, there exists $y\in \mathcal{I}(Q)$ with $y=\ldots y_2 y_1 y_0$ and $y_i=x_i$ for $i\leq t$. By definition of $\mathcal{I}(Q)$ there exists $a\in Q$ with $a=a_r\cdots a_0.y_t \cdots y_0$. But then of course we also have $a=a_r\cdots a_0. x_t \cdots x_0$. Since $t$ is arbitrary, this implies that $x\in \mathcal{I}(Q)$.
\end{proof}

\begin{lem}
In the notation above, $\mathcal{I}(Q)$ is an ideal of $\Z_n$.
\end{lem}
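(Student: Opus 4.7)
The plan is to establish that $\mc L(Q)$ is an ideal by verifying (a) $0 \in \mc L(Q)$, (b) $\mc L(Q)$ is closed under addition, and (c) $\mc L(Q)$ is closed under multiplication by every element of $\Z_n$; taking $r = -1$ in (c) then gives closure under negation, so these three together imply the ideal axioms. For (a), I would apply condition (b) of Definition \ref{def:confining} to $1 \in H$ to obtain some $n^k = \alpha^k(1) \in Q$, and then write $n^k = n^k.\,0\cdots 0$ with any desired number of trailing zeros to witness $0 = \ldots 000 \in \mc L(Q)$.

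For (b), suppose $x, y \in \mc L(Q)$ and fix $s \geq 0$. Let $k_0$ be the constant from condition (c) of Definition \ref{def:confining} (which for $H$ abelian reads $\alpha^{k_0}(Q+Q) \subseteq Q$). Setting $t = s + k_0$, I would use the definition of $\mc L(Q)$ to choose non-negative $a, b \in Q$ with $a = a_r \cdots a_0 . x_t \cdots x_0$ and $b = b_u \cdots b_0 . y_t \cdots y_0$. The key observation is that base-$n$ addition of $a$ and $b$ in the fractional positions $-(t+1), -t, \ldots, -1$ begins with zero carry at position $-(t+1)$ and propagates leftward in lock-step with the $n$-adic addition $x + y \in \Z_n$, which begins with zero carry at position $0$. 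Hence the fractional digits of $a + b$ at positions $-1, -2, \ldots, -(t+1)$ are exactly $(x+y)_t, (x+y)_{t-1}, \ldots, (x+y)_0$. Then $\alpha^{k_0}(a+b) = n^{k_0}(a+b) \in Q$ by condition (c), and multiplication by $n^{k_0}$ shifts the decimal point right by $k_0$ places, producing a non-negative element of the form $a'_v \cdots a'_0 . (x+y)_s \cdots (x+y)_0$ that witnesses $x + y \in \mc L(Q)$ at the parameter $s$.

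For (c), iterating additive closure from (b) gives $m x \in \mc L(Q)$ for all $m \in \Z_{\geq 0}$ and all $x \in \mc L(Q)$. For general $r \in \Z_n$, I would approximate $r$ by the non-negative integers $r_k = \phi_k(r) \in \{0, 1, \ldots, n^k - 1\}$: since $r - r_k \in n^k \Z_n$, we have $r_k x \to rx$ in the $n$-adic metric for any $x \in \mc L(Q)$. Each $r_k x \in \mc L(Q)$, and $\mc L(Q)$ is closed by Lemma \ref{lem:closed}, so $rx \in \mc L(Q)$.

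The hard part will be the digit-matching in (b): one must carefully verify that the carry propagation in the fractional part of $a + b$ agrees digit-for-digit with the $n$-adic carry propagation of $x + y$, and choose the buffer $t = s + k_0$ precisely so that the $k_0$-place decimal shift induced by $\alpha^{k_0}$ still leaves the desired $(s+1)$ fractional digits of $x + y$ visible in the resulting element of $Q$.
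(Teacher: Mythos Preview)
Your proposal is correct and follows essentially the same approach as the paper: for additive closure you pick witnesses in $Q$ with $k_0$ extra fractional digits so that after applying $\alpha^{k_0}$ the desired tail remains, and for multiplicative closure you iterate addition to handle non-negative integer multiples and then pass to a limit using Lemma~\ref{lem:closed}. The only cosmetic difference is that you include the verification $0\in\mathcal{L}(Q)$ inside the lemma, whereas the paper established this immediately after the definition of $\mathcal{L}(Q)$.
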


\begin{proof}
First we show that $\mathcal{I}(Q)$ is closed under addition. Let \[x=\dots x_2 x_1 x_0 \text{ and } y=\dots y_1 y_1 y_0 \in \mathcal{I}(Q)\] and\[ \begin{array}{l l l l l}
& \cdots & x_2 & x_1 & x_0 \\
+ & \cdots & y_2 & y_1 & y_0 \\
\hline
 & \cdots & z_2 & z_1 & z_0\\
\end{array}\] Let $k_0$ be large enough that $\alpha^{k_0}(Q+Q)\subseteq Q$. By definition of $\mathcal{I}(Q)$, for any $t\geq 0$ there exist (positive numbers) \[ \begin{array}{l}
a=a_r\cdots a_0. x_{t+k_0} x_{t+k_0-1} \cdots x_0 \\
b=b_s\cdots b_0. y_{t+k_0} y_{t+k_0-1} \cdots y_0 \end{array}\] in $Q$. We see immediately that $a+b$ is given by \[c_u\cdots c_0. z_{t+k_0} z_{t+k_0-1} \cdots z_0\] for some $c_u, \ldots, c_0 \in \{0,\ldots, n-1\}$. This implies that \[\alpha^{k_0}(a+b)=c_u\cdots c_0 z_{t+k_0} z_{t+k_0-1}\cdots z_{t+1}. z_t \cdots z_0\in Q.\] Since $t$ is arbitrary, this implies that $z\in \mathcal{I}(Q)$.

Now we show that $\mathcal{I}(Q)$ is closed under multiplication by elements of $\Z_n$. Let \[x \in \mathcal{I}(Q) \text{ and } p=\ldots p_2 p_1 p_0 \in \Z_n.\] For every $t\geq 0$ we have \[p_t \ldots p_1 p_0 \cdot x = \underbrace{x+ \cdots + x}_{p_t \ldots p_1 p_0 \text{ times}}\in \mathcal{I}(Q)\] by the above paragraph. Note that $p\cdot x$ is the limit of the sequence $\{p_t\ldots p_0 \cdot x\}_{t=0}^\infty\subseteq \mathcal{I}(Q)$. But by Lemma \ref{lem:closed}, $\mathcal{I}(Q)$ is closed, so this implies that $p\cdot x\in \mathcal{I }(Q)$ as well.
\end{proof}

\subsubsection{From ideals to confining subsets}

We next describe how to associate a subset of $H$ which is confining under the action of $\alpha$ to an ideal of $\Z_n$. For any ideal $\mathfrak b$ of $\Z_n$, let 

\begin{equation}\label{eqn:subsetS}
\mc C(\frak b)= \Set{
(-1)^\delta x_r\cdots x_0. x_{-1} \cdots x_{-s}\in H \,\Bigg\vert \,\begin{aligned} &  \delta \in\{0,1\}  \text{ and } 
 \exists b\in \mathfrak{b}  \text{ with }  b= \ldots  c_2 c_1 x_{-1} \ldots x_{-s} \\&\text{ for some } c_1, c_2, \ldots \in \{0,\ldots, n-1\}\end{aligned} 
}.
\end{equation}
Thus $\mc C(\mathfrak b)$ is the set of elements of $H$ whose fractional parts appear as the tail end of digits of some element of the ideal $\frak b$. 

\begin{rem}  \label{rem:Zinsubset} Since $0\in\frak b$ for any ideal $\frak b$, it follows that $\mc C(\frak b)$ must contain $\Z$.
\end{rem}

\begin{lem} \label{lem:idealtoconfining}
In the notation above, $\mathcal{C}(\mathfrak b)$ is confining under the action of $\alpha$.
\end{lem}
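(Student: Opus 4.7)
My plan is to verify the three conditions of Definition \ref{def:confining} for $Q = \mathcal{S}(\mathfrak{b})$. Conditions (a) and (b) will follow from direct manipulation of base-$n$ expansions. For (a), if $x = (-1)^\delta x_r\cdots x_0.x_{-1}\cdots x_{-s}\in \mathcal{S}(\mathfrak{b})$ has witness $b = \ldots c_2 c_1 x_{-1}\ldots x_{-s}\in \mathfrak{b}$, then $\alpha(x) = (-1)^\delta x_r \cdots x_0 x_{-1} . x_{-2} \cdots x_{-s}$ has fractional digits coinciding with the last $s-1$ digits of the same $b$, so $b$ remains a valid witness; for $s = 0$, $\alpha(x) \in \mathbb{Z} \subseteq \mathcal{S}(\mathfrak{b})$ by Remark \ref{rem:Zinsubset}. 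For (b), every $x \in H$ has some finite fractional-digit count $s$, so $\alpha^s(x) \in \mathbb{Z} \subseteq \mathcal{S}(\mathfrak{b})$ and $x \in \alpha^{-s}(\mathcal{S}(\mathfrak{b}))$.

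The heart of the argument is condition (c), where I expect $k_0 = 0$ to suffice. Given $x, y \in \mathcal{S}(\mathfrak{b})$ with witnesses $b_x, b_y$ and fractional-digit counts $s_x, s_y$, my first step is to pad to a common count $s = \max(s_x, s_y)$: replacing $b_x$ by $n^{s - s_x} b_x \in \mathfrak{b}$ (which lies in $\mathfrak{b}$ because $\mathfrak{b}$ is an ideal) shifts all digits left by $s - s_x$ positions and thus witnesses $x$ written with $s$ fractional digits (the last $s - s_x$ being $0$); similarly for $y$. With this common $s$, being a witness for $x$ amounts to the equality $\phi_s(b_x) = n^s \cdot |x|_{\mathrm{frac}}$, viewed in $\{0, 1, \ldots, n^s - 1\}$, where $|x|_{\mathrm{frac}}$ denotes the fractional part of $|x|$ and $\phi_s : \mathbb{Z}_n \to \mathbb{Z}/n^s\mathbb{Z}$ is the reduction homomorphism; an analogous identity holds for $y$.

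The crucial observation is that $\phi_s(\mathfrak{b})$ is an ideal, hence an additive subgroup, of $\mathbb{Z}/n^s\mathbb{Z}$. A brief case analysis on the signs of $x, y$ (and, when they have opposite signs, on which of $|x|, |y|$ is larger) will show that $n^s \cdot |x + y|_{\mathrm{frac}} \equiv \pm \phi_s(b_x) \pm \phi_s(b_y) \pmod{n^s}$, with the signs determined by the case. The corresponding element $\pm b_x \pm b_y \in \mathfrak{b}$ then serves as a witness for $x + y \in \mathcal{S}(\mathfrak{b})$, since $x + y$ has at most $s$ fractional digits. I expect the main obstacle to be this sign-and-carry bookkeeping, which resolves cleanly once one notes the additive-subgroup property of $\phi_s(\mathfrak{b})$.
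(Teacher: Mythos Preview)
Your proposal is correct and follows essentially the same approach as the paper: both proofs verify (a) and (b) directly, and for (c) both pad the two witnesses to a common fractional length $s$ via multiplication by a power of $n$ (using that $\mathfrak{b}$ is an ideal) and then take $\pm b_x \pm b_y \in \mathfrak{b}$ as the witness for $x+y$, with $k_0=0$. The only difference is packaging: the paper writes out the base-$n$ addition and subtraction digit by digit in each sign case, whereas you encode the same computation through the ring homomorphism $\phi_s$ and the observation that $\phi_s(\mathfrak{b})$ is an additive subgroup of $\Z/n^s\Z$, which absorbs the carry/borrow bookkeeping into the congruence $n^s\,|x+y|_{\mathrm{frac}} \equiv \pm\phi_s(b_x)\pm\phi_s(b_y)\pmod{n^s}$.
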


\begin{proof}
We will check the conditions of Definition \ref{def:confining}. 

Let \[x=x_s\cdots x_1x_0.x_{-1}x_{-2}\cdots x_{p(x)}\in\mathcal C(\frak b).\]   By definition of $\mathcal C(\frak b)$, there is an element \[b=\dots c_2 c_1x_{-1}x_{-2}\dots x_{p(x)}\in\frak b,\]  and so \[\alpha(x)=x_s\cdots x_0x_{-1}.x_{-2}\cdots x_{p(x)}\in\mathcal C(\frak b).\]  Thus $\alpha(\mathcal C(\frak b))\subseteq \mathcal C(\frak b)$, and Definition \ref{def:confining}(a) holds. 

Since $\Z\subseteq \mathcal C(\frak b)$ by Remark \ref{rem:Zinsubset}, we  have $\bigcup_{i=0}^\infty \alpha^{-i}(\mathcal C(\frak b))=H$, and so Definition \ref{def:confining}(b) holds.

Let $x,y\in\mathcal C(\frak b)$. We first deal with the case that $x$ and $y$ are both positive. We want to show that $x+y\in\mathcal C(\frak b)$. Let $x=x_r\cdots x_0.x_{-1}\cdots x_{p(x)},$ and $y=y_s\cdots y_0.y_{-1}\cdots y_{p(y)}$.  By adding initial zeros, we may take $r=s$.  We also assume without loss of generality that $p(x)\leq p(y)$. Then $x+y=z$, where $z$ is given by
\[\begin{array}{ccccccccccccccccc}
&&x_r &\cdots &x_0 & .x_{-1}& x_{-2} & \cdots & x_{p(y)} &\cdots & \cdots &\cdots & x_{p(x)} \\
+&& y_r&\cdots&y_0&  .y_{-1} & y_{-2} & \cdots & y_{p(y)} & 0&\cdots &0 &0\\ \hline
&z_t&\cdots&\cdots&z_0 &.z_{-1}&z_{-2}&\cdots &z_{p(y)} &\cdots &\cdots&\cdots  &z_{p(x)}
\end{array}
\]
where here we've assumed without loss of generality that $r\geq s$ (the same argument works if $r<s$).

By the definition of $\mathcal C(\frak b)$, there exist $a,b\in\frak b$ with $a=\dots x_{-1}x_{-2}\dots x_{p(x)}$ and $b=\dots y_{-1}y_{-2}\dots y_{p(y)}$. Since $\frak b$ is an ideal, \[n^{p(y)-p(x)}b=\dots y_{-1}y_{-2}\dots y_{p(y)}\underbrace{0\dots\dots\dots\dots 0}_{p(y)-p(x) \textrm{ times}}\in \frak b\] and  \[a+n^{p(y)-p(x)}b\in \frak b,\] where $a+n^{p(y)-p(x)}b$ is given by 
\[\begin{array}{cccccccccc}
&\dots & x_{-1}& x_{-2} & \dots & x_{p(y)} & \dots & \dots & \dots & x_{p(x)} \\
+&\dots &  y_{-1} & y_{-2} & \dots & y_{p(y)} & 0&\dots &\dots &0 \\ \hline
&\dots &z_{-1}&z_{-2}&\dots &z_{p(y)} &\dots &\dots&\dots  &z_{p(x)}.
\end{array}
\] 
Therefore, $z=x+y\in \mathcal C(\frak b)$ by the definition of $\mc C(\frak b)$.

If $x,y \in \mathcal C(\frak b)$ are both negative, then we show in a completely analogous way that $x+y\in \mathcal C(\frak b)$.

We now consider the case that one of $x,y$ is positive and the other is negative. By possibly multiplying $x+y$ by $-1$, we assume without loss of generality that $x=x_r\cdots x_0 . x_{-1} \cdots x_{p(x)}$ and $y=-y_s \cdots y_0. y_{-1}\cdots y_{p(y)}$ with $x\geq |y|$ so that also $r\geq s$. Then $x+y=z$, where $z$ is given by \[\begin{array}{cccccccccccccccc}  & x_r & \cdots &x_t & \cdots & x_s &\cdots & x_0 & .x_{-1} & x_{-2} & \cdots & x_{p(y)} & \cdots & \cdots & x_{p(x)} \\
- & & &  & & y_s & \cdots & y_0 & .y_{-1} & y_{-2} & \cdots & y_{p(y)} & 0 & \cdots & 0 \\
\hline
& & & z_t & \cdots & z_s & \cdots & z_0 & .z_{-1} & z_{-2} & \cdots & z_{p(y)} & \cdots  & \cdots & z_{p(x)}.\end{array}\] (Here we are assuming that $p(x)\leq p(y)$ but the argument is easily modified if $p(x)>p(y)$). By definition of $\mathcal C(\frak b)$, there exist elements $c,d\in \frak b$ with $c=\ldots c_2 c_1 x_{-1} \ldots x_{p(x)}$ and $d=\ldots d_2 d_1 y_{-1} \ldots y_{p(y)}$. Then $c-n^{p(y)-p(x)}d\in \frak b$  is given by \[\begin{array}{ccccccccc}

& \ldots & c_1 & x_{-1} & \ldots & x_{p(y)} & \ldots & \ldots  &x_{p(x)} \\
-& \ldots &d_1 & y_{-1} & \ldots & y_{p(y)} & 0 & \ldots & 0 \\
\hline
& \ldots & \ldots & z_{-1} & \ldots & z_{p(y)} & \ldots & \ldots & z_{p(x)}. \\ \end{array}\] Hence we see that $z\in \mathcal C (\frak b)$, as desired.

By the above discussion,  Definition \ref{def:confining}(c) holds with $k_0=0$. We conclude that $\mathcal C(\frak b)$ is confining under the action of $\alpha$.
\end{proof}

\begin{lem} \label{lem:Ziterate}
Let $Q\subseteq H$ be confining under the action of $\alpha$. Then there exists $K>0$ such that $\alpha^K(\Z)\subseteq Q$.
\end{lem}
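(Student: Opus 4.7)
The plan is to deduce the inclusion $\alpha^K(\Z)\subseteq Q$ from Lemma \ref{lem:Q+subset}, which already provides a uniform word-length bound on integers, combined with Lemma \ref{lem:movegenerators} and the iterated confining property. Specifically, Lemma \ref{lem:Q+subset} gives $[Q\cup\{t^{\pm 1}\}]\preccurlyeq [Q^+\cup\{t^{\pm 1}\}]$, which unpacks to a finite constant $N:=\sup_{m\in\Z}\|m\|_{Q\cup\{t^{\pm 1}\}}$. The strategy is to upgrade this uniform word-length bound to actual membership in $Q$ after finitely many iterations of $\alpha$.

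Fix $m\in\Z\setminus\{0\}$. Lemma \ref{lem:movegenerators} lets me write $m=t^{-r}x_1\cdots x_p t^r$ with $x_i\in Q$ and $2r+p\le N$ (here $r=s$ because $m\in H$). Since $H$ is abelian, the product of the $x_i$ equals their sum in $H$ and conjugation by $t^r$ acts as $\alpha^r$, so $\alpha^r(m)=x_1+\cdots+x_p$ expresses $\alpha^r(m)$ as a sum of $p\ge 1$ elements of $Q$. A short induction from $Q+Q\subseteq\alpha^{-k_0}(Q)$ gives $\underbrace{Q+\cdots+Q}_{p\text{ summands}}\subseteq\alpha^{-(p-1)k_0}(Q)$, hence $mn^{r+(p-1)k_0}=\alpha^{r+(p-1)k_0}(m)\in Q$. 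The exponent $r+(p-1)k_0$ is bounded uniformly in $m$, for instance by $(k_0+1)N$: indeed when $k_0\ge 1$ one has $r+(p-1)k_0\le r+pk_0\le k_0(2r+p)\le k_0 N$, and when $k_0=0$ the exponent equals $r\le N/2$. Taking $K$ to be any such uniform bound and using $\alpha(Q)\subseteq Q$ to promote elements to higher exponents yields $mn^K\in Q$ for every $m\ne 0$; the case $m=0$ is immediate since $0\in Q$ (as $Q$ is symmetric, nonempty, and $\alpha^{k_0}(Q+Q)\subseteq Q$ forces $0=\alpha^{k_0}(q+(-q))\in Q$).

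The main obstacle is that a direct base-$n$ expansion of $m$ would produce $O(\log_n m)$ summands and force the exponent of $\alpha$ needed to absorb the sum back into $Q$ to grow with $m$, ruling out a uniform $K$. The key realization is that Lemma \ref{lem:Q+subset} already guarantees a shortest word for $m$ in $Q\cup\{t^{\pm 1}\}$ with a uniformly bounded number of $Q$-letters, and after Lemma \ref{lem:movegenerators} normalizes the word shape, this uniform bound on $p$ is exactly what makes the iterated confining absorption work in a bounded number of $\alpha$-steps.
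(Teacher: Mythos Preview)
Your proof is correct and follows essentially the same approach as the paper: both use Lemma~\ref{lem:Q+subset} to obtain a uniform bound on $\|m\|_{Q\cup\{t^{\pm 1}\}}$, invoke Lemma~\ref{lem:movegenerators} to put the word in normal form $t^{-r}(x_1+\cdots+x_p)t^r$, and then iterate the confining relation $\alpha^{k_0}(Q+Q)\subseteq Q$ a uniformly bounded number of times. The only differences are cosmetic bookkeeping (you track $r+(p-1)k_0$ and then pad, whereas the paper first pads $r$ up to a common bound $R$ and then applies $\alpha^{Rk_0}$), and you handle $m=0$ separately rather than leaving it implicit.
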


\begin{proof}
By Lemma \ref{lem:Q+subset}, every element of $\Z=Q^+$ has uniformly bounded word length with respect to the generating set $Q\cup \{t^{\pm 1}\}$ of $H$. Consider an element $w\in \Z$. By Lemma \ref{lem:movegenerators} we may write $w$ as a reduced word \[w=t^{-r} x_1\ldots x_m t^r,\] where $r\geq 0$ and $x_i\in Q$ for all $i$. This gives us \[w=\alpha^{-r}(x_1)+\cdots +\alpha^{-r}(x_m).\]  Since $\|w\|_{Q\cup\{t^{\pm 1}\}}$ is uniformly bounded, we have both $r$ and $m$ are uniformly bounded, say $r,m\leq R$. Hence we have \[\alpha^R(w)=\alpha^{R-r}(x_1)+\cdots + \alpha^{R-r}(x_m),\] and $\alpha^{R-r}(x_i)\in Q$ for each $i$. Thus, $\alpha^R(w)\in Q^R$, where $Q^R$ represents the words of length at most $ R$ in $Q$. Consequently, $\alpha^{Rk_0}(\alpha^R(w))\in \alpha^{Rk_0}(Q^R)\subseteq Q$ where the last inclusion follows by Definition \ref{def:confining}(c). Thus we see that $\alpha^K(\Z)\subseteq Q$, where $K=Rk_0+R$.
\end{proof}

\begin{lem} \label{lem:SLTsubsetT}
Let $Q\subseteq H$ be confining under the action of $\alpha$. Then there exists $M>0$ such that $\mathcal{C}(\mathcal{I}(Q))\subseteq \alpha^{-M}(Q)$.
\end{lem}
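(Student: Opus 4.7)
The plan is to take $x \in \mathcal{S}(\mathcal{L}(Q))$ and show directly that some fixed power of $\alpha$ sends it into $Q$. Since $\mathcal{S}(\mathcal{L}(Q))$ is symmetric under negation by its definition in (\ref{eqn:subsetS}), and $Q$ is symmetric by standing hypothesis, I may assume $x > 0$, and write $x = x_r \cdots x_0 . x_{-1} \cdots x_{-s}$. By the definition of $\mathcal{S}$, there exists some $b \in \mathcal{L}(Q)$ whose lowest $s$ base-$n$ digits are exactly $x_{-1}, \ldots, x_{-s}$. I then apply the defining property of $\mathcal{L}(Q)$ from (\ref{eqn:idealL}) with $t = s-1$ to produce an element $a = a'_{r'} \cdots a'_0 . x_{-1} \cdots x_{-s} \in Q$ whose fractional part agrees with that of $x$. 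In particular the difference $x - a$ is an integer.

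Now I invoke Lemma \ref{lem:Ziterate} to obtain $K > 0$ such that $\alpha^K(\mathbb{Z}) \subseteq Q$, which gives $\alpha^K(x - a) \in Q$. Iterating Definition \ref{def:confining}(a) gives $\alpha^K(a) \in Q$ as well, so
\[
\alpha^K(x) = \alpha^K(x - a) + \alpha^K(a) \in Q + Q.
\]
Definition \ref{def:confining}(c) supplies $k_0 \geq 0$ with $\alpha^{k_0}(Q + Q) \subseteq Q$, and so setting $M = K + k_0$ gives
\[
\alpha^M(x) = \alpha^{k_0}\bigl(\alpha^K(x)\bigr) \in \alpha^{k_0}(Q + Q) \subseteq Q,
\]
i.e.\ $x \in \alpha^{-M}(Q)$. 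Since neither $K$ nor $k_0$ depends on $x$, this establishes the desired uniform inclusion.

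There is no real obstacle beyond bookkeeping: the key observation is that the defining condition (\ref{eqn:idealL}) of $\mathcal{L}(Q)$ was set up precisely so that, applied with $t$ equal to the length of the fractional expansion of $x$, it produces a single element of $Q$ whose fractional part matches that of $x$. This reduces the problem to an assertion purely about integers, where Lemma \ref{lem:Ziterate} already supplies a uniform iterate landing in $Q$; the remaining step is just combining an integer with an element of $Q$ via the approximate closure condition (c).
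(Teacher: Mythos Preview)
Your proof is correct and follows essentially the same approach as the paper's own proof: both produce an element of $Q$ with the same fractional part as the given element of $\mathcal{S}(\mathcal{L}(Q))$, observe that the difference is an integer, and then combine Lemma~\ref{lem:Ziterate} with condition~(c) to obtain a uniform $M$. The only differences are notational (you call the element of $\mathcal{S}(\mathcal{L}(Q))$ by $x$ and the element of $Q$ by $a$, while the paper swaps these roles), and you make explicit the use of condition~(a) to get $\alpha^K(a)\in Q$, which the paper leaves implicit.
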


\begin{proof}
Let $a\in \mathcal{C}(\mathcal{I}(Q))$. Since $Q$ and $\mathcal{C}(\mathcal{I}(Q))$ are symmetric, we may suppose that $a=a_r \cdots a_0.a_{-1} \cdots a_{-s}$ is positive. By definition of $\mathcal{C}(\mathcal{I}(Q))$ there exists an element \[x=\ldots x_2 x_1 a_{-1} \ldots a_{-s} \in \mathcal{I}(Q).\] Then by definition of $\mathcal{I}(Q)$, there exists an element \[b=b_t \cdots b_0. a_{-1} \cdots a_{-s}\in Q.\] 
We may add an integer $c$ to $b$ to obtain \[c+b=a_r\cdots a_0.a_{-1} \cdots a_{-s}=a,\] and by Lemma \ref{lem:Ziterate}, we have $\alpha^K(c)\in Q$. Thus, \[\alpha^K(a)=\alpha^K(c+b)=\alpha^K(c)+\alpha^K(b)\in Q+Q.\] Let $k_0$ be large enough that $\alpha^{k_0}(Q+Q)\subseteq Q$. Then we have $\alpha^{K+k_0}(a)=\alpha^{k_0}(\alpha^K(a))\in Q$ so the result holds with $M=K+k_0$.
\end{proof}

Recall that two ideals $\frak a,\frak b$ in $\Z_n$ are equivalent (written $\frak a\sim\frak b$) if there exists a constant $k$ such that $n^k\frak a\subseteq \frak b$ and $n^k\frak b\subseteq \frak a$ (see Definition \ref{def:equivideal}).

\begin{lem} \label{lem:equividealsequalsubsets}
Let $\frak a,\frak b$ be ideals of $\Z_n$ such that $\frak a\sim\frak b$. Then $\mathcal C(\frak a)=\mathcal C(\frak b)$.
\end{lem}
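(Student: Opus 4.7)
The plan is to reduce the claim to two intermediate statements: (i) \emph{monotonicity}, namely $\mathfrak c\subseteq\mathfrak d\Rightarrow \mathcal S(\mathfrak c)\subseteq \mathcal S(\mathfrak d)$, and (ii) \emph{scaling invariance}, namely $\mathcal S(n^k\mathfrak c)=\mathcal S(\mathfrak c)$ for every $k\geq 0$ and every ideal $\mathfrak c$ of $\Z_n$. Granting both, the lemma follows easily by symmetry: Definition \ref{def:equivideal} yields some $k$ with $n^k\mathfrak a\subseteq \mathfrak b$, so $\mathcal S(\mathfrak a)=\mathcal S(n^k\mathfrak a)\subseteq \mathcal S(\mathfrak b)$ by (ii) and (i), and the reverse inclusion is obtained by swapping the roles of $\mathfrak a$ and $\mathfrak b$. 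Monotonicity is immediate from the definition of $\mathcal S$, since a witness coming from the smaller ideal also lies in the larger one.

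Scaling invariance exploits the freedom to represent an element $x\in H$ in the form $x=\pm x_r\cdots x_0.x_{-1}\cdots x_{-s}$ with various numbers of trailing zeros, so that the condition defining $\mathcal S$ need only be satisfied for \emph{some} representation. For the inclusion $\mathcal S(\mathfrak c)\subseteq \mathcal S(n^k\mathfrak c)$: given a witness $b\in \mathfrak c$ for $x$ in one of its representations, the element $n^k b\in n^k\mathfrak c$ witnesses $x$ in the representation padded by $k$ additional trailing zeros, since multiplying by $n^k$ shifts the $n$--adic digits by $k$ positions (inserting zeros at the low-order end). Conversely, if $c=n^k a'\in n^k\mathfrak c$ witnesses $x$ via a representation with $s$ fractional digits, then the first $k$ low-order digits of $c$ are forced to be zero, so the last $k$ fractional digits of $x$ in this representation vanish. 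If $s>k$, dropping these trailing zeros produces a new representation of $x$ with $s-k$ fractional digits witnessed by $a'\in \mathfrak c$ itself; if $s\leq k$, then the entire fractional part of $x$ is zero, so $x\in\Z\subseteq\mathcal S(\mathfrak c)$ by Remark \ref{rem:Zinsubset}.

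The main technical obstacle I anticipate is purely a bookkeeping one: the $n$--adic digits of a witness $b\in \mathfrak c$ are indexed from least to most significant, whereas the fractional digits $x_{-1},\ldots,x_{-s}$ of $x\in H$ are indexed in the opposite direction, so that ``trailing zeros after the decimal point'' on the $H$-side correspond to ``leading zeros in the low-order positions'' on the $\Z_n$-side. Once this correspondence is set up carefully, each verification above reduces to a direct chase through the definition of $\mathcal S$ and the ideal property of $\mathfrak c$.
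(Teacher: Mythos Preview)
Your proof is correct and follows essentially the same idea as the paper's: both arguments hinge on the observation that an element $b\in\mathfrak c$ and its shift $n^k b$ determine exactly the same set of allowable fractional parts in $\mathcal S$, so that $n^k\mathfrak a\subseteq\mathfrak b$ forces $\mathcal S(\mathfrak a)\subseteq\mathcal S(\mathfrak b)$. The paper states this in a single paragraph, while you have factored it cleanly into monotonicity plus scaling invariance. One small remark: the inclusion $\mathcal S(n^k\mathfrak c)\subseteq\mathcal S(\mathfrak c)$, for which you give a digit-chasing argument, follows immediately from monotonicity since $n^k\mathfrak c\subseteq\mathfrak c$ (as $\mathfrak c$ is an ideal); in fact only the other inclusion $\mathcal S(\mathfrak c)\subseteq\mathcal S(n^k\mathfrak c)$ is actually needed to run the symmetry argument.
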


\begin{proof}
By definition, the ideal $\frak b$ determines only the fractional parts of the elements in $\mathcal C(\frak b)$, and if $b=\dots b_2b_1 b_0\in\frak b$, then there are elements of $\mathcal C(\frak b)$ with fractional part $0.b_k\cdots b_0$ for each $k\geq 1$ and arbitrary integral part. From this description, it is clear that for any $k$, the elements \[b=\dots b_2b_1b_0 \qquad \textrm{ and } \qquad n^kb=\dots b_2b_1b_0\underbrace{0\dots 0}_{k \textrm{ times}}\] define the same set of fractional parts of elements in $\mathcal C(\frak b)$. Since there exists $k$ such that $n^k\frak{b}\subset \frak{a}$, we see that $\mathcal{C}(\frak{b})\subseteq \mathcal{C}(\frak{a})$. By a symmetric argument, we also have $\mathcal{C}(\frak{a})\subseteq \mathcal{C}(\frak{b})$.
\end{proof}

\begin{lem} \label{lem:confiningsubgroup}
For any ideal $\frak a\subseteq \Z_n$, the confining subset $\mathcal{C}(\mathfrak{a})$ is a subring of $H$.
\end{lem}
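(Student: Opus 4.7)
The plan is to verify the three standard subgroup axioms for $\mathcal{S}(\mathfrak{a})\subseteq H$: containing the identity, closure under inverses, and closure under addition. Two of these come essentially for free from the definition, while the third has already been proven (in disguise).

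First, I would note that $0 \in \mathcal{S}(\mathfrak{a})$. Indeed, by Remark \ref{rem:Zinsubset} we have $\Z \subseteq \mathcal{S}(\mathfrak{a})$ (since $0 \in \mathfrak{a}$ realizes any integer via its $\delta$ and integral parts, with empty fractional part). In particular $0 \in \mathcal{S}(\mathfrak{a})$. Next, closure under inverses is built into the definition (\ref{eqn:subsetS}) itself, since the sign $(-1)^\delta$ is allowed to be either $+1$ or $-1$ independently of the fractional data $x_{-1}\cdots x_{-s}$ coming from an element of $\mathfrak{a}$.

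The only substantive point is closure under addition, and this was already established in the proof of Lemma \ref{lem:idealtoconfining}. There, while verifying Definition \ref{def:confining}(c), the sum $x+y$ of two elements $x,y \in \mathcal{S}(\mathfrak{a})$ was shown to lie in $\mathcal{S}(\mathfrak{a})$ by exhibiting an explicit element of $\mathfrak{a}$ witnessing its fractional part, namely $a + n^{p(y)-p(x)} b$ (in the same-sign case) or $a - n^{p(y)-p(x)}b$ (in the mixed-sign case), where $a,b \in \mathfrak{a}$ are the witnesses for $x$ and $y$ respectively. These sums lie in $\mathfrak{a}$ precisely because $\mathfrak{a}$ is closed under addition and multiplication by powers of $n$, i.e., because $\mathfrak{a}$ is an ideal. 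This is exactly why the constant $k_0$ in Definition \ref{def:confining}(c) can be taken to be $0$ in that proof; rephrased, this says $\mathcal{S}(\mathfrak{a}) + \mathcal{S}(\mathfrak{a}) \subseteq \mathcal{S}(\mathfrak{a})$.

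Combining these three observations yields the lemma. The main "obstacle"—to the extent there is one—is simply recognizing that no new argument is required: the ideal structure of $\mathfrak{a}$ does all the work, and the already-verified strengthened form of condition (c) (with $k_0 = 0$) is literally the statement of closure under addition.
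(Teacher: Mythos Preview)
Your proposal is correct and follows essentially the same approach as the paper: the paper's proof simply observes that closure under addition was established in the proof of Lemma~\ref{lem:idealtoconfining} (where $k_0=0$), and that closure under inverses holds by definition. Your version is slightly more explicit, adding the verification that $0\in\mathcal{S}(\mathfrak{a})$, but the argument is the same.
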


\begin{proof}
It follows from the proof of Lemma \ref{lem:idealtoconfining} that $\mathcal{C}(\mathfrak{a})$ is closed under addition. Moreover, by definition it is closed under additive inverses, and so $\mc C(\frak a)$ is an additive subgroup of $H$. It also contains the multiplicative identity $1$ by definition. It remains to be shown that it is closed under multiplication.

For this purpose it will be helpful to write elements of $\Z\left[\frac{1}{n}\right]$ in a slightly different form than their base $n$ expansions. Given any element of $\Z\left[\frac{1}{n}\right]$ we may write it, for \emph{any sufficiently large $k$}, as $(-1)^\delta(an^{-k}+x)$ where $x\in \Z$, $\delta\in\{\pm 1\}$, and $a=a_0+a_1n+\ldots+a_{k-1}n^{k-1}$ with each $a_i\in \{0,\ldots,n-1\}$. That is, $x$ is the integer part and $an^{-k}$ is the fractional part.

In particular, if $u,v\in \mc C(\mathfrak a)$ then we may write \[u=(-1)^\delta (an^{-k}+x) \text{ and } v=(-1)^\epsilon (bn^{-k}+y),\] where $\delta,\epsilon\in \{0,1\}$, $x,y\in \Z$, 
\[a=a_0+a_1n+\ldots+a_{k-1}n^{k-1} \text{ and } b=b_0+b_1n+\ldots+b_{k-1}n^{k-1},\]  and the digits $a_i$ agree with the first $k$ digits of an element of $\mathfrak{a}$ and similarly for the $b_i$. This is to say that there are elements \[a+zn^k,b+wn^k \in \mathfrak a \text{ where } z,w\in \Z_n.\] We aim to show that $uv\in \mc C(\mathfrak a)$. We have 
\[
uv=(-1)^{\delta+\epsilon}(ab+ayn^k+bxn^k)n^{-2k}+xy.
\]
 Thus the fractional part of %digits to the right of the decimal point in 
 $uv$ agrees with the first $2k$ digits of the integer $ab+ayn^k+bxn^k$ (note that this integer may have arbitrarily many digits in base $n$). To show that $uv\in \mc C(\mathfrak a)$, it suffices to show that the first $2k$ digits of $ab+ayn^k+bxn^k$ agree with the first $2k$ digits of some element of $\mathfrak a$.

To show this last fact we consider the elements $a+zn^k,b+wn^k\in \mathfrak a$. Since $\mathfrak a$ is an ideal it contains the element \[(a+zn^k)(b+wn^k)-(a+zn^k)wn^k-(b+wn^k)zn^k+(a+zn^k)yn^k+(b+wn^k)xn^k.\] Expanding this expression and canceling we have that \[ ab+ayn^k+bxn^k-zwn^{2k}+zyn^{2k}+wxn^{2k}\in \mathfrak a .\] The first $2k$ digits of this element agree with the first $2k$ digits of $ab+ayn^k+bxn^k$. Thus $uv \in \mc C(\mathfrak a)$ and the proof is complete.
\end{proof}

We now give a  more concrete description of the subring $\mc C(\mathfrak a)$, which will be useful in the following subsection. By Lemma \ref{lem:equivalentfull}, there is a \emph{full} ideal $\mathfrak b$ of $\Z_n$ with $\mathfrak a \sim \mathfrak b$, and by Lemma \ref{lem:equividealsequalsubsets} we have $\mc C(\mathfrak a)= \mc C(\mathfrak b)$. Hence to describe $\mc C(\mathfrak a)$ explicitly we may assume that $\mathfrak a$ itself is full. For ease of notation we may suppose that \[\mathfrak a = \Z_{p_1^{n_1}} \times \cdots \times \Z_{p_r^{n_r}} \times 0 \times \cdots \times 0.\] Set $l=p_1^{n_1}p_2^{n_2} \cdots p_r^{n_r}$ if $r>0$ and $l=1$ if $r=0$.

\begin{prop}\label{prop:Z1/l}
We have $\mc C(\mathfrak a)=\Z\left[\frac{1}{l}\right]$ as a subring of $\Z\left[\frac{1}{n}\right]$.
\end{prop}

\begin{proof}
First we show that $\Z\left[\frac{1}{l}\right]\subset \mc C(\mathfrak a)$. Since $\mc C(\mathfrak a)$ is a subring which contains $\Z$, it suffices to show that $\frac{1}{l}\in \mc C(\mathfrak a)$. Set $q=p_{r+1}^{n_{r+1}}\cdots p_k^{n_k}$ so that $l=\frac{n}{q}$. Thus we need to show that $\frac{1}{l}=\frac{q}{n}\in \mc C(\mathfrak a)$. From this equality we see that $\frac{1}{l}$ is $0.q$ in base $n$. Hence it suffices to show that $\mathfrak a$ contains an element of $\Z_n$ whose ones digit is $q$ when written in base $n$.

%For this purpose consider the isomorphism of $\Z_n$ with $\Z_{p_1^{n_1}} \times \cdots \times \Z_{p_k^{n_k}}$. The ideal $\mathfrak{a}$ is identified with $\Z_{p_1^{n_1}} \times \cdots \times \Z_{p_r^{n_r}}\times 0 \times \cdots \times 0$ via this isomorphism. We consider the unique element $x$ of $\Z_n$ which is mapped to $(q,\ldots,q,0,\ldots,0)$ (with exactly $r$ non-zero entries) by the isomorphism.  Note that $x$ is necessarily an element of $\frak a$; we will show that the ones digit of $x$ is $q$. We reduce $x$ modulo $n$ to obtain the element $\overline{x}\in \Z/n\Z$. Via the isomorphism of $\Z/n\Z$ with $\Z/p_1^{n_1}\Z \times \cdots \times \Z/p_k^{n_k}\Z $ the residue class of $\overline{q}$ in $\Z/n\Z$ is the unique element which maps to $(\overline{q},\ldots,\overline{q},0,\ldots,0)$. Since $\overline{x}$ also maps to this element, we must have $\overline{x}=\overline{q}$ in $\Z/n\Z$. Thus, the ones digit of $x$ is $q$. Therefore $x$ is an element of $\mathfrak a$ with ones digit $q$, as desired. 

We have the following commutative diagram, where the horizontal maps are isomorphisms, the vertical map $\phi_1\colon\Z_n\to \Z/n\Z$ on the left  is the ``reduction mod $n$" map defined in \eqref{eqn:phil} which sends $a=\ldots a_2a_1a_0\in \Z_n$ to $[a]_n=a_0\in \Z/n\Z$, and the vertical map on the right is the product of the ``reduction mod $p_i^{n_i}$''  maps $\phi_1\colon \Z_{p_i^{n_i}}\to \Z/p_i^{n_i}\Z$.  Consider the unique element $x$ of $\Z_n$ whose image in $\Z_{p_1^{n_1}}\times \cdots \times \Z_{p_k^{n_k}}$ is $(q,\ldots, q, 0, \ldots, 0)$, with exactly $r$ non-zero entries.  Note that $x$ is necessarily an element of $\frak a$; we will show that the ones digit of $x$ is $q$.  Applying $\phi_1$ we obtain the element $[x]_n\in \Z/n\Z$, and applying the product of the maps $\phi_1$ to $(q,\ldots, q, 0, \ldots, 0)$ yields the element $([q]_{p_1^{n_1}},\ldots,[q]_{p_r^{n_r}},0,\ldots,0)$.  Since the diagram commutes, $[x]_n$ must be the unique element of $\Z/n\Z$ which maps to this element.  As it is clear that $[q]_n\in \Z/n\Z$ also maps to this element, we must have $[x]_n=[q]_n$.  Thus the ones digit of $x$ is $q$, as desired. 

\begin{center}
\begin{tikzcd}[contains/.style = {draw=none,"\in" description,sloped}
	,/tikz/column 2/row 1/.append style={anchor=base east}
	,/tikz/column 2/row 4/.append style={anchor=base east}
	]
x \ar[d,contains]\ar[ddd,mapsto,bend right, shift right=2]& (q,\ldots, q,0,\ldots, 0) \ar[l, mapsto] \ar[d, contains] \ar[ddd,mapsto,bend left, shift left=17] \\
\Z_n \arrow[r, "\simeq"] \arrow[d, "\phi_1"]
& \Z_{p_1^{n_1}}\times \cdots \times \Z_{p_k^{n_k}} \arrow[d, "\phi_1\times \cdots\times \phi_1"] \\
 \Z/n\Z \arrow[r, "\simeq"] &  \Z/p_1^{n_1}\Z \times \cdots \times \Z/p_k^{n_k}\Z \\
%  [x]_n \ar[u,contains] 
{[x  ]}_n\ar[u,contains] & ([q]_{p_1^{n_1}},\ldots,[q]_{p_r^{n_r}},0,\ldots,0)  \ar[u,contains] \\
\end{tikzcd}
\end{center}

%Now we show that $\mc C(\mathfrak a)\subset \Z\left[\frac{1}{l}\right]$. Consider an element $(-1)^\delta x_r\cdots x_0. a_s \cdots a_1 a_0 \in \mc C(\mathfrak a)$, where $\delta\in\{\pm 1\}$. By definition of $\mc C(\mathfrak a)$ there is an element $a=\ldots a_s \ldots a_1 a_0\in \frak a$. Recall that $\mathfrak a$ is identified with $\Z_{p_1^{k_1}}\times \cdots \times \Z_{p_r^{k_r}} \times 0\times \cdots \times 0$. We may consider $a$ modulo $n^s$. Under the isomorphism of $\Z/n^s\Z$ with $\Z/p_1^{n_1s}\Z \times \cdots \times \Z/p_k^{n_ks}\Z$, its residue class maps to 0 in $\Z/p_i^{n_is}\Z$ for each $i>r$. The residue class of $a$ modulo $n^s$ is equal to the residue class of $a_s\cdots a_0$ modulo $n^s$. Thus, $a_s\cdots a_0$ is divisible by $p_i^{k_is}$ for each $i>r$ and therefore it is also divisible by $p_{r+1}^{n_{r+1}s}\cdots p_k^{n_ks}=q^s$. Write $a_s\cdots a_0=q^sy$ for some $y\in\Z$. We therefore have that \[(-1)^\delta x_r\cdots x_0.a_s\cdots a_0=(-1)^\delta \left(x_r\cdots x_0 + \frac{q^sy}{n^s}\right)=(-1)^\delta \left(x_r\cdots x_0 + \frac{y}{l^s}\right).\] This lies in $\Z\left[\frac{1}{l}\right]$ and since the element of $\mc C(\mathfrak a)$ that we started with was arbitrary, this shows that $\mc C(\mathfrak a)\subset \Z\left[\frac{1}{l}\right]$ as claimed.

Now we show that $\mc C(\mathfrak a)\subset \Z\left[\frac{1}{l}\right]$. Consider an element $(-1)^\delta x_r\cdots x_0. a_s \cdots a_1 a_0 \in \mc C(\mathfrak a)$, where $\delta\in\{\pm 1\}$. By definition of $\mc C(\mathfrak a)$ there is an element $a=\ldots a_s \ldots a_1 a_0\in \frak a$. Recall that $\mathfrak a$ is identified with $\Z_{p_1^{n_1}}\times \cdots \times \Z_{p_r^{n_r}} \times 0\times \cdots \times 0$.  We consider the analogous commutative diagram as above, but with vertical maps given by (products of) $\phi_{s+1}$.  Since $a\in \frak a$, the $\Z/p_{i}^{n_i(s+1)}\Z$--component of the image of $\phi_{s+1}(a)$ in $ \Z/p_1^{n_1(s+1)}\Z \times \cdots \times \Z/p_k^{n_k(s+1)}\Z$ is $0$ for each $i>r$.  Note that $\phi_{s+1}(a)=\phi_{s+1}(a_s\ldots a_1a_0)$.    Thus, $a_s\cdots a_0$ is divisible by $p_i^{n_i(s+1)}$ for each $i>r$ and therefore it is also divisible by $p_{r+1}^{n_{r+1}(s+1)}\cdots p_k^{n_k(s+1)}=q^{s+1}$. Write $a_s\cdots a_0=q^{s+1}y$ for some $y\in\Z$. We therefore have that \[(-1)^\delta x_r\cdots x_0.a_s\cdots a_0=(-1)^\delta \left(x_r\cdots x_0 + \frac{q^{s+1}y}{n^{s+1}}\right)=(-1)^\delta \left(x_r\cdots x_0 + \frac{y}{l^{s+1}}\right),\] and so this element lies in $\Z\left[\frac{1}{l}\right]$.   Since the element of $\mc C(\mathfrak a)$  we started with was arbitrary, this shows that $\mc C(\mathfrak a)\subset \Z\left[\frac{1}{l}\right]$ as claimed.
\end{proof}

\subsubsection{Actions on Bass-Serre trees} \label{section:bass-serre}

 In this subsection, we give an explicit geometric description of the action of $BS(1,n)$ on the Cayley graph $\Gamma(BS(1,n),\mc C(\frak a)\cup \{t^{\pm1}\})$ for any ideal $\frak a\subseteq \Z_n$.
 We begin by considering a particular ascending HNN extension of $\mc C(\mathfrak{a})$: \[G(\mathfrak{a})=\langle \mc C(\mathfrak{a}), s \mid sxs^{-1} = \alpha(x) \text{ for } x\in \mc C(\alpha)\rangle.\] 

\begin{lem}
For any ideal $\frak a\subseteq \Z_n$,  \[BS(1,n)\cong G(\mathfrak{a}).\]
\end{lem}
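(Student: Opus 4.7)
The plan is to construct the natural homomorphism $\phi \colon G(\mathfrak{a}) \to BS(1,n)$ induced by the inclusion $\mc S(\mathfrak{a}) \hookrightarrow H$ and $s \mapsto t$, and then show it is an isomorphism. Well-definedness is immediate from the universal property of HNN extensions: the defining relation $sxs^{-1} = \alpha(x)$ for $x \in \mc S(\mathfrak{a})$ maps to $txt^{-1} = \alpha(x)$, which holds for all $x \in H$ by the definition of the semidirect product $BS(1,n) = H \rtimes_\alpha \Z$.

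Next, I would establish a normal form: every element of $G(\mathfrak{a})$ can be written as $s^{-k} x s^l$ with $k, l \geq 0$ and $x \in \mc S(\mathfrak{a})$. Because $\alpha(\mc S(\mathfrak{a})) \subseteq \mc S(\mathfrak{a})$ by Lemma \ref{lem:idealtoconfining}, the identities $sx = \alpha(x)s$ and $xs^{-1} = s^{-1}\alpha(x)$ allow me to push all occurrences of $s$ to the right and all $s^{-1}$ to the left in any word representing a group element, by induction on word length---this is the same kind of normal-form argument used in Lemma \ref{lem:movegenerators}, together with the fact that $\mc S(\mathfrak{a})$ is a subgroup (Lemma \ref{lem:confiningsubgroup}). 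Under $\phi$, such an element maps to $\alpha^{-k}(x)\, t^{l-k} \in BS(1,n)$. Surjectivity of $\phi$ now follows: every element of $BS(1,n)$ has the form $h t^n$ with $h \in H$ and $n \in \Z$, and the confining property (b) in Definition \ref{def:confining} (verified for $\mc S(\mathfrak{a})$ in Lemma \ref{lem:idealtoconfining}) writes $h = \alpha^{-k}(x)$ for some $k \geq 0$ and $x \in \mc S(\mathfrak{a})$, after which $h t^n$ easily lies in the image of $\phi$.

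For injectivity, I would suppose that $s^{-k} x s^l$ and $s^{-k'} x' s^{l'}$ have the same image under $\phi$. Comparing components in the semidirect product $H \rtimes_\alpha \Z$ yields $l - k = l' - k'$ and $\alpha^{-k}(x) = \alpha^{-k'}(x')$ in $H$; assuming $k \leq k'$ and writing $j = k' - k \geq 0$, this gives $\alpha^j(x) = x'$ in $H$ (and both sides lie in $\mc S(\mathfrak{a})$). Iterating the HNN relation shows $s^j x s^{-j} = \alpha^j(x)$ inside $G(\mathfrak{a})$, so
\[ s^{-k'} x' s^{l'} = s^{-k-j} \alpha^j(x) s^{l+j} = s^{-k-j}(s^j x s^{-j}) s^{l+j} = s^{-k} x s^l, \]
which closes the argument. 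The whole proof is essentially the standard structure theorem for ascending HNN extensions applied to $\mc S(\mathfrak{a}) \subseteq H$ with monomorphism $\alpha$; the only real content lies in invoking the confining conditions (a) and (b) at the right moments, so I do not anticipate any substantive obstacle.
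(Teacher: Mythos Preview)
Your proof is correct and follows essentially the same approach as the paper: define the obvious map $G(\mathfrak{a})\to BS(1,n)$, put elements into the normal form $s^{-k}xs^l$ via Lemmas~\ref{lem:movegenerators} and~\ref{lem:confiningsubgroup}, and then compare components in the semidirect product. The only minor differences are that the paper argues surjectivity by noting $\Z\subseteq \mc S(\mathfrak{a})$ generates $BS(1,n)$ together with $t$ (rather than invoking confining condition~(b)), and checks injectivity by showing the kernel is trivial rather than by comparing two preimages; both variants are equivalent.
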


\begin{proof}
By Remark \ref{rem:Zinsubset}, $\Z\subseteq \mc C(\frak a)$. There is an obvious homomorphism $f:G(\mathfrak{a})\to BS(1,n)$ defined by \[x\mapsto x \text{ for } x\in \mc C(\mathfrak{a}), \qquad s\mapsto t.\] This homomorphism is surjective because $BS(1,n)$ is generated by $\Z\subseteq \mc C(\mathfrak{a})$ and $t$. We now show that $f$ is injective. Let $g \in \ker(f)$. By Lemma \ref{lem:movegenerators} we can find an expression of $g$ as a minimal length word in the generating set $\mc C(\frak a)\cup\{s^{\pm1}\}$ of the form  \[g=s^{-i}(x_1+\cdots+x_w)s^j,\] where $i,j\geq 0$ and $x_i\in \mc C(\frak a)$.  By Lemma \ref{lem:confiningsubgroup} we may write $x_1+\dots+x_w=x\in \mathcal C(\frak a)$, and the result is that $g=s^{-i}xs^j$.
As $g$ is in the kernel of $f$, we have \[1=f(g)=t^{-i}xt^j=\alpha^{-i}(x)t^{j-i}.\] Since $\alpha^{-i}(x)\in H$, we obtain a contradiction unless $j=i$. In this case we have \[f(g)=\alpha^{-i}(x)=0\] in $H$, and since $\alpha$ is an automorphism, $x=0$. But then \[g=s^{-i} 0 s^i=\alpha^{-i}(0)=0\] in $G(\mathfrak{a})$. This proves the statement.
\end{proof}

Hence we have a description of $BS(1,n)$ as an HNN extension over the additive subgroup $\mc C(\mathfrak{a})\leq H$, and therefore an action of $BS(1,n)$ on the standard Bass-Serre tree associated to  this HNN extension. Denote this tree by $T(\mathfrak{a})$. For the statement of the next two results recall that \emph{equivalence} of hyperbolic actions means equivalence up to coarsely equivariant quasi-isometry.

\begin{prop} \label{prop:bstreeequiv}
Let $G$ be a group which may be expressed as an ascending HNN extension 
\[
A*_A = \langle A, s\mid sas^{-1} =\phi(a) \text{ for all } a \text{ in } A\rangle,
\]
 where $A$ is a group and $\phi$ is an endomorphism of $A$. Then the action of $G$ on the Bass-Serre tree associated to this HNN extension is equivalent to its action on $\Gamma(G,A\cup \{s^{\pm 1}\})$.
\end{prop}

Before turning to the proof,  we note one immediate corollary.

\begin{cor}
The action of $BS(1,n)$ on $\Gamma(BS(1,n),\mc C(\mathfrak{a})\cup \{t^{\pm 1}\})$ is equivalent to the action of $BS(1,n)$ on $T(\mathfrak{a})$.
\end{cor}

\begin{proof}[Proof of Proposition \ref{prop:bstreeequiv}]
We apply the standard Schwarz--Milnor Lemma (see, e.g., \cite[Lemma~3.11]{ABO}).

Denote by $T$ the Bass-Serre tree associated to this HNN extension, which  may be described as follows. The vertices of $T$ are the left cosets of $A$ in $G$, and two cosets $gA$ and $hA$ are joined by an edge if \[gA=hxsA \quad\text{ or }\quad gA=hxs^{-1}A \quad \text{ for some } x\in A.\]

Consider the vertex $v=A$ and the edge $E=[A,sA]$ containing $v$. Clearly we have $\bigcup_{g\in G} gE=T$. Hence by the Schwarz--Milnor Lemma, the action of $G$ on $T$ is equivalent to the action of $G$ on $\Gamma(G,S)$ where $S=\{g\in G: d(v,gv)\leq 3\}$. Note that $A\subset S$ since it fixes the vertex $v$, and thus $[S\cup \{s^{\pm 1}\}]\preccurlyeq [A\cup \{s^{\pm 1}\}]$. We will show that also $[A\cup \{s^{\pm 1}\}]\preccurlyeq [S\cup \{s^{\pm 1}\}]$, which will prove the proposition.

By the description of the vertices of $T$ as cosets of $A$, any vertex in the radius 3 neighborhood of $v$ has  one of the following forms:
\begin{itemize}
\item $xs^\delta v$ where $x\in A$ and $\delta \in \{\pm 1\}$; 
\item $x_1s^{\delta_1} x_2 s^{\delta_2} v$ where $x_i\in A$ %for $i=1,2$ 
and $\delta_i\in \{\pm 1\}$ for $i=1,2$; or 
\item $x_1s^{\delta_1}x_2 s^{\delta_2}x_3s^{\delta_3}v$ where $x_i \in A$ %for $i=1,2,3$ 
and $\delta_i \in \{\pm 1\}$ for $i=1,2,3$.
\end{itemize}

If $g\in S$,  it therefore sends $v$ to a vertex of one of the above three forms. We deal with the last case explicitly, showing that $g$ has bounded word length in the generating set $A\cup \{s^{\pm 1}\}$. The other two cases are entirely analogous.

If $gv=x_1s^{\delta_1}x_2s^{\delta_2}x_3s^{\delta_3}v$ then \[(x_1 s^{\delta_1}x_2 s^{\delta_2} x_3 s^{\delta_3})^{-1}g \in \operatorname{Stab}_G(v)=A.\] Hence \[g=x_1s^{\delta_1}x_2s^{\delta_2}x_3s^{\delta_3}y\] for some $y\in A$, and this shows that $g$ has word length at most 7 in the generating set $A\cup \{s^{\pm 1}\}$.
\end{proof}

%We now give a  more concrete geometric description of the Bass-Serre tree $T(\mathfrak{a})$. 
 As in the previous subsection, we may assume that $\mathfrak a$  is a full ideal, and for ease of notation we may suppose that \[\mathfrak a = \Z_{p_1^{n_1}} \times \cdots \times \Z_{p_r^{n_r}} \times 0 \times \cdots \times 0.\] We again set $l=p_1^{n_1}p_2^{n_2} \cdots p_r^{n_r}$ and $q=\frac{n}{l}=p_{r+1}^{n_{r+1}}\cdots p_k^{n_k}$.  By Proposition \ref{prop:Z1/l}, we have that $\mc C(\mathfrak a)=\Z\left[\frac{1}{l}\right]$ as a subring of $\Z\left[\frac{1}{n}\right]$.  Thus the Bass-Serre tree $T(\mathfrak a)$ is the Bass-Serre tree of the ascending HNN extension of $\Z\left[\frac{1}{l}\right]$, where one map from the edge group to the vertex group is the identity and the other is multiplication by $n$. %glued to itself via multiplication by $n$. 
 Our final goal is to give a concrete geometric description of this tree and the associated action of $BS(1,n)$.

%\subsubsection{A geometric description of the Bass-Serre trees}

%We consider the ideal $\mathfrak{a}$ of $\Z_n$ and its associated subring $\mc C(\mathfrak a)=\Z\left[\frac{1}{l}\right]$ as described in the last subsection. 

%Recall that $l$ is a divisor $p_1^{n_1}\cdots p_r^{n_r}$ of $n=p_1^{n_1} \cdots p_k^{n_k}$. 
%As before we denote . 
We first describe an explicit action $BS(1,n)$ on a tree  $T'(\mathfrak{a})$ below.  We will then show that $T(\mathfrak{a})$ and $T'(\mathfrak{a})$ are  $BS(1,n)$--equivariantly isomorphic.

\begin{defn}\label{def:treeT'}
Let $T'(\frak a)$ be the tree with the following vertices and edges.
\begin{itemize}
\item The vertices of $T'(\mathfrak a)$ are identified with $\Q_q\times \Z$ up to an equivalence relation $\sim$;
\item for pairs $(x,h)$ and $(x',h')$ in $\Q_q\times \Z$ we have $(x,h)\sim (x',h')$ if and only if $h=h'$ and $||x-x'||_q\leq q^{-h}$, where $||\cdot ||_q$ denotes the $q$--adic absolute value on $\Q_q$  (see Definition \ref{def:absval}); and 
\item a vertex represented by $(x,h)\in \Q_q\times \Z$ is joined by an edge to the vertex represented by $(x,h+1)$.
\end{itemize}
We define an action of $BS(1,n)$ on $T'(\frak a)$ as follows:
\begin{itemize}
\item if $a$ denotes the normal generator 1 of $\Z\left[\frac{1}{n}\right]\leq BS(1,n)$, then the action of $a$ on the vertices of $T'(\mathfrak a)$ is given by \[a\colon (x,h)\mapsto (x+1,h);\] and
\item the generator $t$ acts on the vertices of $T'(\mathfrak a)$ by \[t\colon(x,h)\mapsto (nx,h+1).\]
\end{itemize}
\end{defn}

Here $h$ is meant to indicate a ``height'' and the equivalence relation reflects the fact that the tree distinguishes between more $q$-adic numbers, the larger the parameter $h$ is. The reader may check that the graph described above is indeed a tree and that the actions of $a$ and $t$ do indeed define an action of $BS(1,n)$. In fact, $T'(\mathfrak a)$ is just the regular $(q+1)$-valent tree. Before explaining why $T'(\mathfrak a)$ is equivariantly isomorphic to $T(\mathfrak a)$, we present two examples that the reader may find illustrative.
\\

\begin{ex}
First consider the group $BS(1,2)$. The ring $\Z_2$ has two full ideals: $(0)$ and $\Z_2$.  If $\frak a=\Z_2$, then $l=2$ and $q=1$, so the vertices of $T'(\Z_2)$ are identified with $\{0\}\times \Z$.  Hence the action $BS(1,2)\curvearrowright T'(\Z_2)$ is simply the standard action of $BS(1,2)$ on the line by translations.   If $\frak a=(0)$, then $l=1$ and $q=2$, so the vertices of $T(0)$ are identified with $\Q_2\times \Z$ up to the equivalence relation $\sim$.  This results in the main Bass-Serre tree of $BS(1,2)$ with the standard action, as shown on the left of Figure \ref{fig:bstrees}.   % The action $BS(1,2)\curvearrowright T'(0)$ is the main Bass-Serre tree of $BS(1,2)$ and is shown on the left of Figure \ref{fig:bstrees}. 
 In the figure, vertices are labeled by 2--adic numbers. Heights are implicit in the figure, with vertices at the same height in the figure having the same height in $\Z$. The height $h=0$ is illustrated with a dotted line. The generator $t$ acts loxodromically, shifting each vertex directly upward in the figure. For example, consider the vertex $(1.1,1)\in T'(0)$.  We have $t(1.1,1)=(2\cdot 1.1, 1+1)=(11,2)$.  The generator $a$ acts elliptically, where the action is via a 2--adic odometer $x\mapsto x+1$ on $\Q_2$.
 \end{ex}
 
\begin{center}
\begin{figure}[h]

\begin{tabular}{l l}

\begin{overpic}[width=0.5\textwidth,percent]{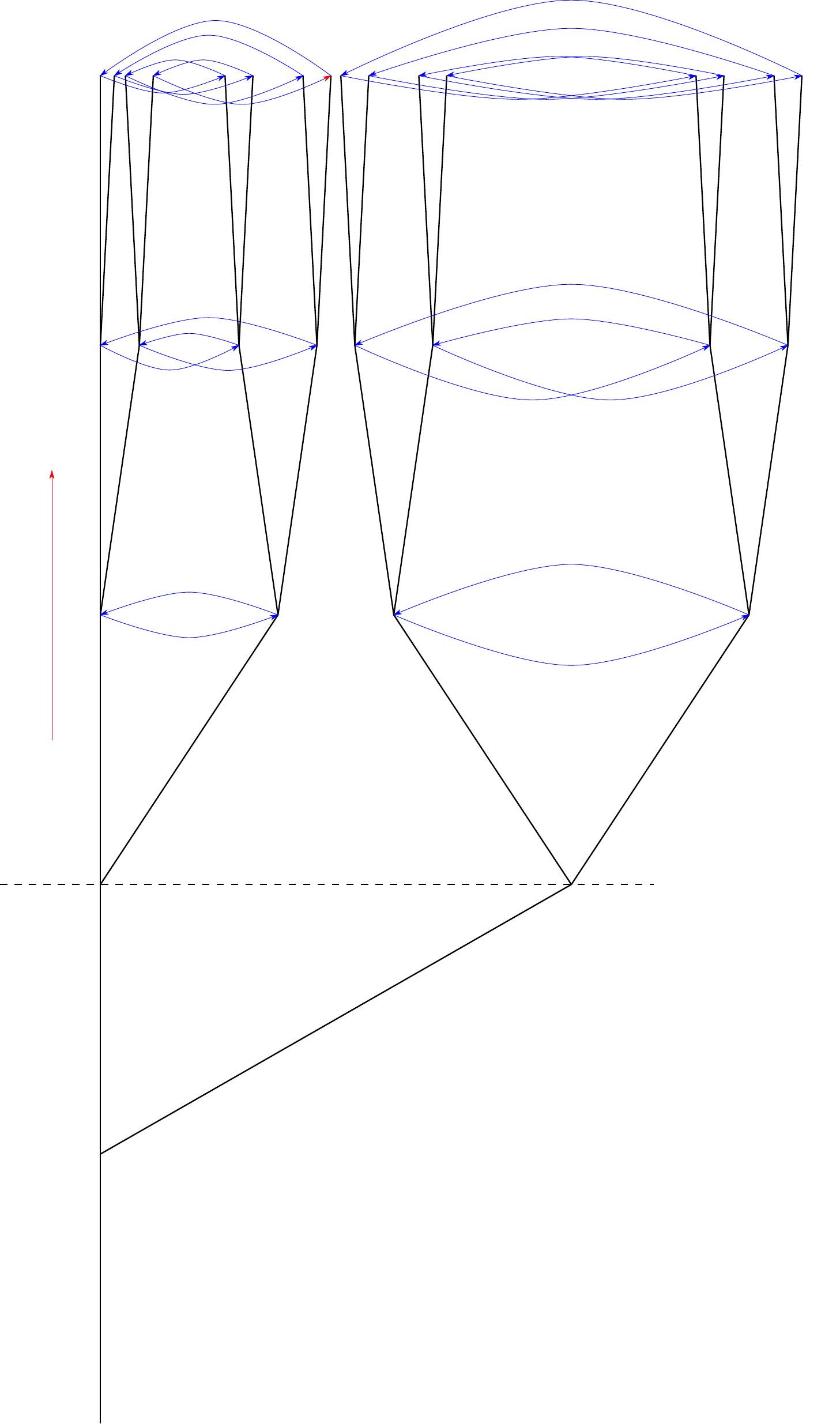}
\put (1,60) {\color{red} $t$}
\put (40,62) {\color{blue} $a$}

\put (10,19) {\scriptsize $0$}
\put (10,38) {\scriptsize $0$}
\put (42,38) {\scriptsize $0.1$}
\put (10,57) {\scriptsize $0$}
\put (21,57) {\scriptsize $1$}
\put (30,57) {\scriptsize $0.1$}
\put (54,57) {\scriptsize $1.1$}
\put (8,75) {\scriptsize $0$}
\put (11,75) {\scriptsize $10$}
\put (18,75) {\scriptsize $1$}
\put (22.5,75) {\scriptsize $11$}
\put (26,75) {\scriptsize $0.1$}
\put (31,75) {\scriptsize $10.1$}
\put (51,75) {\scriptsize $1.1$}
\put (56,75) {\scriptsize $11.1$}
\put (7,97) {\tiny $0$}
\put (8,95.5) {\tiny $100$}
\put (9,97) {\tiny $10$}
\put (11,95.5) {\tiny $110$}
\put (15,97) {\tiny $1$}
\put (17,95.5) {\tiny $101$}
\put (21,97) {\tiny $11$}
\put (22.5,95.5) {\tiny $111$}
\put (24,97) {\tiny $0.1$}
\put (26,95.5) {\tiny $100.1$}
\put (28.5,97) {\tiny $10.1$}
\put (31.5,95.5) {\tiny $110.1$}
\put (48,97) {\tiny $1.1$}
\put (50.5,95.5) {\tiny $101.1$}
\put (54,97) {\tiny $11.1$}
\put (56,95.5) {\tiny $111.1$}
\end{overpic}

&

\begin{overpic}[width=0.5\textwidth,percent]{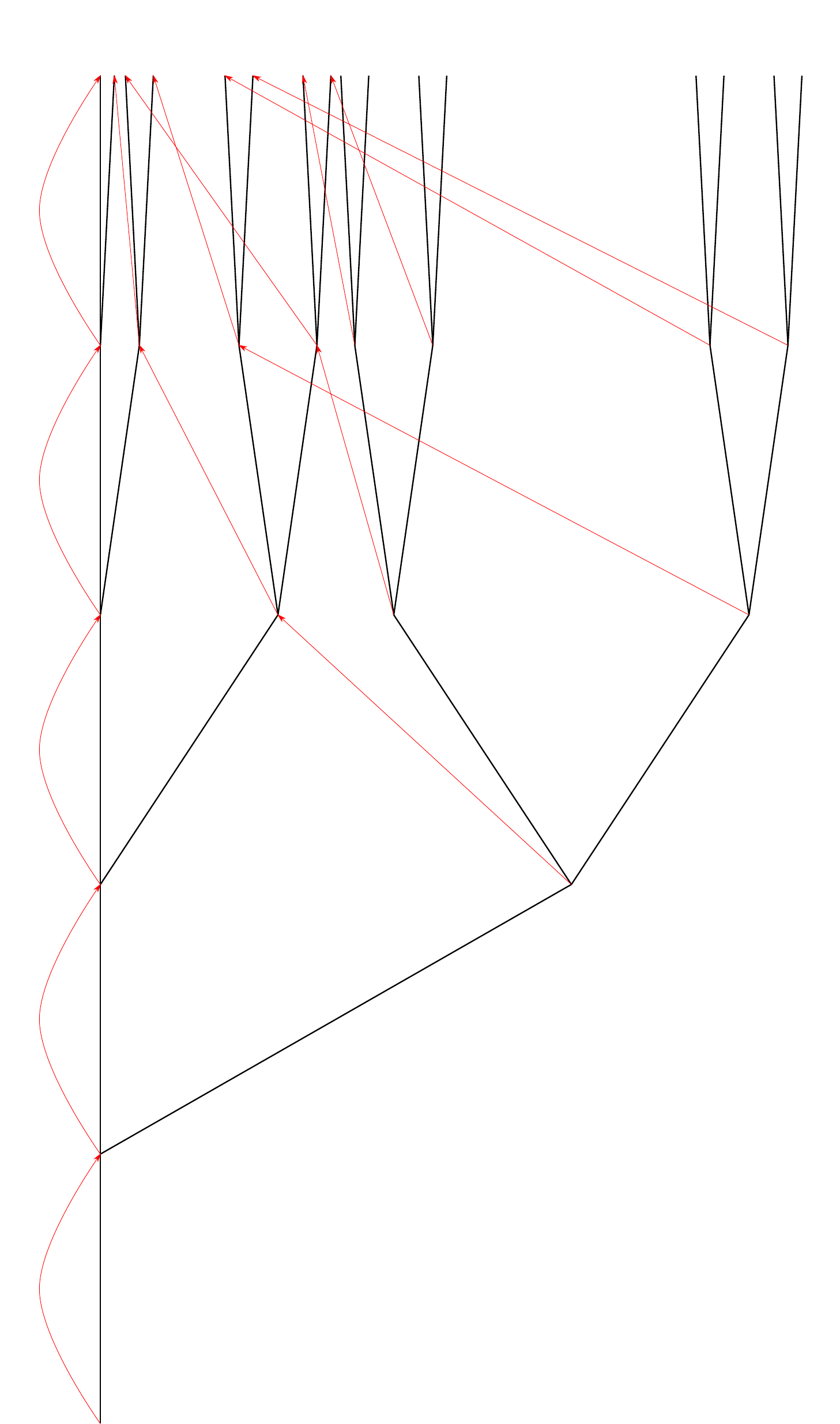}
\put (1,60) {\color{red} $t$}

\put (10,19) {\scriptsize $0$}
\put (10,38) {\scriptsize $0$}
\put (42,38) {\scriptsize $0.1$}
\put (10,57) {\scriptsize $0$}
\put (21,57) {\scriptsize $1$}
\put (30,57) {\scriptsize $0.1$}
\put (54,57) {\scriptsize $1.1$}
\put (8,75) {\scriptsize $0$}
\put (11,75) {\scriptsize $10$}
\put (18,75) {\scriptsize $1$}
\put (22.5,75) {\scriptsize $11$}
\put (26,75) {\scriptsize $0.1$}
\put (31,75) {\scriptsize $10.1$}
\put (51,75) {\scriptsize $1.1$}
\put (56,75) {\scriptsize $11.1$}
\put (7,97) {\tiny $0$}
\put (8,95.5) {\tiny $100$}
\put (9,97) {\tiny $10$}
\put (11,95.5) {\tiny $110$}
\put (15,97) {\tiny $1$}
\put (17,95.5) {\tiny $101$}
\put (21,97) {\tiny $11$}
\put (22.5,95.5) {\tiny $111$}
\put (24,97) {\tiny $0.1$}
\put (26,95.5) {\tiny $100.1$}
\put (28.5,97) {\tiny $10.1$}
\put (31.5,95.5) {\tiny $110.1$}
\put (48,97) {\tiny $1.1$}
\put (50.5,95.5) {\tiny $101.1$}
\put (54,97) {\tiny $11.1$}
\put (56,95.5) {\tiny $111.1$}
\end{overpic}

\end{tabular}
\caption{The action $BS(1,2)\curvearrowright T'(0)$ (left) and the action $BS(1,6)\curvearrowright T'(\Z_2\times 0)$ (right).}
\label{fig:bstrees}
\end{figure}

\end{center}

\begin{ex}
Now consider the group $BS(1,6)$.  The ring $\Z_6$ has four full ideals: $(0)$, $\Z_2\times (0)$, $(0)\times \Z_3$, and $\Z_2\times \Z_3$. The trees $T'(0)$ and $T'(\Z_2\times \Z_3)$ are the main Bass-Serre tree and the standard action on the line, respectively, as in the previous example. We will describe the action $BS(1,6)\curvearrowright T'((0)\times \Z_3)$. The action $BS(1,6)\curvearrowright T'(\Z_2\times (0))$ may be described in a similar way.

If $\frak a=(0)\times \Z_3$, then $l=3$ and $q=2$, hence the vertex set of the tree $T'((0)\times \Z_3)$ is identified with $\Q_2\times\Z$ up to the equivalence relation $\sim$.  In particular, the tree $T'((0)\times \Z_3)$ for $BS(1,6)$ is isomorphic to the main Bass-Serre tree for $BS(1,2)$.  However, the action is different. The generator $a$ is still elliptic, acting as a 2--adic odometer on $\Q_2$, just as it did for $BS(1,2)$. The generator $t$ is also still loxodromic.   However, $t$ has a more complicated action than simply ``shifting vertices directly upward.'' This is because $t$ now acts as multiplication by 6 on $\Q_2$ while simultaneously increasing heights by 1, while in the previous example it acted as multiplication by $2$ on $\Q_2$ while increasing heights by $1$.  For example, consider the same vertex $(1.1,1)$ as in the previous example, but now as a vertex of $T'((0)\times \Z_3)$.  Recall that in the previous example, applying $t\in BS(1,2)$ sent $(1.1,1)$ to $(11,2)$.  Here, applying $t\in BS(1,6)$ yields 
\[
t(1.1,1)=(6\cdot 1.1,1+1)=(1001,2)\sim (1,2).
\]
The action of $t$ on certain vertices of the tree is illustrated via red arrows in Figure~\ref{fig:bstrees}.
\end{ex}

\begin{prop}
The trees $T(\mathfrak a)$ and $T'(\mathfrak a)$ are $BS(1,n)$--equivariantly isomorphic. 
\end{prop}

\begin{proof}
This is a standard exercise in Bass-Serre theory. The group $BS(1,n)$ acts on $T'(\mathfrak a)$ with a single orbit of vertices and a single orbit of edges. Note that the stabilizer of any vertex is contained in $\Z\left[\frac{1}{n}\right]\trianglelefteq BS(1,n)$ since $t$ acts loxodomically.  Moreover, $\Z\left[\frac{1}{n}\right]$ permutes the vertices at any given height in the tree. For $y\in \Z\left[\frac{1}{n}\right]$ the action on vertices at height $h$ is given explicitly by $(x,h)\mapsto (x+y,h)$. Hence $y\in \Z\left[\frac{1}{n}\right]$ fixes (the equivalence class of) the vertex $(x,h)$ if and only if $||y||_q\leq q^{-h}$. The additive subgroup of $\Z\left[\frac{1}{n}\right]$ consisting of elements of $q$--adic absolute value at most $q^{-h}$ is given exactly by $q^h \Z\left[\frac{1}{l}\right]=n^h\Z\left[\frac{1}{l}\right]$. Choosing a vertex at height 0 and an adjacent vertex at height 1, the vertex stabilizers are $\Z\left[\frac{1}{l}\right]$ and $n\Z\left[\frac{1}{l}\right]$, respectively. Thus, the quotient graph of groups for $BS(1,n)$ has a single vertex group $\Z\left[\frac{1}{l}\right]$. The edge group embeds isomorphically into $\Z\left[\frac{1}{l}\right]$ on one end and embeds as the subgroup $n\Z\left[\frac{1}{l}\right]$ on the other end. This is the same as the graph of groups defining $T(\mathfrak a)$, so the two trees are equivariantly isomorphic.
\end{proof}

\subsection{Subsets confining under the action of $\alpha^{-1}$}

Let $Q^-$ be the subset defined in (\ref{eqn:Q-}).

\begin{lem} \label{lem:alpha^-1largest}
Let $Q\subseteq H$ be confining under the action of $\alpha^{-1}$. Then $\alpha^{-k}(Q^-)\subseteq Q$ for some $k\geq 0$.
\end{lem}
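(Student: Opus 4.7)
The plan is to adapt the cascading-induction strategy from the proof of Lemma \ref{lem:Q+subset} to the $\alpha^{-1}$-confining setting. First, I would establish the $\alpha^{-1}$-analog of Lemma \ref{lem:elements}: condition (b) applied to $1 \in H$ yields $N_0 \geq 0$ with $\pm 1/n^{N_0} \in Q$ (using symmetry of $Q$), so taking $S = \{\pm 1\}$ enlarges $Q$ to $\overline Q := Q \cup \{\pm 1/n^i : i \geq 0\}$, which remains $\alpha^{-1}$-confining (with updated constant $k_0' := N_0 + k_0$), satisfies $[\overline Q \cup \{t^{\pm 1}\}] = [Q \cup \{t^{\pm 1}\}]$, and obeys $\alpha^{-N_0}(\overline Q) \subseteq Q$. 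Hence it suffices to find $k'$ with $\alpha^{-k'}(Q^-) \subseteq \overline Q$, whereupon $k = k' + N_0$ will satisfy the lemma.

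Next, fix $k_1 \geq k_0'$ and define $B_l := \{j/n^{(l+1)k_1} : j \in \Z,\ |j| < n^{(l+1)k_1}\} \subseteq Q^-$, noting that $Q^- = \bigcup_l B_l$ (padding with trailing zeros when needed). The central claim, proved by induction on $l$, is that every $y \in B_l$ can be written as $y = \sum_{i=1}^{n^{k_1}} g_i$ with $g_i \in \overline Q$. The base case $l = 0$ writes $y = j/n^{k_1}$ as a sum of $|j| < n^{k_1}$ copies of $\mathrm{sign}(j)/n^{k_1} \in \overline Q$ padded by zeros. For the inductive step, given $y = j/n^{(l+2)k_1} \in B_{l+1}$, decompose $j = a \cdot n^{(l+1)k_1} + r$ with $|r| < n^{(l+1)k_1}$ and $|a| \leq n^{k_1}$, giving $y = \alpha^{-k_1}(r/n^{(l+1)k_1} + a)$. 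Apply the inductive hypothesis to obtain $r/n^{(l+1)k_1} = \sum_{i=1}^{n^{k_1}} g_i'$ with $g_i' \in \overline Q$, and distribute $a$ as $a = \sum_{i=1}^{n^{k_1}} \mathrm{adj}_i$ with $\mathrm{adj}_i \in \{0, \pm 1\} \subseteq \overline Q$. Then $y = \sum_{i=1}^{n^{k_1}} (g_i' + \mathrm{adj}_i)/n^{k_1}$, and each summand lies in $\overline Q$ by first applying condition (c) for $\overline Q$ to bring $g_i' + \mathrm{adj}_i \in \overline Q + \overline Q$ into $\overline Q$ after $\alpha^{-k_0'}$, then using condition (a) for $\overline Q$ to absorb the remaining power $k_1 - k_0' \geq 0$.

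Having shown every $y \in Q^-$ is a sum of at most $n^{k_1}$ elements of $\overline Q$, I would iterate condition (c) for $\overline Q$ via binary doubling to find a uniform constant $M$ (depending only on $n$, $k_1$, $k_0'$) with $\alpha^{-M}(y) \in \overline Q$ for every $y \in Q^-$. Combining with $\alpha^{-N_0}(\overline Q) \subseteq Q$ yields $\alpha^{-k}(Q^-) \subseteq Q$ for $k = M + N_0$.

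The main obstacle is the precise bookkeeping in the inductive step, ensuring that the number of summands stays exactly $n^{k_1}$ at every stage. This is the natural dual of the absorption trick in Lemma \ref{lem:Q+subset}: there one combines $g_i + 1$ into a single element of $\overline Q$ by applying $\alpha^{k_1}$ (leveraging $\alpha^{k_0}(Q+Q) \subseteq Q$ together with $\alpha(\overline Q) \subseteq \overline Q$); here one combines $g_i' + \mathrm{adj}_i$ into a single element of $\overline Q$ by applying $\alpha^{-k_1}$ (leveraging $\alpha^{-k_0'}(\overline Q + \overline Q) \subseteq \overline Q$ together with $\alpha^{-1}(\overline Q) \subseteq \overline Q$), provided $k_1 \geq k_0'$.
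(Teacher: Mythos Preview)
Your plan is correct. The dualization of Lemma~\ref{lem:elements} goes through (note that $0\in Q$ automatically since $0\in\alpha^k(Q)$ for some $k$ forces $0\in Q$, so your ``padding by zeros'' in the base case is legitimate), the Euclidean decomposition $j=a\cdot n^{(l+1)k_1}+r$ does give $|a|\le n^{k_1}$ and $r/n^{(l+1)k_1}\in B_l$, and the pairing $g_i'+\mathrm{adj}_i$ keeps the count of summands at exactly $n^{k_1}$ as required. The final binary-doubling step is routine.

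Your route, however, differs from the paper's. You argue by direct analogy with Lemma~\ref{lem:Q+subset}: enlarge $Q$ to contain all $\pm n^{-i}$, then run a cascading induction that maintains a \emph{uniform bound} on the number of $\overline Q$-summands needed to express any element of $Q^-$. The paper instead never enlarges $Q$ and uses a \emph{sparse-digit} trick: it first proves (by a one-line induction) that any number whose base-$n$ expansion has nonzero digits spaced at least $k_0$ apart after an initial run of zeros already lies in $Q$, and then observes that an arbitrary element of $\alpha^{-(k_1+k_0-1)}(Q^-)$ decomposes as a sum of exactly $k_0$ such sparse numbers (take every $k_0$-th digit). One more application of the confining condition then finishes. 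The paper's argument is slightly shorter and avoids the auxiliary $\overline Q$, while yours makes the symmetry with the $\alpha$-case completely transparent and reuses the existing machinery rather than inventing a new combinatorial device.
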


\begin{proof}
Let $k_0\in \Z_{\geq 0}$ be large enough that $\alpha^{-k_0}(Q+Q)\subseteq Q$. We may suppose that $k_0>0$, for otherwise $Q$ is a subgroup of $H$ and it is easy to check that in fact $Q=H$. Since $H=\bigcup_{k\in \Z_{\geq 0}} \alpha^k(Q)$, we may also choose $k_1$ large enough that $\alpha^{-k_1}(a)=an^{-k_1}\in Q$ for any $a\in \{0,\ldots, n-1\}$.

We claim that any number of the form \[\sum_{i=0}^r a_i n^{-k_1-(i+1)k_0} \text{ with } a_i \in \{0,\ldots,n-1\} \text{ for all } 0\leq i\leq r\] (for any $r\geq 0$) lies in $Q$. In other words, any number \[0.\underbrace{0\cdots \cdots \cdots0}_{k_1+k_0-1 \text{ times}} a_0 \underbrace{0\cdots \cdots 0}_{k_0 -1\text{ times}} a_1 \underbrace{0\cdots \cdots 0}_{k_0-1 \text{ times}}a_2 \underbrace{0\cdots \cdots 0}_{k_0-1 \text{ times}} \cdots a_r\] between 0 and 1 which may be written in base $n$ with
\begin{itemize}
\item $k_1+k_0-1$ 0's after the decimal point and then
\item $k_0-1$ 0's between any consecutive potentially nonzero digits
\end{itemize}
\noindent lies in $Q$. We will prove this by induction on $r$. The base case, when $r=0$, follows since $a_0 n^{-k_1}\in Q$ for any $a_0 \in \{0,\ldots,n-1\}$ and therefore $\alpha^{-k_0}(a_0 n^{-k_1})=a_0 n^{-k_1-k_0}\in Q$ because $Q$ is closed under the action of $\alpha^{-1}$. Suppose that the claim is true for all $r<s$. Consider a number $x=\sum_{i=0}^s a_i n^{-k_1-(i+1)k_0}$ with $a_i\in \{0,\ldots,n-1\}$ for all $i$. We may write \[x=a_0n^{-k_1-k_0}+\sum_{i=1}^s a_in^{-k_1-(i+1)k_0}.\] We have that $a_0n^{-k_1}\in Q$ and by induction, \[\sum_{i=1}^s a_i n^{-k_1-ik_0}=\sum_{i=0}^{s-1} a_{i+1}n^{-k_1-(i+1)k_0} \in Q.\] Hence, \[x= a_0n^{-k_1-k_0}+\sum_{i=1}^s a_in^{-k_1-(i+1)k_0}=\alpha^{-k_0}\left(a_0n^{-k_1}+\sum_{i=1}^s a_in^{-k_1-ik_0}\right)\in Q.\] This proves the claim.

Now consider a number of the form $x=\sum_{i=0}^r a_i n^{-k_1-k_0-i}$. In other words, \[x=0.\underbrace{0\cdots\cdots\cdots 0}_{k_1 +k_0-1 \text{ times}}a_0 a_1 \cdots a_r.\] We may write \[x=\sum_{j=0}^{k_0-1} \sum_{\substack{ i\in \Z_{\geq 0} \\ j+ik_0\leq r}} a_{j+ik_0} n^{-(k_1+(i+1)k_0+j)}=\sum_{j=0}^{k_0-1} x_j,\] where \[x_j=\sum_{\substack{ i\in \Z_{\geq 0} \\ j+ik_0\leq r}} a_{j+ik_0} n^{-(k_1+(i+1)k_0+j)}.\] In other words, we are writing \[\begin{tabular}{l l l l l l l l l l l l l l l l l}
$x$ & $=$ & 0& $.$ & 0& $\cdots$ & 0 & $a_0$ &0 & 0 & $\cdots$ & 0 & 0 & $a_{k_0}$ & 0 &0 & $\cdots$ \\
& $+$ & 0& $.$ & 0& $\cdots$&  0 & 0 & $a_{1}$  &0 & $\cdots$ & 0 &  0& 0 & $a_{1+k_0}$&0 & $\cdots$ \\
& $+$ & 0 & $.$ & 0 & $\cdots$ & 0 & 0 & 0 & $a_{2}$ & $\cdots$ & 0 & 0 & 0 & 0 & $a_{2+k_0}$ & $\cdots$ \\ 
& $+$ & $\cdots$ & &  & &  & & & & & & & & & & \\
& $+$ & 0 & $.$ & 0 & $\cdots$ & 0 & 0 & 0 & 0 & $\cdots$ & 0 & $a_{(k_0-1)}$ &0 & 0 & 0 & $\cdots$ \\
&\end{tabular}\] with the summands being $x_0,x_1,x_2,\ldots,$ and $x_{k_0-1}$, respectively. Notice that \[x_j=\sum_{\substack{i \in \Z_{\geq 0} \\j+ik_0\leq r}} a_{j+ik_0} n^{-(k_1+(i+1)k_0+j)}=\alpha^{-j}\left(\sum_{\substack{i \in \Z_{\geq 0} \\j+ik_0\leq r}} a_{j+ik_0} n^{-(k_1+(i+1)k_0)}\right)=\alpha^{-j}(y_j),\] where \[y_j=\sum_{\substack{i \in \Z_{\geq 0} \\j+ik_0\leq r}} a_{j+ik_0} n^{-(k_1+(i+1)k_0)}\] and, by our claim, $y_j\in Q$ for each $j\in \{0,\ldots,k_0-1\}$. Since $Q$ is closed under the action of $\alpha^{-1}$, we have $x_j\in Q$ for each $j$. Therefore $x\in Q^{k_0}$. Using that $\alpha^{-k_0}(Q+Q)\subseteq Q$, we have $\alpha^{-k_0^2}(x)=\alpha^{-k_0 \cdot k_0}(x)\in Q$.

Finally, for any \textit{positive} number $y=\sum_{i=0}^r a_i n^{-i-1} \in Q^-$, we have \[\alpha^{-k_1-k_0+1}(y)=\sum_{i=0}^r a_i n^{-k_1-k_0-i}.\] By our work in the last paragraph, this proves that \[\alpha^{-k_0^2}(\alpha^{-k_1-k_0+1}(y))\in Q.\] In other words, $\alpha^{-k_0^2-k_0-k_1+1}(y)\in Q$.

We have shown that $\alpha^{-k}(y)\in Q$ for any \textit{positive} $y\in Q^-$ where $k=k_0^2+k_0+k_1-1$. Since $Q^-$ and $Q$ are symmetric, this proves that for any \textit{negative} $y\in Q^-$ we have $\alpha^{-k}(y)=-\alpha^{-k}(-y)\in Q$. In other words, $\alpha^{-k}(Q^-)\subset Q$ where $k=k_0^2+k_0+k_1-1$.
\end{proof}

\begin{prop} \label{prop:singlealpha^-1}
Let $Q\subseteq H$ be strictly confining under the action of $\alpha^{-1}$. Then $[Q\cup\{t^{\pm 1}\}]=[Q^-\cup\{t^{\pm 1}\}]$.
\end{prop}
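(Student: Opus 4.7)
My plan is to prove both directions of the equivalence $[Q \cup \{t^{\pm 1}\}] = [Q^- \cup \{t^{\pm 1}\}]$. The direction $[Q \cup \{t^{\pm 1}\}] \preccurlyeq [Q^- \cup \{t^{\pm 1}\}]$ follows directly from Lemma~\ref{lem:alpha^-1largest}: applied to $Q$, it yields $k \geq 0$ with $\alpha^{-k}(Q^-) \subseteq Q$. Any $q \in Q^-$ then satisfies $q = t^k \cdot \alpha^{-k}(q) \cdot t^{-k}$ with $\alpha^{-k}(q) \in Q$, giving $\|q\|_{Q \cup \{t^{\pm 1}\}} \leq 2k+1$ uniformly in $q \in Q^-$.

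For the reverse direction $[Q^- \cup \{t^{\pm 1}\}] \preccurlyeq [Q \cup \{t^{\pm 1}\}]$, I must show that $\sup_{g \in Q} \|g\|_{Q^- \cup \{t^{\pm 1}\}} < \infty$. Given $g \in Q$ with base-$n$ expansion $g = \pm g_r \cdots g_0 . g_{-1} \cdots g_{-m}$ and leading nonzero digit $g_r$ (so $n^r \leq |g| < n^{r+1}$, where $|\cdot|$ denotes the real absolute value via $H \subset \mathbb{R}$), the element $\alpha^{-(r+1)}(g) = g/n^{r+1}$ lies in $H$ with absolute value strictly less than $1$, hence in $Q^-$. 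Thus $g = t^{r+1}(g/n^{r+1}) t^{-(r+1)}$ is a word of length $2(r+1) + 1$ in $Q^- \cup \{t^{\pm 1}\}$. Uniform boundedness therefore reduces to showing $\sup_{g \in Q}|g| < \infty$.

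The main obstacle is this boundedness claim, and it is where the strict confining hypothesis is essential. The conditions $\alpha^{-1}(Q) \subsetneq Q$, symmetry, $\alpha^{-k_0}(Q + Q) \subseteq Q$, and $\bigcup_k \alpha^k(Q) = H$ should together force $Q \subseteq \{h \in H : |h| \leq n^R\}$ for some finite $R$. Heuristically, were $Q$ to contain elements of unboundedly growing absolute values, sums of pairs of such elements at mismatched scales would produce new elements whose sizes, after division by the fixed power $n^{k_0}$, could not be absorbed back into $Q$---mirroring the failure of condition (c) for naive unbounded candidates such as $\{|h| \leq M\} \cup \{\pm n^j M : j \geq 0\}$. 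Formalizing this obstruction using only the given structural hypotheses is the delicate point. Once $|Q| < n^R$ is established, we have $r < R$ in the base-$n$ expansion of every $g \in Q$, yielding the uniform bound $\|g\|_{Q^- \cup \{t^{\pm 1}\}} \leq 2R + 1$ and completing the reverse direction.
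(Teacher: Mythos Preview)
Your reduction is correct: the direction $[Q\cup\{t^{\pm1}\}]\preccurlyeq[Q^-\cup\{t^{\pm1}\}]$ follows from Lemma~\ref{lem:alpha^-1largest}, and the reverse direction is equivalent to showing $\sup_{g\in Q}|g|<\infty$ (using the real absolute value on $H\subset\R$). You have also correctly identified that this boundedness is where the strictness hypothesis must enter. However, you have not actually proved this step; you only offer a heuristic, and that heuristic does not capture the mechanism needed. Your sketch suggests that condition~(c) should \emph{fail} for an unbounded $Q$, but this is backwards: condition~(c) is a hypothesis, and the task is to use it together with $\alpha^{-1}(Q)\subseteq Q$ to force $Q=H$ whenever $Q$ is unbounded (contradicting strictness, since $\alpha^{-1}(H)=H$). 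Exhibiting unbounded candidates that fail~(c) says nothing about unbounded sets that satisfy it.

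The paper fills this gap with a genuinely different idea from your additive ``mismatched scales'' picture. After arranging $Q^-\subseteq Q$ (via Lemmas~\ref{lem:alpha^-1largest} and~\ref{lem:elements}), one assumes there exist $x\in Q$ with arbitrarily large integer part $r$ and shows that $n^t\in Q$ for arbitrarily large $t$; from this it follows easily that $H\subseteq Q$. The key trick is multiplicative: for a fixed $s$, one uses the pigeonhole-type observation that since $n^{r-1}\le x<n^{r+1}$, some integer multiple $cx$ with $0\le c\le n^s$ must land in an interval of the form $[n^{r+s-1},\,n^{r+s-1}+n^{r+1})$, so that its base-$n$ expansion begins $1\underbrace{0\cdots0}_{s-2}\,b_r\cdots$. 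Since $cx\in Q^c$, applying $\alpha^{-ck_0}$ brings it into $Q$; then shifting further by $\alpha^{-1}$ and subtracting the fractional tail (which lies in $Q^-\subseteq Q$) isolates $n^{s-1}\in Q+Q$, hence $n^{s-1-k_0}\in Q$. Since $s$ was arbitrary (and $r$ was chosen large enough to accommodate $c\le n^s$), this yields $n^t\in Q$ for all large $t$. This argument is the missing substance of your proof; without it, the proposal is incomplete.
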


\begin{proof}
Lemma \ref{lem:alpha^-1largest}, we have $[Q\cup \{t^{\pm 1}\}] \preccurlyeq [Q^-\cup \{t^{\pm 1}\}]$. Suppose that the inequality is strict. We will show then that $[Q\cup \{t^{\pm 1}\}]=\left[H\cup \{t^{\pm 1}\}\right]$ and this will contradict that $Q$ is \textit{strictly} confining.

By Lemma \ref{lem:alpha^-1largest} and Lemma \ref{lem:elements}, we may suppose that $Q^-\subseteq Q$. If there exists $k$ large enough that $\alpha^{-k}(x)\in Q^-$ for all $x\in Q$, then we have $[Q\cup\{t^{\pm 1}\}]=[Q^-\cup\{t^{\pm 1}\}]$, as desired.

Otherwise, there exist numbers $x=a_r\cdots a_0.a_{-1} \cdots a_{p(x)}\in Q$ with $r$ arbitrarily large and $a_r\neq 0$. We will prove in this case that $n^t\in Q$ for $t$ arbitrarily large. This will complete the proof, for in this case, for any $a\in \{0,\ldots,n-1\}$ we will also have $an^t\in Q$ for any $t$, by the standard arguments. Hence given a positive number $y=\sum_{i=-p}^q a_i n^i\in H$ we have $\sum_{i=-p}^q a_i n^{i+k_0(p+q+1)}\in Q^{p+q+1}$ and therefore \[\alpha^{-(p+q+1)k_0}\left(\sum_{i=-p}^q a_i n^{i+k_0(p+q+1)}\right)=y\in Q.\] Since $Q$ is symmetric, this will prove that $H\subseteq Q$.

Consider a number $x=a_r\cdots a_0.a_{-1} \cdots a_{p(x)}\in Q$ with $r\gg 0$ (how large will be determined later in the proof). We clearly have $n^r \leq x<n^{r+1}$. For any $s>0$ we have \[n^sx=a_r\cdots a_0 \cdots a_{-s}. a_{-s-1}\cdots a_{p(x)}\geq n^{r+s}=1\underbrace{0\ldots\dots 0}_{r+s \text{ times}}\] (with the decimal point to the right of $a_{p(x)}$ if $s>p(x)$). We claim that there exists $c\in \{0,1,\ldots, n^s\}$ such that \[cx=1\underbrace{0\cdots\cdots 0}_{s-2 \text{ times}} b_r b_{r-1} \cdots b_0.b_{-1} \cdots b_{p(cx)}.\]  If this is not the case, then since $n^sx \geq n^{r+s}$ there exists some $c\in\{0,\ldots,n^s\}$ with \[cx<1\underbrace{0\ldots\dots\dots 0}_{r+s-1 \text{ times}}\text{ but } (c+1)x\geq 1\underbrace{0\ldots\dots 0}_{s-3 \text{ times}} 1 \underbrace{0 \ldots\dots 0}_{r+1 \text{ times}}.\] From these inequalities we see that \[x=(c+1)x-cx \geq 1\underbrace{0 \ldots\dots 0}_{r+1 \text{ times}}=n^{r+1}\] which is a contradiction.

So choose $c\in \{0,1,\ldots,n^s\}$ with \[cx=1\underbrace{0\cdots\cdots 0}_{s-2 \text{ times}} b_r b_{r-1} \cdots b_0.b_{-1} \cdots b_{p(cx)}.\] Assuming that $r> n^sk_0$, we have \[\alpha^{-ck_0}(cx)=1\underbrace{0\cdots\cdots 0}_{s-2 \text{ times}} b_r b_{r-1} \cdots b_{ck_0}. b_{ck_0-1}\cdots b_0b_{-1} \cdots b_{p(cx)}\in Q.\] Since $Q$ is closed under the action of $\alpha^{-1}$, we then also have \[1\underbrace{0\cdots\cdots 0}_{s-2 \text{ times}}.b_r b_{r-1} \cdots b_{p(cx)}\in Q.\] Since $Q^-\subseteq Q$, we then have \[10\ldots 0=n^{s-2}\in Q+Q\] and therefore $n^{s-2-k_0}\in Q$. But since $s$ is arbitrarily large, we then have $n^t\in Q$ for $t$ arbitrarily large. This completes the proof.

\end{proof}

\subsubsection{The action on $\hyp^2$}

There is a well-known action of $BS(1,n)$ on $\hyp^2$ defined via the representation \[BS(1,n)\to \operatorname{PSL}(2,\R),\] where \[a \mapsto \begin{pmatrix} 1 & 1 \\ 0 & 1 \end{pmatrix},\quad t\mapsto \begin{pmatrix} n^{1/2} & 0 \\ 0 & n^{-1/2} \end{pmatrix}.\] The restriction of this representation to $H=\langle t^k a t^{-k} \mid k\in \Z\rangle$ is given by \[r\mapsto \begin{pmatrix} 1 & r \\ 0 & 1 \end{pmatrix}\] where we identify $H$ as a subset of $\R$ in the usual way.

\begin{prop} \label{prop:planeaction}
The action of $BS(1,n)$ on $\hyp^2$ is equivalent to the action of $BS(1,n)$ on \\ $\Gamma(BS(1,n),Q^- \cup \{t^{\pm 1}\})$.
\end{prop}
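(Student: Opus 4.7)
The plan is to apply the Schwarz--Milnor lemma \cite[Lemma~3.11]{ABO} to the $\hyp^2$ action and then verify that the resulting generating set is equivalent to $Q^- \cup \{t^{\pm 1}\}$. First I would check that the action $BS(1,n) \curvearrowright \hyp^2$ is cobounded: with basepoint $i$ in the upper half-plane, the orbit of $i$ consists of all points of the form $n^k i + r$ for $k \in \Z$ and $r \in H = \Z[\tfrac{1}{n}]$, and since the heights $\{n^k\}$ form a logarithmically dense subset of $(0,\infty)$ and $H$ is dense in $\R$, this orbit is coarsely dense in $\hyp^2$. Thus for $R$ sufficiently large, the action on $\hyp^2$ is equivalent to the action on $\Gamma(BS(1,n), S)$, where
\[S = \{g \in BS(1,n) \mid d_{\hyp^2}(i, g \cdot i) \leq R\},\]
and it then suffices to show $[S] = [Q^- \cup \{t^{\pm 1}\}]$. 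For the inequality $[S] \preccurlyeq [Q^- \cup \{t^{\pm 1}\}]$, I would choose $R$ large enough that $Q^- \cup \{t^{\pm 1}\} \subseteq S$; this follows from the elementary computations $d_{\hyp^2}(i, t^{\pm 1} \cdot i) = \log n$ and $d_{\hyp^2}(i, r \cdot i) = \cosh^{-1}(1 + r^2/2) < \cosh^{-1}(3/2)$ for $r \in Q^-$ (since $|r| < 1$).

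For the reverse inequality, I would write an arbitrary $g \in S$ as $g = r t^k$ with $r \in H$ and $k \in \Z$ using the semidirect product structure. The upper half-plane distance formula gives
\[\cosh d_{\hyp^2}(i, rt^k \cdot i) = 1 + \frac{r^2 + (1 - n^k)^2}{2 n^k},\]
since $rt^k \cdot i = n^k i + r$. Combined with the bound $d_{\hyp^2}(i, g \cdot i) \leq R$, this forces (via the vertical contribution $(1-n^k)^2/(2 n^k)$) that $n^k$ lies in a bounded interval around $1$, and hence that $|k|$ is bounded by a constant depending only on $R$; it then forces $|r|$ to be bounded as well. Writing $r = m + f$ as the sum of its integer and fractional parts in base $n$, we obtain $m \in \Z$ with $|m|$ bounded and $f \in Q^-$. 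Since $\pm 1 = t (\pm 1/n) t^{-1}$ has word length $3$ in $Q^- \cup \{t^{\pm 1}\}$, the integer $m$ has uniformly bounded word length, and therefore so does $g = (m + f) t^k$. This yields $[Q^- \cup \{t^{\pm 1}\}] \preccurlyeq [S]$.

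The hard part will be the distance-formula analysis, in particular the fact that $|k|$ must be bounded in both directions. For $k \to +\infty$ the point $n^k i + r$ flies upward and away from $i$, so this direction is intuitive. For $k \to -\infty$, however, $|r|$ can be allowed to shrink as $n^k \to 0$; the nontrivial observation is that the ``vertical'' contribution $(1 - n^k)^2/(2 n^k)$ then blows up, forcing $|k|$ itself to remain bounded.
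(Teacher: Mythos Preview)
Your proposal is correct and follows essentially the same approach as the paper: apply the Schwarz--Milnor lemma to the orbit of $i$, then use the upper half-plane distance formula to show that any $g=rt^k$ in the resulting generating set $S$ has both $|k|$ and $|r|$ uniformly bounded. The only cosmetic difference is in how you convert a bounded $r\in H$ into a bounded word over $Q^-\cup\{t^{\pm1}\}$: the paper observes that for a uniform $l$ one has $\alpha^{-l}(r)\in Q^-$ and writes $r=t^l s t^{-l}$, whereas you split $r=m+f$ into integer and fractional parts and bound $m$ using $\pm 1 = t(\pm 1/n)t^{-1}$; both arguments are equally short and yield the same conclusion.
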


\begin{proof}
We will apply the Schwarz-Milnor Lemma;  see, e.g.,  \cite[Lemma~3.11]{ABO}.

We consider the upper half-plane model of $\hyp^2$. We show first that the orbit of $i$ under $BS(1,n)$ is $(\log(n)+1)$--dense in $\hyp^2$. For this, note first that the orbit of $i$ under $\langle a \rangle$ is 1--dense in the horocycle $\{z\in \hyp^2\mid \Im(z)=1\}$. This follows easily from the fact that $d(i,ai)=d(i,1+i)<1$. Now, for any $k\in \Z$, $t^k$ takes the horocycle $\{z\in \hyp^2\mid  \Im(z)=1\}$ isometrically to the horocycle $\{z\in \hyp^2 \mid \Im(z)=n^k\}$. Hence the orbit of $i$ is 1--dense in the horocycle $\{z\in \hyp^2 \mid \Im(z)=n^k\}$ for any $k\in \Z$. Moreover, for any $k\in \Z$, the distance between the horocycles $\{z\in \hyp^2 \mid \Im(z)=n^k\}$ and $\{z\in \hyp^2 \mid \Im(z)=n^{k+1}\}$ is exactly $\log(n)$. Hence, any $z\in \hyp^2$  has distance at most $ \log(n)$ from  a horocycle $\{z\in \hyp^2 \mid \Im(z)=n^k\}$ for some $k\in \Z$. By the triangle inequality, $z$ has distance at most $ \log(n)+1$ from some point in the orbit of $i$.

This clearly proves that $\bigcup_{g\in BS(1,n)}g B_{\log(n)+1}(i)=\hyp^2$. Therefore by \cite[Lemma~3.11]{ABO}, the action $BS(1,n)\curvearrowright \hyp^2$ is equivalent to the action $BS(1,n)\curvearrowright \Gamma(BS(1,n),S)$ where \[S=\{g\in BS(1,n) \mid d_{\hyp^2}(i,gi)\leq 2\log(n)+3\}.\] We will prove that $[S\cup \{t^{\pm 1}\}]=[Q^- \cup \{t^{\pm 1}\}]$, which will finish the proof.

Let $g\in S$. We may  write  $g=rt^k$ where $r\in H$ and $k\in \Z$. Observe that $d(i,gi)\geq |k|\log(n)$. Since $g\in S$, we have \[|k|\log(n)\leq 2\log(n)+3,\] and hence \[|k|\leq \frac{2\log(n)+3}{\log(n)}\leq \frac{2\log(2)+3}{\log(2)}<7.\] We have $gi=n^ki+r$, and hence \[d(i,gi)=2\operatorname{arcsinh}\left(\frac{1}{2} \sqrt{\frac{r^2+(n^k-1)^2}{n^k}}\right)\geq 2\operatorname{arcsinh}\left(\frac{1}{2} \sqrt{\frac{r^2}{n^7}}\right).\] This clearly defines an upper bound on $|r|$, and hence there exists a uniform $l>0$ (that is, independent of $r$) such that $|\alpha^{-l}(r)|<1$. For such $l$, we clearly have $\alpha^{-l}(r)\in Q^-$.

To summarize, we have $g=rt^k$, where $|k|<7$ and $\alpha^{-l}(r)\in Q^-$. In other words, there exists $s\in Q^-$ such that \[g=\alpha^l(s)t^k=t^lst^{-l}t^k.\]This proves that $\|g\|_{Q^-\cup \{t^{\pm 1}\}}\leq 2l+|k|+1<2l+8$. Thus, \[[Q^-\cup \{t^{\pm 1}\}]\preccurlyeq [S\cup \{t^{\pm 1}\}].\] On the other hand, any element $s\in Q^-$ has $d(i,si)< 1<2\log(n)+3$, so we automatically have $s\in S$. So $Q^-\cup \{t^{\pm 1}\}\subseteq S\cup \{t^{\pm 1}\}$ and this proves \[[S\cup \{t^{\pm 1}\}] \preccurlyeq [Q^- \cup \{t^{\pm 1}\}]. \qedhere\]
\end{proof}

\section{Hyperbolic structures of $BS(1,n)$}
\subsection{Quasi-parabolic structures} \label{section:qpstruct}

\begin{lem} \label{lem:commutator}
The commutator subgroup of $BS(1,n)$ has index $n-1$ in $H$.
\end{lem}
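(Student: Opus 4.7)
The plan is to compute the commutator subgroup explicitly and then check its index. The key observation is that the commutator subgroup of $BS(1,n)$ is already contained in $H$, so we can reduce the question to a computation inside the abelian group $H=\Z[1/n]$.

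First I would compute a general commutator in $BS(1,n)=H\rtimes_\alpha \Z$. Writing $g=(x,a)$ and $h=(y,b)$ in semidirect product coordinates (with $\alpha$ acting by multiplication by $n$), a direct calculation gives
\[
[g,h]=\bigl((1-n^b)x+(n^a-1)y,\;0\bigr)\in H.
\]
This shows both that $[BS(1,n),BS(1,n)]\subseteq H$, and that every commutator lies in the subgroup $(n-1)H\subseteq H$ (since $n-1$ divides $n^k-1$ for all $k\geq 0$, and for $k<0$ one has $n^k-1=-(n^{-k}-1)/n^{-k}$, which still lies in $(n-1)\Z[1/n]$ because $n$ is a unit in $\Z[1/n]$).

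Next I would prove the reverse containment by exhibiting each $(n-1)z$ as a single commutator: for any $z\in H$,
\[
[t,z] = t z t^{-1}-z = \alpha(z)-z = (n-1)z,
\]
(written additively in $H$). Hence $(n-1)H\subseteq [BS(1,n),BS(1,n)]$, and combined with the previous paragraph we get the equality $[BS(1,n),BS(1,n)]=(n-1)H$.

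Finally, to compute the index I would show $H/(n-1)H\cong \Z/(n-1)\Z$. Since $\gcd(n,n-1)=1$, the element $n$ is a unit modulo $n-1$, so the inclusion $\Z\hookrightarrow \Z[1/n]$ induces an isomorphism $\Z/(n-1)\Z\xrightarrow{\sim} \Z[1/n]/(n-1)\Z[1/n]$. This gives index $n-1$, as claimed. (Alternatively, one can read the index off the abelianization: adding $[a,t]=1$ to the presentation forces $a^{n-1}=1$, so $BS(1,n)^{ab}\cong \Z\oplus \Z/(n-1)\Z$, and the image of $H$ under abelianization is exactly the torsion part $\Z/(n-1)\Z$.) There is no real obstacle here; the only place to be careful is the additive/multiplicative bookkeeping in $H$ when verifying that the commutator formula lands in $(n-1)H$ for negative exponents of $t$.
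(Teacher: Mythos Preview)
Your proof is correct. The paper's own proof is considerably shorter and goes directly through the abelianization: it observes that adding $[a,t]=1$ to the presentation forces $a^{n-1}=1$, so the abelianization map is $BS(1,n)\to \Z\oplus \Z/(n-1)\Z$; the commutator subgroup is the kernel, while $H$ is the kernel of the further projection to $\Z$, and the index statement follows. You actually mention this argument yourself in your parenthetical alternative.

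What you do differently is identify the commutator subgroup explicitly as $(n-1)H$ by computing a general commutator in semidirect-product coordinates, then compute the quotient $H/(n-1)H$ by hand. This is more elementary and gives strictly more information (the exact subgroup, not just its index), at the cost of a bit more bookkeeping---particularly the check that $n^k-1\in (n-1)\Z[1/n]$ for negative $k$, which you handle correctly. The paper's approach is slicker for the purpose at hand since only the index is ever used (to conclude $\rho|_H\equiv 0$ in the next proof), but your route would be the natural one if one needed the commutator subgroup itself later on.
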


\begin{proof}
The abelianization of $BS(1,n)$ is given by the obvious homomorphism \[BS(1,n)=\langle a, t \mid tat^{-1}=a^n\rangle \to \langle a, t\mid [a,t]=1, a=a^n\rangle = \Z \oplus \Z/((n-1)\Z).\] The kernel of this homomorphism is $[BS(1,n),BS(1,n)]$ whereas $H$ is the kernel of the composition \[BS(1,n)\to \Z\oplus \Z/((n-1)\Z) \to \Z.\] The lemma follows easily from this.
\end{proof}

The proof of Proposition \ref{prop:qpconfining} is a modification to the proofs of \cite[Theorems 4.4 \& 4.5]{Bal} and \cite[Proposition~4.5]{CCMT}. We recall the statement for the reader's convenience:

\qpconfining*

\begin{proof}
Given a strictly confining subset $Q$, a quasi-parabolic structure is constructed in \cite[Proposition~4.6]{CCMT}. 

It remains to prove the forward direction. Let $[T]\in\mathcal H_{qp}(BS(1,n))$. Fix a sequence $\mathbf x=(x_n)$ in $\Gamma(G,T)$, let $q_\mathbf x\colon G\to \R$ be the associated quasi-character, and let $\rho\colon G\to\R$ be the Busemann pseudocharacter (see Section \ref{sec:qpar} for definitions). Since $BS(1,n)$ is amenable, $\rho$ is a homomorphism, and  $\rho(h)=0$ for all $h\in H$ by Lemma \ref{lem:commutator}. Moreover, we must have $\rho(t)\neq 0$, else $\rho(g)=0$ for all $g\in BS(1,n)$.

\begin{claim} There exist constants $r_0,n_0\geq 0$ such that $Q=\bigcup_{i=1}^{n_0-1}\alpha^i(B(1,r_0)\cap H)$ is confining under the action of $\alpha$ or $\alpha^{-1}$.
\end{claim}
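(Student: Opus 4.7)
The plan is to verify the three conditions of Definition \ref{def:confining} for $Q$ directly, combining the Busemann pseudocharacter $\rho$ with a uniform shrinking estimate on $H$-element displacements under conjugation by $t$. Since $BS(1,n)$ is amenable, $\rho$ is a homomorphism and vanishes on $[G,G]$; by Lemma \ref{lem:commutator}, $H \cap [G,G]$ has finite index in $H$, so as a homomorphism into $\R$ vanishing on a finite-index subgroup of the abelian group $H$, we obtain $\rho|_H \equiv 0$. Together with $\rho(t) \neq 0$ from the paragraph preceding the claim, by swapping the roles of $\alpha$ and $\alpha^{-1}$ if necessary (which corresponds to the two target cases in the claim), I would assume $\rho(t)$ has the sign making conjugation by $t$ shrink displacements of $H$-elements, and aim to show $Q$ is confining under $\alpha$.

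The central technical ingredient is a uniform shrinking estimate: there exist constants $\lambda \in (0,1)$ and $C > 0$ such that $\|\alpha(h)\|_T \leq \lambda \|h\|_T + C$ for every $h \in H$. By the Schwarz--Milnor lemma (\cite[Lemma~3.11]{ABO}), word length in $T$ is quasi-isometric to orbit displacement in the action $G \curvearrowright X$, so it suffices to prove the analogous bound on orbit displacement. Since $\rho|_H = 0$, each $h \in H$ acts parabolically (or elliptically) fixing $\xi \in \partial X$, so $d_X(x_0, h x_0)$ is controlled by the horocyclic displacement along the horocycle through $x_0$ based at $\xi$. Conjugation by $t$ (translating by $\rho(t)$ along its axis) moves the horocyclic computation to a horocycle whose induced metric is contracted by a factor $e^{-|\rho(t)|} < 1$. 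The additive constant absorbs the Schwarz--Milnor quasi-isometry error and the defect of $q_\mathbf{x}$ as a quasi-character. This is the essential content of the method of \cite[Section~4]{CCMT}.

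With the shrinking estimate in hand, verification is routine: choose $r_0$ large enough that $\lambda r_0 + C \leq r_0$ (so that $\alpha(B(1,r_0)\cap H) \subseteq B(1,r_0)\cap H$) and such that $B(1,r_0)$ generates $G$ (possible by coboundedness), and pick any $n_0 \geq 2$. For (a), $\alpha(Q) = \bigcup_{i=2}^{n_0}\alpha^i(B(1,r_0)\cap H) \subseteq Q$ because $\alpha^{n_0}(B(1,r_0)\cap H) \subseteq \alpha(B(1,r_0)\cap H)$ by iterating the estimate. For (b), given $h \in H$, iterated shrinking yields $\alpha^k(h) \in B(1,r_0)\cap H$ for large $k$, so $\alpha^{k+1}(h) \in \alpha(B(1,r_0)\cap H) \subseteq Q$. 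For (c), $q,q' \in Q \subseteq B(1,r_0)\cap H$ gives $\|q+q'\|_T \leq 2r_0 + O(1)$ by the triangle inequality, and applying $\alpha^{k_0}$ for large enough $k_0$ brings this back into $B(1,r_0)\cap H$, after which one more application of $\alpha$ lands us in $Q$. The main obstacle is the shrinking estimate itself -- translating the abstract vanishing/non-vanishing of $\rho$ into concrete exponential contraction of word length in $\Gamma(G,T)$; this requires care with focal-action hyperbolic geometry and the Schwarz--Milnor identification of orbit and word metrics, and is essentially the content of Proposition~4.5 of \cite{CCMT}.
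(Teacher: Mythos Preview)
Your plan has a genuine gap in its ``central technical ingredient.'' The multiplicative shrinking estimate $\|\alpha(h)\|_T \leq \lambda\|h\|_T + C$ with $\lambda<1$ is false in general. Your justification conflates two metrics: you correctly note that conjugation by $t$ contracts the \emph{horocyclic} displacement by a factor $e^{-|\rho(t)|}$, but $\|h\|_T$ is (quasi\nobreakdash-)comparable to the \emph{hyperbolic} displacement $d_X(x_0,hx_0)$, and for a parabolic isometry the hyperbolic displacement grows only logarithmically in the horocyclic one. Concretely, already for the action on the Bass--Serre tree $\Gamma(G,Q^+\cup\{t^{\pm1}\})$: an element $h=m/n^k\in H$ with $n\nmid m$ has $\|h\|\approx 2k+1$, while $\alpha(h)=m/n^{k-1}$ has $\|\alpha(h)\|\approx 2k-1$. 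The contraction is \emph{additive} (by $2$ per step), so $(1-\lambda)\|h\|_T\leq C+O(1)$ fails for large $\|h\|_T$ whenever $\lambda<1$. Consequently your choice ``pick any $n_0\geq 2$'' does not work; the constant $n_0$ genuinely depends on~$r_0$.

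The paper sidesteps any per-step contraction estimate. It instead uses the Morse-type statement that two $K$\nobreakdash-quasi\nobreakdash-geodesic rays with the same endpoint in $\partial X$ are eventually $r_0$\nobreakdash-close (with $r_0=r_0(K,\delta)$), applied to the rays $(t^{-n})_{n\geq0}$ and $(ht^{-n})_{n\geq0}$. Since $d_T(t^{-n},ht^{-n})=\|\alpha^n(h)\|_T$, this yields directly that $\alpha^n(B(1,r_0)\cap H)\subseteq B(1,r_0)\cap H$ for all $n\geq n_0(r_0)$, and the three confining conditions follow. Your plan can be repaired by replacing the multiplicative estimate with this fellow-traveling argument (or, if you prefer contraction language, by proving and using the correct \emph{additive} estimate $\|\alpha(h)\|_T\leq \|h\|_T - c$ for $\|h\|_T$ large); either way the role of $n_0$ must be tied to $r_0$ rather than chosen freely.
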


\begin{proof}[Proof of Claim]
As $[T]\in\mathcal H_{qp}(BS(1,n))$, the group $BS(1,n)$ fixes a single point $\xi\in\partial\Gamma(BS(1,n),T)$. As $\rho(t)\neq0$, $t$ acts as a hyperbolic isometry of $\Gamma(G,T)$, and thus either $t$ or $t^{-1}$ has $\xi$ as its repelling point. Assume without loss of generality that it is $t$; we will show that $Q$ is strictly confining under the action of $\alpha$. On the other hand, if we assume that $t^{-1}$ has $\xi$ as its repelling point, then an analogous proof will show that $Q$ is strictly confining under the action of $\alpha^{-1}$.

First note that the sequence $(1,t^{-1},t^{-2},\dots)$ defines a $K$-quasi-geodesic ray in $\Gamma(G,T)$ for some $K$, and thus so does the sequence $(g,gt^{-1},gt^{-2},\dots)$ for any $g\in BS(1,n)$. Recall that there is a constant $r_0$, depending only on the hyperbolicity constant of $\Gamma(G,T)$ and $K$ such that any two $K$-quasi-geodesic rays with the same endpoint on $\partial \Gamma(G,T)$ are eventually $r_0$--close to each other. In particular, if $\rho(g)\leq C$, then there is a constant $n_0=n_0(d_T(1,g))$ such that \begin{equation} \label{eqn:distest} d_T(t^{-n},gt^{-n})\leq r_0+C\end{equation} for all $n\geq n_0$.

Fix $n_0=n_0(r_0)$, and define \[Q=\bigcup_{i=0}^{n_0-1}\alpha^i(B(1,r_0)\cap H).\] Choose $r_1$ such that $Q\subseteq B(1,r_1)$. Note that such an $r_1$ exists since for any $i$ and any $h\in B(1,r_0)\cap H$ we have \[\|\alpha^i(h)\|_T=\|t^iht^{-i}\|_T \leq r_0 +2i\|t\|_T.\] Let $n_1=n_0(2r_1)$. 

For any $h\in B(1,r_0)\cap H$, we have $d_T(1,h)\leq r_0$ and $\rho(h)=0$, and so it follows from (\ref{eqn:distest}) that for all $n\geq n_0$, \[d_T(\alpha^n(h),1)=d_T(t^{n}ht^{-n},1)=d_T(ht^{-n},t^{-n})\leq r_0.\]  Therefore, $\alpha^n(h)\in B(1,r_0)\cap H$, and thus for all $n\geq n_0$, \[\alpha^{n}(B(1,r_0)\cap H)\subseteq B(1,r_0)\cap H.\]

We now check the conditions of Definition \ref{def:confining}.
Let $h\in Q$, so that $h\in \alpha^i(B(1,r_0)\cap H)$ for some $0\leq i\leq n_0-1$. If $i<n_0-1$, then $\alpha(h)\in\alpha^{i+1}(B(1,r_0)\cap H)\subseteq Q$. On the other hand, if $i=n_0-1$, then $\alpha(h)\in\alpha^{n_0}(B(1,r_0)\cap H)\subseteq B(1,r_0)\cap H\subseteq Q$. Therefore condition (a) holds.

For any $h\in H$, there is a constant $n_h=n_0(d_T(1,h))$ such that $\alpha^{n_h}(h)\in B(1,r_0)\cap H\subseteq Q$. Therefore $H=\bigcup_{i=0}^\infty \alpha^{-i}(Q)$ and (b) holds.

Finally,  $Q+Q\subseteq B(1,2r_1)$, and so $\alpha^{n_1}(Q+Q)\subseteq B(1,r_0)\cap H\subseteq Q$, and (c) holds with constant $n_1$. 

Therefore, $Q$ is confining under the action of $\alpha$, concluding the proof of the claim.\end{proof}

Let $S=Q\cup\{t^{\pm 1}\}$. We will show that the map $\iota\colon (BS(1,n),d_S)\to (BS(1,n),d_T)$ is a quasi-isometry. Since $\sup_{s\in S}d_T(1,s)<\infty$, the map $\iota$ is Lipschitz. Thus it suffices to show that for any bounded subset $B\subseteq \Gamma(BS(1,n),T)$, we have $\sup_{b\in B}d_S(1,b)<\infty$. Fix any $M>0$ and let $B\subseteq \Gamma(BS(1,n),T)$ be such that $d_T(1,b)\leq M$ for all $b\in B$. For each $b\in B$, we have $b=ht^k$ for some $h\in H$ and some $k$.

% First, notice that $d_X(1,h)\leq M$, and so, as in the proof of the claim, there exists a uniform constant $N=N(M)$ such that $t^Nht^{-N}=\alpha^N(h)\in Q$. Therefore, $d_S(1,h)\leq 2N+1$. 

By the definition of $q_\mathbf x$ (see (\ref{eqn:qchar})), we have $q_\mathbf x(g)\leq d_T(1,g)$, and since there exists a constant $D$ (the defect of $q_\mathbf x$) such that $|\rho(g)-q_\mathbf x(g)|\leq D$, we have \[-d_T(1,g)-D\leq \rho(g)\leq d_T(1,g)+D.\]  Consequently $\rho$ maps bounded subsets of $\Gamma(G,T)$ to bounded subsets of $\R$. Since $\rho(ht^n)=\rho(h)+n\rho(t)=n\rho(t)$, for all $n$, it follows that there is a constant $K$ such that for any $b$ with $d_T(1,b)=d_T(1,ht^k)\leq M$ we have $k\leq K$. This implies that \[d_T(1,h)\leq d_T(1,ht^k)+kd_T(1,t)\leq M+Kd_T(1,t).\] Hence by (\ref{eqn:distest}), there is a uniform $N$ such that $\alpha^N(h)\in Q$ and therefore $d_S(1,h)\leq 2N+1$. Therefore, \[d_S(1,b)=d_S(1,ht^k)\leq d_S(1,h)+k\leq 2N+1+K.\]

Since the map $\iota$ is the identity map on vertices, it is clearly surjective, and therefore it is a quasi-isometry.

Finally, $Q$ is strictly confining under the action of $\alpha$.  Indeed, if $Q$ is confining but not strictly confining under the action of $\alpha$, then $[Q\cup\{t^{\pm1}\}]=[T]\in\mathcal H_\ell(BS(1,n))$, which is a contradiction.
\end{proof}

Recall that given an ideal $\frak a$ of $\Z_n$, there is an associated subset $\mathcal C(\frak a)\subseteq H$ defined in (\ref{eqn:subsetS}) which is confining under the action of $\alpha$. Recall also that $\mathfrak a=\mathfrak a_1\times\cdots \mathfrak a_k$ is \emph{full} if $\mathfrak a_j$ is either $(0)$ or $\Z_{p_j}$ for every $j=1,\dots, k$ (see Definition \ref{def:fullideal}).

\begin{lem} \label{lem:injective}
Let $\frak a,\frak b$ be full ideals of $\Z_n$ such that $\frak a\not\leq\frak b$ and $\frak b\not\leq\frak a$. Then $[\mathcal C(\frak a)\cup\{t^{\pm 1}\}]$ and  $[\mathcal C(\frak b)\cup\{t^{\pm1}\}]$ are incomparable.
\end{lem}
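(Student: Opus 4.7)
The plan is to prove incomparability directly by refuting both $[\mathcal{S}(\mathfrak{a}) \cup \{t^{\pm 1}\}] \preccurlyeq [\mathcal{S}(\mathfrak{b}) \cup \{t^{\pm 1}\}]$ and its reverse. For each direction, it suffices to exhibit a sequence of elements of one generating set whose word lengths in the other tend to infinity. The two arguments are symmetric, so I describe one in full detail.

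Since $\mathfrak{b} \not\leq \mathfrak{a}$ and both ideals are full, there must exist an index $j$ with $\mathfrak{a}_j = (0)$ and $\mathfrak{b}_j = \mathbb{Z}_{p_j^{n_j}}$. Set $p = p_j$, $e = n_j$, and $m = n/p^e = \prod_{i \neq j} p_i^{n_i}$, which is coprime to $p$. The natural candidates are $y_k := 1/p^{ke} = m^k/n^k \in H$ for $k \geq 1$. First I would verify $y_k \in \mathcal{S}(\mathfrak{b})$: written as $m^k/n^k$, the fractional-part digits of $y_k$ realize the integer $m^k \in \{0, \ldots, n^k - 1\}$ as a tail, so membership amounts to $m^k \in \phi_k(\mathfrak{b})$. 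Using the isomorphism $\mathbb{Z}_n \cong \prod_i \mathbb{Z}_{p_i^{n_i}}$ together with Lemma~\ref{lem:partialsums}, the image $\phi_k(\mathfrak{b}) \subseteq \mathbb{Z}/n^k\mathbb{Z}$ is exactly the set of residues divisible by $p_i^{k n_i}$ for every $i \in J_b := \{i : \mathfrak{b}_i = (0)\}$. Since $m^k = \prod_{i \neq j} p_i^{k n_i}$ and $j \notin J_b$, these divisibilities all hold, so $y_k \in \mathcal{S}(\mathfrak{b})$.

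The crucial step is the lower bound $\|y_k\|_{\mathcal{S}(\mathfrak{a}) \cup \{t^{\pm 1}\}} \geq 2k + 1$. By Lemma~\ref{lem:movegenerators} together with the fact that $\mathcal{S}(\mathfrak{a})$ is a subgroup of $H$ (Lemma~\ref{lem:confiningsubgroup}), this word length equals $2 r_0 + 1$, where $r_0 = \min\{r \geq 0 : \alpha^r(y_k) \in \mathcal{S}(\mathfrak{a})\}$. For $r < k$, any representation $\alpha^r(y_k) = M/n^s$ (with $s \geq k-r$ to keep $M$ integral) satisfies $M = m^k\, n^{s-k+r}$. Since $\gcd(m, p) = 1$ and $v_p(n) = e$, we have $v_p(M) = e(s-k+r)$. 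Membership in $\mathcal{S}(\mathfrak{a})$ would force $p^{s e} \mid M$ (because $j \in J_a$), i.e.\ $s - k + r \geq s$, i.e.\ $r \geq k$, a contradiction. Hence $r_0 \geq k$, with equality because $\alpha^k(y_k) = m^k \in \mathbb{Z} \subseteq \mathcal{S}(\mathfrak{a})$ by Remark~\ref{rem:Zinsubset}. This establishes $[\mathcal{S}(\mathfrak{a}) \cup \{t^{\pm 1}\}] \not\preccurlyeq [\mathcal{S}(\mathfrak{b}) \cup \{t^{\pm 1}\}]$.

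The reverse direction is settled by the symmetric argument: the hypothesis $\mathfrak{a} \not\leq \mathfrak{b}$ yields an index $j'$ with $\mathfrak{a}_{j'} = \mathbb{Z}_{p_{j'}^{n_{j'}}}$ and $\mathfrak{b}_{j'} = (0)$, and the elements $x_k = 1/p_{j'}^{k n_{j'}} \in \mathcal{S}(\mathfrak{a})$ have word length exactly $2k + 1$ in $\mathcal{S}(\mathfrak{b}) \cup \{t^{\pm 1}\}$ by the same computation. I expect the main technical obstacle to be the valuation analysis in the third paragraph: converting the defining ``tail-of-an-element-of-$\mathfrak{a}$'' description of $\mathcal{S}(\mathfrak{a})$ into a clean $p$-adic divisibility condition on the numerators of elements written as $M/n^s$, and ruling out membership uniformly across all admissible representations.
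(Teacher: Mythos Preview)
Your proposal is correct and follows essentially the same strategy as the paper: pick an index $j$ on which the two full ideals differ, produce elements of one confining subset whose minimal conjugation depth in the other goes to infinity, and invoke Lemma~\ref{lem:movegenerators} together with Lemma~\ref{lem:confiningsubgroup} to convert conjugation depth into word length.

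The only noteworthy difference is one of packaging. You construct the explicit witnesses $y_k = 1/p_j^{k n_j} = m^k/n^k$ and run a $p$-adic valuation computation on the numerator $M$ of $\alpha^r(y_k)=M/n^s$ to rule out membership in $\mathcal{S}(\mathfrak{a})$. The paper instead argues more qualitatively: it simply chooses some $x\in\mathcal{S}(\mathfrak{a})$ with $p(x)=-K$ and leading negative digit $c(x)\not\equiv 0\pmod{p_j^{n_j}}$ (possible because $\mathfrak{a}_j=\Z_{p_j^{n_j}}$), and then uses Remark~\ref{rem:0ideal} to observe that any element of $\mathcal{S}(\mathfrak{b})$ with nonzero fractional part has leading digit $\equiv 0\pmod{p_j^{n_j}}$ (because $\mathfrak{b}_j=(0)$), forcing $u\geq K$ in the expression $x=t^{-u}gt^u$. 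This avoids your valuation bookkeeping entirely; in fact your own elements $y_k$ already satisfy $c(y_k)=m^k\bmod n\not\equiv 0\pmod{p_j^{n_j}}$, so the paper's shorter argument applies to them directly. Your route is more explicit and self-contained; the paper's is cleaner once Remark~\ref{rem:0ideal} is available.
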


\begin{proof}
Since $\frak a=\frak a_1\times\cdots\times\frak a_k$ and $\frak b=\frak b_1\times\cdots\times\frak b_k$ are full ideals of $\Z_n$,  there exist $1\leq i,j\leq k$ such that $\frak a_i=(0)$, $\frak a_j=\Z_{p_j^{n_j}}$, $\frak b_i=\Z_{p_i^{n_i}}$, and $\frak b_j=(0)$. 

Consider $\mathcal C(\frak a)$ and $\mathcal C(\frak b)$.  For any $x=\pm x_r\cdots x_0.x_{-1}\cdots x_{p(x)}\in \mathcal C(\frak a)$, there is an element $a=\dots x_{-1}\dots x_{p(x)}\in \frak a$. Similarly, for any $y=\pm y_m\cdots y_0.y_{-1}\cdots y_{p(y)}\in\mathcal C(\frak b)$, there is an element $b=\dots y_{-1}\dots y_{p(y)} \in\frak b$. Since $\frak a_i=(0)$, \[x_{p(x)}\equiv 0\mod p_i^{n_i},\] and since $\frak b_j=(0)$, \[y_{p(y)}\equiv 0\mod p_j^{n_j},\] by Lemma \ref{lem:partialsums}.

For any $K\geq 0$, choose $x\in\mathcal C(\frak a)$ such that $p(x)=-K$ and $c(x)\not\equiv 0\mod p_j^{n_j}$, which is possible since $\frak a_j=\Z_{p_j^{n_j}}$. By Lemma \ref{lem:movegenerators} we can find an expression of $x$ as a minimal length word in the generating set $\mc C(\frak b)\cup\{t^{\pm1}\}$ of the form  \[x=t^{-u}(g_1+\cdots+g_w)t^u.\]  By Lemma \ref{lem:confiningsubgroup} we may write $g_1+\dots+g_w=g\in \mathcal C(\frak b)$, and the result is that $x=\alpha^{-u}(g)$. Now, since $g\in\mathcal C(\frak b)$ and $\frak b_j=(0)$, it follows that if $p(g)<0$, then $c(g)\equiv 0\mod p_j^{n_j}$. But this implies that $c(x)=c(\alpha^{-u}(g))\equiv 0\mod p_j^{n_j}$, which is a contradiction. Consequently, we must have $p(g)\geq 0$, which implies that $u\geq K$. Therefore \[\|x\|_{\mathcal C(\frak b)\cup\{t^{\pm 1}\}}\geq 2K+1.\]  Since $K$ was arbitrary, it follows that \[\sup_{x\in\mathcal C(\frak a)\cup\{t^{\pm1}\}}\|x\|_{\mathcal C(\frak b)\cup\{t^{\pm 1}\}}=\infty,\] and thus \[[\mathcal C(\frak a)\cup\{t^{\pm 1}\}]\not \preccurlyeq [\mathcal C(\frak b)\cup\{t^{\pm1}\}].\]

A similar argument shows that \[[\mathcal C(\frak a)\cup\{t^{\pm 1}\}]\not \succcurlyeq [\mathcal C(\frak b)\cup\{t^{\pm1}\}],\] and the result follows.
\end{proof}

\begin{lem} \label{lem:orderreversing}
Let $\frak a,\frak b$ be full ideals of $\Z_n$, and suppose $\frak a\lneq\frak b$. Then $[\mathcal C(\frak a)\cup\{t^{\pm 1}\}]\succnsim [\mathcal C(\frak b)\cup\{t^{\pm1}\}]$.
\end{lem}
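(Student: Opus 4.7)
The plan is to prove two things: first, that $[\mathcal{S}(\frak a)\cup\{t^{\pm1}\}] \succcurlyeq [\mathcal{S}(\frak b)\cup\{t^{\pm1}\}]$, and second, that this comparison is strict. For the non-strict part I would show $\mathcal{S}(\frak a) \subseteq \mathcal{S}(\frak b)$ directly. From $\frak a \leq \frak b$, fix $k$ with $n^k\frak a \subseteq \frak b$. Since $\frak a$ is an ideal we also have $n^k\frak a \subseteq \frak a$, and the two inclusions yield $\frak a \sim n^k\frak a$ in the sense of Definition \ref{def:equivideal}. Lemma \ref{lem:equividealsequalsubsets} then gives $\mathcal{S}(\frak a) = \mathcal{S}(n^k\frak a) \subseteq \mathcal{S}(\frak b)$. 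Hence every element of $\mathcal{S}(\frak a) \cup \{t^{\pm 1}\}$ has word length at most $1$ in $\mathcal{S}(\frak b) \cup \{t^{\pm 1}\}$, so $[\mathcal{S}(\frak b)\cup\{t^{\pm1}\}] \preccurlyeq [\mathcal{S}(\frak a)\cup\{t^{\pm1}\}]$.

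For strictness I would exploit the structure of full ideals: because $\frak a \lneq \frak b$ implies $\frak b \not\leq \frak a$, a component-wise analysis (analogous to the one in the proof of Lemma \ref{lem:equivalentfull}) produces an index $j$ with $\frak a_j = (0)$ and $\frak b_j = \Z_{p_j^{n_j}}$. I then adapt the argument of Lemma \ref{lem:injective}, swapping the roles of $\frak a$ and $\frak b$. For arbitrary $K \geq 0$, pick $y \in \mathcal{S}(\frak b)$ with $p(y) = -K$ and $c(y) \not\equiv 0 \pmod{p_j^{n_j}}$; this is possible because $\frak b_j = \Z_{p_j^{n_j}}$ leaves the $p_j^{n_j}$-component of the witness $b \in \frak b$ unrestricted, so the Chinese remainder theorem permits a choice of leading digit that both realizes the desired nonvanishing modulo $p_j^{n_j}$ and satisfies the $(0)$-constraints from the other components $\frak b_i$. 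By Lemmas \ref{lem:movegenerators} and \ref{lem:confiningsubgroup}, any minimal expression of $y$ over $\mathcal{S}(\frak a) \cup \{t^{\pm 1}\}$ reduces to $y = \alpha^{-u}(g)$ with $g \in \mathcal{S}(\frak a)$ and word length at least $2u + 1$. If $g$ is not an integer, then Remark \ref{rem:0ideal} (applied with $\frak a_j = (0)$) forces $c(g) \equiv 0 \pmod{p_j^{n_j}}$, while $c(g) = c(\alpha^{-u}(g)) = c(y) \not\equiv 0 \pmod{p_j^{n_j}}$; contradiction. Hence $g \in \Z$, and the requirement $p(y) = -K$ then forces $u \geq K$, giving $\|y\|_{\mathcal{S}(\frak a) \cup \{t^{\pm 1}\}} \geq 2K + 1$, which is unbounded as $K \to \infty$.

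The main obstacle I anticipate is Step~2, specifically producing a $y \in \mathcal{S}(\frak b)$ with the prescribed leading digit class and leading negative place while simultaneously respecting the vanishing constraints from the $(0)$-components of $\frak b$; this is a CRT argument but deserves careful verification. Once $y$ is in hand, the word-length lower bound is a direct adaptation of the digit-shift bookkeeping in Lemma \ref{lem:injective}.
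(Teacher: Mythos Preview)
Your proposal is correct and follows essentially the same route as the paper. The paper observes directly that for full ideals $\frak a\lneq\frak b$ forces $\frak a\subsetneq\frak b$, giving $\mathcal S(\frak a)\subseteq\mathcal S(\frak b)$ immediately; your detour through $n^k\frak a$ and Lemma~\ref{lem:equividealsequalsubsets} reaches the same inclusion and is a harmless (slightly more general) variant. For strictness the paper simply says ``an argument analogous to the proof of Lemma~\ref{lem:injective}'', which is exactly the digit-shift and word-length argument you spell out; your CRT concern about producing the element $y$ is the same existence issue implicit in Lemma~\ref{lem:injective} and is handled identically.
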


\begin{proof}
Since $\frak a\lneq \frak b$ and $\frak a=\frak a_1\times \cdots\times\frak a_k,\frak b=\frak b_1\times\cdots\times \frak b_k$ are both full ideals of $\Z_n$, we have $\frak a\subsetneq \frak b$. Therefore, there exists some $1\leq i\leq k$ such that $\frak a_i=(0)$ while $\frak b_i=\Z_{p_i^{n_i}}$. Consequently, for all $a=\dots a_2a_1a_0\in\frak a$, we have $a_0\equiv 0\mod p_i^{n_i}$ by Lemma \ref{lem:partialsums}.  

We first show that $\mathcal C(\frak a)\subseteq \mathcal C(\frak b)$. Let $x=\pm x_r\cdots x_1x_0.x_{-1}\cdots x_{p(x)}\in\mathcal C(\frak a)$. Then  there exists some $a\in \frak a$ such that $a=\dots x_{-1}\dots x_{p(x)}$. Since $\frak a\subseteq \frak b$, we have $a\in\frak b$, and so by the definition of $\mathcal C(\frak b)$, it follows that $x\in\mathcal C(\frak b)$. Therefore $C(\frak a)\cup\{t^{\pm 1}\}\subseteq \mathcal C(\frak b)\cup\{t^{\pm1}\}$, and  \[[\mc C(\frak a)\cup\{t^{\pm 1}\}]\succcurlyeq [\mathcal C(\frak b)\cup\{t^{\pm1}\}].\]

Finally, an argument analogous to the proof of Lemma \ref{lem:injective} shows that $[\mathcal C(\frak a)\cup\{t^{\pm 1}\}]\neq[\mathcal C(\frak b)\cup\{t^{\pm1}\}]$, completing the proof.
\end{proof}

Recall that given a subset $Q\subseteq H$ which is confining under the action of $\alpha$, there is an associated ideal $\mathcal I(Q)$ of $\Z_n$ defined by (\ref{eqn:idealL}).

\begin{lem} \label{lem:TequivSLT}
For any confining subset $Q$ under the action of $\alpha$, we have $[Q\cup\{t^{\pm1}\}]= [\mathcal C(\mathcal I(Q))\cup\{t^{\pm1}\}]$. 
\end{lem}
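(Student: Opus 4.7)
The plan is to establish both inequalities $[Q \cup \{t^{\pm 1}\}] \preccurlyeq [\mathcal S(\mathcal L(Q)) \cup \{t^{\pm 1}\}]$ and $[\mathcal S(\mathcal L(Q)) \cup \{t^{\pm 1}\}] \preccurlyeq [Q \cup \{t^{\pm 1}\}]$ separately.

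The first inequality follows directly from Lemma~\ref{lem:SLTsubsetT}: it provides $M \geq 0$ with $\mathcal S(\mathcal L(Q)) \subseteq \alpha^{-M}(Q)$, so every $x \in \mathcal S(\mathcal L(Q))$ can be written as $x = t^{-M} \alpha^M(x) t^M$ with $\alpha^M(x) \in Q$, giving word length at most $2M+1$ in $Q \cup \{t^{\pm 1}\}$. This uniform bound on the $Q\cup\{t^{\pm 1}\}$-word length of elements of $\mathcal S(\mathcal L(Q))$ is exactly the desired inequality.

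For the reverse inequality, I would apply Lemma~\ref{lem:elements} symmetrically: with base set $\mathcal S(\mathcal L(Q))$ (which is confining under $\alpha$ by Lemma~\ref{lem:idealtoconfining}) and added set $S = Q$. To do so, one needs a uniform $K' \geq 0$ with $\alpha^{K'}(Q) \subseteq \mathcal S(\mathcal L(Q))$. Once established, Lemma~\ref{lem:elements} produces $\overline{\mathcal S(\mathcal L(Q))} := \mathcal S(\mathcal L(Q)) \cup \bigcup_{i \geq 0} \alpha^i(Q) \supseteq Q$ with $[\mathcal S(\mathcal L(Q)) \cup \{t^{\pm 1}\}] = [\overline{\mathcal S(\mathcal L(Q))} \cup \{t^{\pm 1}\}]$, and the containment $Q \cup \{t^{\pm 1}\} \subseteq \overline{\mathcal S(\mathcal L(Q))} \cup \{t^{\pm 1}\}$ then yields $[\overline{\mathcal S(\mathcal L(Q))} \cup \{t^{\pm 1}\}] \preccurlyeq [Q \cup \{t^{\pm 1}\}]$, completing the argument.

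The main obstacle is producing this uniform $K'$. I would split $q \in Q$ into two cases. If $q_{\text{frac}} \in \mathcal S(\mathcal L(Q))$ then $q \in \mathcal S(\mathcal L(Q))$ outright (using $\Z \subseteq \mathcal S(\mathcal L(Q))$ for $q_{\text{int}}$), and any $K' \geq 0$ suffices since $\alpha(\mathcal S(\mathcal L(Q))) \subseteq \mathcal S(\mathcal L(Q))$. For the remaining ``exceptional'' elements with $q_{\text{frac}} \notin \mathcal S(\mathcal L(Q))$, the key claim is that their depths are bounded by some uniform $D$: an infinite exceptional sequence of unbounded depth would, after a diagonal compactness argument on $\Z_n$ extracting and stabilizing the deepest non-zero digit as in the ideal construction of Lemma~\ref{lem:idealtoconfining}, produce an element $b^* \in \mathcal L(Q)$ matching their deep-end digits in all places, contradicting exceptionality. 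Taking $K' = D$ then gives $\alpha^{K'}(q) \in \Z \subseteq \mathcal S(\mathcal L(Q))$ for the remaining (bounded depth) elements, delivering the needed uniform bound.
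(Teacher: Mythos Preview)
Your first inequality and the overall reduction (to finding a uniform $K'$ with $\alpha^{K'}(Q)\subseteq\mathcal S(\mathcal L(Q))$) match the paper. The gap is your central claim that exceptional elements of $Q$ have uniformly bounded depth. Your compactness argument cannot establish this: the diagonal extraction produces $b^*\in\mathcal L(Q)$ whose bottom digits match the \emph{deepest} digits of the $q^i$, but exceptionality concerns the \emph{full} fractional string $a^i_{-1}\ldots a^i_{-s_i}$. Since $s_i\to\infty$, the element $b^*$ only ever matches a bounded tail of each $q^i$, not the top digit $a^i_{-1}$ where the obstruction may live, so no contradiction arises.

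In fact the claim is false. Take $n=6$, let $\frak a=\Z_2\times\{0\}\subset\Z_6$, set $Q_0=\mathcal S(\frak a)$, fix $z\in\frak a$ with $z_0\neq 0$, and for each $i\ge 2$ let $q^i$ have fractional part $0.a^i_{-1}z_{i-2}\cdots z_0$ with $a^i_{-1}$ chosen so this length-$i$ string does \emph{not} extend to $\frak a$ (such a choice always exists, while the length-$(i-1)$ tail $z_{i-2}\cdots z_0$ does extend). Then $Q=Q_0\cup\{\pm q^i:i\ge 2\}$ is confining with $k_0=1$ (since $\alpha(\pm q^i)\in Q_0$), one checks $\mathcal L(Q)=\frak a$, and each $q^i$ is exceptional of depth $i$. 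So exceptional depth is unbounded, even though $\alpha(Q)\subseteq\mathcal S(\mathcal L(Q))$ holds with $K'=1$. The paper circumvents this by a different reduction: starting from $a\in Q$ with $\ell=\inf\{k:\alpha^k(a)\in\mathcal S(\mathcal L(Q))\}$ large, it finds the maximal $t$ with $a_{-t}\ldots a_{-s}$ non-extendable, then uses the additive inverse in $\Z_n$ of an extension of $a_{-t-1}\ldots a_{-s}$ to build (inside $Q$) a new element whose \emph{last digit itself} cannot appear as the last digit of any element of $\mathcal L(Q)$. Only after that reduction does the compactness argument give the contradiction.
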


\begin{proof}
By Lemma \ref{lem:SLTsubsetT}, we have that $\mathcal C(\mathcal I(Q)) \subseteq \alpha^{-M}(Q)$ for some $M$. Note that this implies that \[[Q\cup\{t^{\pm1}\}]\preccurlyeq [\mathcal C(\mathcal I(Q))\cup\{t^{\pm1}\}].\]   We will show that $[\mathcal C(\mathcal I(Q))\cup\{t^{\pm1}\}]\preccurlyeq [Q\cup\{t^{\pm1}\}]$. Suppose this is not the case. Then $Q\not\subseteq \alpha^{-k}(\mathcal{C}(\mathcal{I}(Q)))$ for any $k$ and hence there exist elements \[a=a_r \cdots a_0. a_{-1} \cdots a_{-s}\in Q\] with \[\inf \{k \mid \alpha^k(a)\in \mathcal{C}(\mathcal{I}(Q))\}\] arbitrarily large. Let $a$ be an element as above, let \[\ell=\inf \{k \mid \alpha^k(a)\in \mathcal{C}(\mathcal{I}(Q))\},\] and assume $\ell>k_0$, where $k_0$ is large enough that $\alpha^{k_0}(Q+Q)\subseteq Q$. Let $t\leq s$ be largest with the property that there does not exist an element of the form \[\dots a_{-t} \dots a_{-s} \in \mathcal{I}(Q).\] Note that $t> \ell$, for otherwise we would have \[\alpha^{\ell-1}(a)= a_r \cdots a_0 a_{-1} \cdots a_{-\ell+1}.a_{-\ell} \cdots a_{-s}\in \mathcal{C}(\mathcal{I}(Q)),\] contradicting the definition of $\ell$ as an infimum. We consider two cases:

\begin{enumerate}[(1)]
\item If $t=s$, then by definition there does not exist an element of $\mathcal{I}(Q)$ with one's digit $a_{-s}$. In other words, there does not exist an element of the form $\ldots a_{-s}$ in $\mathcal{I}(Q)$.

\item On the other hand suppose that $t<s$. Then by definition of $t$, there exists an element \[x= \dots x_2 x_1 x_0 a_{-t-1} \dots a_{-s}\in \mathcal{I}(Q).\] Let $y=\dots y_2 y_1 y_0\in \mathcal{I}(Q)$ be the additive inverse of $x$. That is, \[\begin{array}{l l l l l l}
& \cdots & x_0 & a_{-t-1} & \cdots & a_{-s} \\
+ & \cdots & y_{s-t} & y_{s-t-1} & \cdots & y_0 \\
\hline
& \cdots & 0 & 0 & \cdots & 0 \\\end{array}\] By definition of $\mathcal{I}(Q)$,  there exists an element \[b=b_u \cdots b_0.y_{s-1} y_{s-2} \cdots y_0 \in Q.\] We have $c=a+b\in Q+Q$, where $c=c_v\cdots c_0.c_{-1} \cdots c_{-t}$ is given by \[\begin{array}{l l l l l l l l l l l l l}
 & & & & a_r & \cdots & a_0 . & a_{-1} & \cdots & a_{-t} & a_{-t-1} & \cdots & a_{-s} \\
+  & & b_u & \cdots & b_r & \cdots & b_0 . & y_{s-1} & \cdots & y_{s-t} & y_{s-t-1} & \cdots & y_0 \\
\hline
c_v & \cdots & c_u & \cdots & c_r & \cdots & c_0. & c_{-1} & \cdots & c_{-t} & 0 & \cdots & 0 \\
 \end{array}\] (note, we are assuming in the above expression that $u\geq r$, but the case $u< r$ is identical). Therefore \[\alpha^{k_0}(c)=c_v \cdots c_0 c_{-1} \cdots c_{-k_0}.c_{-k_0-1} \cdots c_{-t} \in Q.\] Note that there does not exist an element of $\mathcal{I}(Q)$ whose one's digit is $c_{-t}$. For suppose that $z=\dots z_2 z_1 z_0 c_{-t}$ is such an element. Then we also have \[n^{s-t}z= \dots z_2 z_1 z_0 c_{-t} \underbrace{0 \dots \dots \dots 0}_{s-t\text{ times}}\in \mathcal{I}(Q)\] and hence $n^{s-t}z-y\in\mathcal{I}(Q)$ is given by \[\begin{array}{l l l l l l}
 & \cdots & c_{-t} & 0 & \cdots & 0 \\
- & \cdots & y_{s-t} & y_{s-t-1}& \cdots & y_0 \\
\hline 
& \cdots & a_{-t} & a_{-t-1} & \cdots & a_{-s} \\
  \end{array}\] contradicting the definition of $t$.
  
 \end{enumerate}

Taking $d=a$ in Case (1) above or $d=\alpha^{k_0}(c)=c_v \cdots c_{-k_0}. c_{-k_0-1} \cdots c_{-t}$ in Case (2) above, we have shown so far that there are elements $d_w \cdots d_0.d_{-1} \cdots d_{-u}\in Q$ with $u$ arbitrarily large (at least $ \ell-k_0$) and with the property that there does not exist any element of the form $\ldots d_{-u}$ in $\mathcal{I}(Q)$. That is, there exists a sequence $u_i \to \infty$ and a sequence $\{d^i\}_{i=1}^\infty \subset Q$ with $p(d^i)=-u_i$, and $c(d^i)=d^i_{-u_i}$ with the property that there does not exist an element of the form $\ldots d^i_{-u_i}$ in $\mathcal{I}(Q)$. Writing $d^i=d_{w_i}^i \cdots d_0^i . d_{-1}^i \cdots d_{-u_i}^i$ we may pass to a subsequence to assume that the sequence of integers $d_{-1}^i \cdots d_{-u_i}^i \in \Z\subseteq \Z_n$ converges to a number $\ldots e_2 e_1 e_0\in \Z_n$.

We claim that $\ldots e_2 e_1 e_0\in \mathcal{I}(Q)$. To prove the claim, note that given any $t\geq 0$ and all large enough $i$, the number $d_{-1}^i \cdots d_{-u_i}^i$ has \[d^i_{-u_i}=e_0,\,\,\, d^i_{-u_i+1}=e_1,\,\,\, \ldots\,\,\, d^i_{-u_i+t}=e_t.\] We have \[\begin{array}{l l l} \alpha^{u_i-t-1}(d^i) & =& d_{w_i}^i \cdots d_0^i d_{-1}^i \cdots d_{-u_i+t+1}^i . d_{-u_i+t}^i d_{-u_i+t-1}^i \cdots d_{-u_i}^i \\ &= & d_{w_i}^i \cdots d_{-u_i+t+1}^i. e_t e_{t-1} \cdots e_0\end{array}\]  for all such $i$. Since $\alpha^{u_i-t-1}(d^i)\in Q$ and $t$ is arbitrary, this proves the claim. However, this is a contradiction, as we have $d_{-u_i}^i=e_0$ for all large enough $i$, while there does not exist an element of the form $\ldots d_{-u_i}^i$ in $\mathcal{I}(Q)$ for any $i$. This completes the proof.
\end{proof}

Define $\mathcal J_n$ to be the poset $2^{\{1,\dots, k\}}\setminus\{\{1,\dots, k\}\}$ with the partial order given by inclusion. Recall that $\mathcal K_n$ is the poset $2^{\{1,\ldots k\}} \setminus \{\emptyset\}$. We have that $\mathcal K_n$ is isomorphic to the \textit{opposite poset} of $\mathcal J_n$.

We now define a map \[\Phi\colon\mathcal J_n\to \mathcal H_{qp}(BS(1,n))\] as follows. Given $A\in\mathcal J_n$, let $\frak a=\frak a_1\times \ldots \times \frak a_k$ be the full ideal of $\Z_n$ defined by $\frak a_i=(0)$ if and only if $i\in A$, and let \begin{equation}\label{eqn:Phi} \Phi(A)=[\mathcal C(\frak a)\cup\{t^{\pm 1}\}].\end{equation}

\begin{prop}\label{prop:qpstructure}
The map $\Phi\colon \mathcal J_n\to \mathcal H_{qp}(BS(1,n))$ defined in (\ref{eqn:Phi}) is an order-reversing injective map. Hence $\Phi$ induces an injective homomorphism of posets \[\mathcal K_n \to \mathcal H_{qp}(BS(1,n)).\] Moreover, $H_{qp}(BS(1,n))$ contains exactly one additional structure which is incomparable to every $[Y]\in \Phi(\mathcal J_n)$.
\end{prop}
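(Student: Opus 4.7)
The plan is to invoke Proposition \ref{prop:qpconfining} to reduce the classification of $\mathcal H_{qp}(BS(1,n))$ to that of symmetric subsets strictly confining $H$ under $\alpha$ or $\alpha^{-1}$. Proposition \ref{prop:singlealpha^-1} already collapses the $\alpha^{-1}$ case to the single structure $[Q^-\cup\{t^{\pm 1}\}]$, which will be the asserted additional element. The remaining work is (i) to identify the $\alpha$ case with $\Phi(\mathcal J_n)$, and (ii) to verify incomparability of $[Q^-\cup\{t^{\pm 1}\}]$ with every $\Phi(A)$.

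For (i), well-definedness of $\Phi$ requires that $\mathcal S(\frak a)$ be strictly confining whenever $A\in\mathcal J_n$. Confinement is Lemma \ref{lem:idealtoconfining}; strictness follows because the exclusion defining $\mathcal J_n$ avoids the full ideal producing $\mathcal S(\frak a)=H$, so by Remark \ref{rem:0ideal} there is an index $i$ with $\frak a_i=(0)$, forcing certain ones digits to be forbidden and hence $\mathcal S(\frak a)\subsetneq H$; then $\alpha(\mathcal S(\frak a))=\mathcal S(\frak a)$ would contradict confining condition (b), since $\alpha$-invariance of the subgroup $\mathcal S(\frak a)$ would imply $\bigcup_k\alpha^{-k}(\mathcal S(\frak a))=\mathcal S(\frak a)=H$. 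Injectivity and the order-reversing property are immediate from Lemmas \ref{lem:injective} and \ref{lem:orderreversing}; composing with the complementation isomorphism between $\mathcal K_n$ and the opposite poset of $\mathcal J_n$ produces the required injective embedding $\mathcal K_n\hookrightarrow\mathcal H_{qp}(BS(1,n))$. For surjectivity onto the $\alpha$ case: any strictly confining $Q$ satisfies $[Q\cup\{t^{\pm 1}\}]=[\mathcal S(\mathcal L(Q))\cup\{t^{\pm 1}\}]$ by Lemma \ref{lem:TequivSLT}; Lemma \ref{lem:equivalentfull} produces the unique full ideal $\frak a\sim\mathcal L(Q)$; and Lemma \ref{lem:equividealsequalsubsets} gives $\mathcal S(\frak a)=\mathcal S(\mathcal L(Q))$. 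Strict confinement rules out the excluded full ideal, so $\frak a$ lies in the image of $\Phi$.

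For (ii), I would establish the incomparability by two word-length estimates. Testing with $n^{-s}\in Q^-$ for arbitrarily large $s$: by Lemma \ref{lem:movegenerators} combined with the subgroup property of $\mathcal S(\frak a)$ (Lemma \ref{lem:confiningsubgroup}), any expression compresses to $n^{-s}=\alpha^{-u}(g)$ with $g=n^{u-s}\in\mathcal S(\frak a)$; Remark \ref{rem:0ideal} then forces the ones digit of $g$ to be divisible by some $p_i^{n_i}$, ruling out $u<s$ and yielding word length at least $2s+1$, which is unbounded in $s$. Testing with $n^k\in\Z\subseteq\mathcal S(\frak a)$ for arbitrarily large $k$: any expression $n^k=t^{-u}(h_1+\cdots+h_w)t^u$ with $h_i\in Q^-$ forces $h_1+\cdots+h_w=n^{u+k}$, but $|h_i|<1$ gives $w>n^{u+k}\geq n^k$, so the word length grows at least as $n^k$. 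The main obstacle will be the first of these estimates: it essentially depends on the arithmetic fact that the ones digits of elements of a proper full ideal $\frak a\neq\Z_n$ lie in a proper subgroup of $\{0,\dots,n-1\}$---this is the algebraic manifestation of the geometric distinction between the $\alpha$ and $\alpha^{-1}$ confining regimes.
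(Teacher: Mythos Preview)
Your treatment of part (i) matches the paper's proof essentially line for line: Lemmas \ref{lem:injective} and \ref{lem:orderreversing} for injectivity and order-reversal, then Lemma \ref{lem:TequivSLT}, Lemma \ref{lem:equivalentfull}, and Lemma \ref{lem:equividealsequalsubsets} to show every $\alpha$-side quasi-parabolic structure lies in $\Phi(\mathcal J_n)$. You are in fact slightly more careful than the paper in explicitly checking that $\Phi$ lands in $\mathcal H_{qp}$ rather than $\mathcal H_\ell$.

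Your approach to part (ii), however, is genuinely different. The paper argues incomparability dynamically: via Proposition \ref{prop:planeaction} and the Bass-Serre tree description of Section \ref{section:bass-serre}, the common boundary fixed point for $[Q^-\cup\{t^{\pm1}\}]$ is the \emph{attracting} fixed point of $t$, while for each $\Phi(A)$ it is the \emph{repelling} one; a domination relation would then force both boundary points to be globally fixed, making the smaller action lineal, a contradiction. Your route via explicit word-length estimates is more elementary and avoids invoking the geometric models, at the cost of being more computational. Both are perfectly valid.

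There is one slip in your second estimate. Lemma \ref{lem:movegenerators} is stated for subsets with $\alpha(Q)\subseteq Q$; for $Q^-$ one has $\alpha^{-1}(Q^-)\subseteq Q^-$ instead, so the normal form obtained by the analogous shuffling is $n^k=t^{u}(h_1+\cdots+h_w)t^{-u}$, giving $h_1+\cdots+h_w=n^{k-u}$ rather than $n^{u+k}$. Your bound $w>n^{u+k}\geq n^k$ is therefore not available. The argument is easily repaired: if $u\geq k$ the word length is already $\geq 2k$, while if $u<k$ then $|h_i|<1$ forces $w>n^{k-u}$, and minimizing $2u+n^{k-u}$ over $0\leq u<k$ still gives a lower bound tending to infinity with $k$. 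So the strategy is sound, but the case split is needed.
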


\begin{proof}
Lemmas \ref{lem:injective} and  \ref{lem:orderreversing} show that the map $\Phi$ is an injective order-reversing map of posets.

By Proposition \ref{prop:qpconfining}, if $[S]\in \mathcal{H}_{qp}(BS(1,n))$, then there exists a $Q\subseteq H$ which is strictly confining under the action of $\alpha$ or $\alpha^{-1}$ and such that $[S]=[Q\cup \{t^{\pm 1}\}]$. 

Fix $[S]\in H_{qp}(BS(1,n))$ such that the corresponding subset $Q$ is strictly confining under the action of $\alpha$, and consider the ideal $\mathcal I(Q)$. By Lemma \ref{lem:TequivSLT}, $[S]= [\mathcal C(\mathcal I(Q))\cup \{t^{\pm1}\}]$. Moreover, by Lemma \ref{lem:equivalentfull}, there is a proper full ideal $\frak a\sim\mathcal I(Q)$, and $\mathcal C(\mathcal I(Q))=\mathcal C(\frak a)$ by Lemma \ref{lem:equividealsequalsubsets}. Thus $[S]= [\mathcal C(\frak a)\cup \{t^{\pm1}\}]$. Let $A=\{1\leq i\leq k\mid \frak a_i=(0)\}\subseteq \mathcal J_n$. Then $[S]=\Phi(A)$, and so every quasi-parabolic structure whose associated subset is strictly confining under the action of $\alpha$ is in the image of $\Phi$.

By Proposition \ref{prop:singlealpha^-1}, $\mathcal H_{qp}(BS(1,n))$ has a single additional element, $[Q^-\cup\{t^{\pm1}\}]$, where $Q^-$, defined in (\ref{eqn:Q-}), is strictly confining under the action of $\alpha^{-1}$. It remains to show that $[Q^-\cup\{t^{\pm1}\}]$ is incomparable to all $[S]\in \mathcal H_{qp}(BS(1,n))\setminus\{[Q^-\cup\{t^{\pm1}\}]\}$.

To see this last fact, note that the action $BS(1,n)\curvearrowright \Gamma(BS(1,n),Q^-\cup \{t^{\pm 1}\})$ is equivalent to the action $BS(1,n)\curvearrowright \hyp^2$ by Proposition \ref{prop:planeaction}. Hence in this action, the common fixed point of all elements of $BS(1,n)$ is the \textit{attracting} fixed point of $t$. On the other hand, for $[S]\in \mathcal H_{qp}(BS(1,n))\setminus\{[Q^-\cup\{t^{\pm1}\}]\}$, $BS(1,n)\curvearrowright \Gamma(BS(1,n),S)$ is equivalent to the action of $BS(1,n)$ on one of the Bass-Serre trees described in Section \ref{section:bass-serre}. Hence in the action $BS(1,n)\curvearrowright \Gamma(BS(1,n),S)$, the common fixed point of all elements of $BS(1,n)$ is the \textit{repelling} fixed point of $t$. If we had, for example, $[Q^-\cup \{t^{\pm 1}\}]\preccurlyeq [S]$ then this would imply that every element of $BS(1,n)$ would fix the repelling fixed point of $t$ in $\partial \Gamma(BS(1,n), Q^- \cup \{t^{\pm 1}\})$ as well as the attracting fixed point of $t$. This would imply that the action $BS(1,n)\curvearrowright \Gamma(BS(1,n), Q^- \cup \{t^{\pm 1}\})$ is lineal, which is a contradiction.
\end{proof}

\subsection{Proof of Theorem \ref{thm:BS1nstructure}}  
Proposition \ref{prop:qpstructure} gives a complete description of $\mathcal H_{qp}(BS(1,n))$. We now turn our attention to other hyperbolic structures.

We first show that for any $n\geq 2$, $|\mathcal{H}_l(BS(1,n))|=1$. Consider an action $BS(1,n)\curvearrowright X$ with $X$ hyperbolic. Then every element of $H$ must act elliptically or parabolically. For if $g\in H$ then $tgt^{-1}$ and $g$ have the same stable translation length. However, $tgt^{-1}=g^n$ has stable translation length equal to $n$ times the stable translation length of $g$. This is only possible if the stable translation length of $g$ is 0. Hence the induced action of $H$ on $X$ is either elliptic or parabolic. 
Since $H\trianglelefteq BS(1,n)$, if $H\curvearrowright X$ is parabolic, then every element of $BS(1,n)$ must fix the unique point in $\partial X$ which is fixed by $H$,  hence $BS(1,n)\curvearrowright X$ is parabolic or quasi-parabolic.  In particular, if $BS(1,n)\curvearrowright X$ is a lineal action then $H\curvearrowright X$ is elliptic. This shows that if $[S] \in \mathcal H_l(BS(1,n))$ then $[S]\preccurlyeq [H\cup \{t^{\pm 1}\}]$. But by \cite[Theorem~4.22]{ABO}, if $[A]\preccurlyeq [B]$ and both structures are lineal, then $[A]=[B]$. Therefore $[S]=[H\cup \{t^{\pm 1}\}]$.

Every quasi-parabolic structure dominates the lineal structure defined by its Busemann quasi-morphism. Since $|\mathcal H_l(BS(1,n))|=1$, it follows that every element of $\mathcal H_{qp}(BS(1,n))$ dominates this single lineal structure.

For any $n\geq 2$,  $BS(1,n)$ is solvable, and so contains no free subgroups. Thus by the Ping-Pong lemma, $H_{gt}(BS(1,n))=\emptyset$.

Finally, for any group $G$, $\mathcal H_e(G)$ has a single element which is the smallest element in $\mathcal H(G)$, completing the proof of Theorem \ref{thm:BS1nstructure}.

%%%%%%%%%%%%%%%%%%%%%%%%%%%%%%%
\section{Generating confining subsets} \label{section:generating}

Consider a group $G=H\rtimes \Z$ where $\Z=\langle t\rangle$ acts by $tht^{-1}=\alpha(h)$ for any $h\in H$, where $\alpha\in \Aut(H)$.  In this section, we give a general method for constructing confining subsets of such groups.

Let $S$ be a symmetric subset of $H$ with the following properties:
\begin{enumerate}[(i)]
\item $\alpha(S)\subseteq S$
\item $\bigcup_{n\geq 0} \alpha^{-n}(S)$ generates $H$.
\end{enumerate}

Define a subset $Q^i\subseteq H$ by \[Q^i=Q^i_0\cup Q^i_1\cup Q^i_2\cup \ldots\] where $Q^i_0=S$ and \[Q^i_{n+1}=Q^i_n\cup \alpha^i(Q^i_n\cdot Q^i_n)\] for all $n\geq 0$. In other words, $Q^i$ is the smallest subset of $H$ containing $S$ and with the property that $\alpha^i(Q^i \cdot Q^i)\subset Q^i$.

\begin{prop}\label{lem:Q^i}
$Q^i$ is a confining subset of $H$ under the action of $\alpha$.
\end{prop}

\begin{proof}
First, we prove by induction that $\alpha(Q^i_n)\subseteq Q^i_n$ for all $n$. The base case $n=0$ holds by point (i). Suppose for induction that $\alpha(Q^i_n)\subseteq Q^i_n$. Now if $x\in Q^i_{n+1}$ then $x\in Q^i_n$ or $x\in \alpha^i(Q^i_n\cdot Q^i_n)$. In the first case we have $\alpha(x)\in Q^i_n\subseteq Q^i_{n+1}$. Otherwise, we may write $x=\alpha^i(yz)$ where $y,z\in Q^i_n$. Then we have \[\alpha(x)=\alpha^i(\alpha(y)\alpha(z))\] and since $\alpha(y),\alpha(z)\in Q^i_n$ we have $\alpha(x)\in \alpha^i(Q^i_n\cdot Q^i_n)\subseteq Q^i_{n+1}$.

Since $\alpha(Q_n^i)\subset Q_n^i$ for all $n$, we have $\alpha(Q^i)\subset Q^i$.

Now to see that $\alpha^i(Q^i\cdot Q^i)\subseteq Q^i$ simply use the fact that $\alpha^i(Q^i_n\cdot Q^i_n)\subseteq Q^i_{n+1}$ and $Q^i$ is the union of the sets $Q^i_n$ for $n\geq 0$.

Finally, we prove that $H=\bigcup_{n\geq 0} \alpha^{-n}(Q^i)$. We use fact (ii) above and induction on the word length of an element $h\in H$ in the semigroup generating set $\bigcup_{n\geq 0}\alpha^{-n}(S)$. If $h$ has word length one with respect to this generating set then $h=\alpha^{-n}(s)$ for some $s\in S$. Hence, $\alpha^n(h)=s\in Q^i_0$. Suppose for induction that if $h$ has word length at most $k$, then $\alpha^n(h)\in Q^i$ for some $n\geq 0$. Let $h$ have word length $k+1$. Then we may write $h=\alpha^{-n}(s)h'$ where $s\in Q^i$ and $h'$ has word length $k$. By induction there exists $m$ such that $\alpha^m(h')\in Q^i$. We have \[\alpha^{m+n}(h)=\alpha^m(s)\alpha^{m+n}(h').\] Since $\alpha(Q^i)\subseteq Q^i$ and $\alpha^m(h')\in Q^i$, we have $\alpha^{m+n}(h')\in Q^i$. Similarly $\alpha^m(s)\in Q^i$. Choose $r$ large enough such that $\alpha^m(s),\alpha^{m+n}(h')\in Q^i_r$. Then we have \[\alpha^{m+n+i}(h)=\alpha^i(\alpha^m(s)\alpha^{m+n}(h'))\in \alpha^i(Q^i_r \cdot Q^i_r)\subseteq Q^i_{r+1}.\] This completes the induction.
\end{proof}

%%%%%%%%%%%%%%%%%%%%%%%%%%%%%%%%%%%%%%%%%%%%%%%%%%%%%%%%%%%%%%%%%%%

By \cite[Proposition~4.6]{CCMT}, $[Q^i\cup \{t^{\pm 1}\}]\in \H_{qp}(G)\cup\H_\ell(G)$ for all $i$. Clearly we have 
\begin{equation}\label{eqn:chain}
[Q^1\cup \{t^{\pm 1}\}] \preccurlyeq [Q^2\cup \{t^{\pm 1}\}] \preccurlyeq [Q^3\cup \{t^{\pm 1}\}] \preccurlyeq \ldots.
\end{equation} 
In general, there is no reason that any of these inequalities need be strict.   However, for certain groups they are \emph{all} strict.  In the following proposition, we apply Proposition~\ref{lem:Q^i} to the lamplighter group $\Z\wr\Z\simeq \Z\left[x,\frac{1}{x}\right]\rtimes \Z$ to demonstrate this phenomenon.

\begin{prop}\label{prop:lamplighter}
Let $H=\Z\left[x,\frac{1}{x}\right]$ (the additive group of the Laurent polynomial ring over $\Z$) where $t$ acts on $H$ by $t p(x) t^{-1}=xp(x)$. Set $S=\{\pm1 ,\pm x,\pm x^2,\ldots\}$. Define $Q^i$ using $S$ as above. Then we have $[Q^i\cup \{t^{\pm 1}\}] \neq [Q^j\cup \{t^{\pm 1}\}]$ for $i<j$. Moreover, $[Q^i \cup \{t^{\pm 1}\}]\in \mathcal{H}_{qp}(H\rtimes \Z)$ for each $i$.
\end{prop}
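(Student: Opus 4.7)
My approach splits into two tasks: verifying each structure is quasi-parabolic rather than lineal, and exhibiting an explicit family of elements in $Q^i$ whose $Q^j$-word length diverges for $i<j$. The main technical work is a Kraft-type coefficient bound for single elements of $Q^j$.

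For the quasi-parabolicity, I will invoke \cite[Proposition~4.6]{CCMT} (cited just before the statement), which tells us that each $[Q^i\cup\{t^{\pm 1}\}]$ is automatically in $\mathcal H_{qp}(H\rtimes\Z)\cup\mathcal H_\ell(H\rtimes\Z)$, and lies in $\mathcal H_{qp}$ precisely when the confinement is strict. Induction on the tree construction gives $Q^i\subseteq \Z[x]$, whence $\alpha(Q^i)=xQ^i\subseteq x\Z[x]$ cannot contain $1\in S\subseteq Q^i$; therefore $\alpha(Q^i)\subsetneq Q^i$.

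For the order direction $[Q^j\cup\{t^{\pm 1}\}]\preccurlyeq[Q^i\cup\{t^{\pm 1}\}]$ when $i<j$, I plan to show $Q^j\subseteq Q^i$. Since $\alpha(Q^i)\subseteq Q^i$ iterates to $x^k Q^i\subseteq Q^i$ for all $k\geq 0$, for $y,z\in Q^i$ we get $x^j(y+z)=x^i(x^{j-i}y+x^{j-i}z)\in Q^i$, so $Q^i$ is closed under the operation defining $Q^j$; by minimality, $Q^j\subseteq Q^i$.

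To establish strict inequality I will consider $y_N=2^N x^{Ni}\in Q^i$, realized as a single element of $Q^i$ via the complete binary tree of depth $N$ whose $2^N$ leaves all equal $1\in S$ (so Kraft reads $2^N\cdot 2^{-N}=1$, and each leaf contributes $x^{iN}$). Thus $\|y_N\|_{Q^i\cup\{t^{\pm 1}\}}=1$. Using the normal form of Lemma~\ref{lem:movegenerators} (whose proof carries over verbatim to $H\rtimes\Z$), any expression of $y_N$ as a word in $Q^j\cup\{t^{\pm 1}\}$ has the shape $t^{-r}(q_1+\cdots+q_M)t^r$ with $q_m\in Q^j$, $r\geq 0$, and length $M+2r$, satisfying $\sum_m q_m=2^N x^{r+Ni}$. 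The crux, and the step I expect to be the main obstacle, is the coefficient bound that in any $q\in Q^j$ the coefficient of $x^\mu$ has absolute value at most $2^{\lfloor\mu/j\rfloor}$: a leaf of depth $d$ contributing to $x^\mu$ requires $jd\leq\mu$, and the Kraft identity $\sum N_d 2^{-d}=1$ caps the number of such leaves by $2^{\lfloor\mu/j\rfloor}$ (the maximum being attained by concentrating all leaves at depth $\lfloor\mu/j\rfloor$). Summing the coefficients over the $q_m$ at $\mu=r+Ni$ gives $2^N\leq M\cdot 2^{\lfloor(r+Ni)/j\rfloor}$, hence $M\geq 2^{N-\lfloor(r+Ni)/j\rfloor}$. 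An elementary optimization of $M+2r$ over $r\geq 0$ then yields $\|y_N\|_{Q^j\cup\{t^{\pm 1}\}}\gtrsim 2N(j-i)$, which diverges with $N$ and witnesses $[Q^i\cup\{t^{\pm 1}\}]\not\preccurlyeq[Q^j\cup\{t^{\pm 1}\}]$.
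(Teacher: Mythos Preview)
Your proof is correct and follows essentially the same strategy as the paper: the same test elements $2^{N}x^{Ni}\in Q^i$, the same coefficient bound $|c_\mu|\le 2^{\lfloor \mu/j\rfloor}$ for elements of $Q^j$, the same normal-form reduction via Lemma~\ref{lem:movegenerators}, and the same optimization of $M+2r$. Your Kraft-inequality packaging of the coefficient bound is a clean alternative to the paper's stage-by-stage induction on the sets $Q^j_r$, and your containment $Q^j\subseteq Q^i$ is an extra observation not present in the paper's proof, but otherwise the arguments coincide.
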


Before turning to the proof, we note that the existence of a countable chain of quasi-parabolic structures for this group follows from \cite[Theorem~1.4]{Bal}, which also gives additional information about the structure of $\mathcal H_{qp}(\Z\wr\Z)$, though it does not give a complete description.   In fact, one can see that such a chain exists by fixing any $m\in \Z_{>0}$ and  considering the sequence of quotients 
\[
\Z\wr \Z\onto \cdots \onto (\Z/m^3\Z) \wr \Z \onto (\Z/m^2\Z) \wr\Z \onto (\Z/m\Z)\wr \Z.
\]
By expressing each $(\Z/m^n\Z)\wr \Z$ as an HNN extension, we obtain a quasi-parabolic action of $(\Z/m^n\Z)\wr \Z$ on the associated Bass--Serre tree.  Since $\Z\wr \Z$ surjects onto $(\Z/m^n\Z)\wr \Z$, we see that $\Z\wr \Z$ acts on this Bass--Serre tree, as well.  From the sequence of quotients, it is clear that the collection of such actions will form a countable chain in $\mc H_{qp}(\Z\wr \Z)$.   This countable chain is equivalent to one constructed by Balasubramanya in \cite[Theorem~1.4]{Bal} associated to the nested subgroups $\cdots \leq m^n\Z \leq m^{n-1}\Z \leq \cdots \leq m\Z \leq \Z$.  In contrast, we do not expect that the countable chain described in Proposition~\ref{prop:lamplighter} corresponds to any chain constructed in~\cite{Bal}.

\begin{proof}[Proof of Proposition~\ref{prop:lamplighter}]
Note the following, which can be proven inductively:

\begin{enumerate}[(i)]
\item $Q^i_r$ consists of polynomials in $x$ (with no terms of negative degree),
\item if $p(x)\in Q^i_r \setminus Q^i_{r-1}$ then every term of $p(x)$ has degree at least $ri$,
\item the largest coefficient of a term of $p(x)\in Q^i_r$ is $2^r$.
\end{enumerate}

Hence we have the following table:

\noindent \begin{tabular}{l | l l l l l l l l l l l l l l l l}
Monomial & $1$ & $x$ & $x^2$ & $\ldots$ & $x^{i-1}$ & $x^i$ & $x^{i+1}$ & $\ldots$ & $x^{2i-1}$ & $x^{2i}$ & $x^{2i+1}$ & $\ldots$ & $x^{ri-1}$ & $x^{ri}$ & $x^{ri+1}$ & $\ldots$ \\
\hline
Largest coeff. & 1 & 1 & 1 & $\ldots$ & 1 & 2 & 2 & $\ldots$ & 2 & 4 & 4 & $\ldots$ & $2^{r-1}$ & $2^r$ & $2^r$ & $\ldots$
\end{tabular}

Here the entry under $x^k$ denotes the \textit{largest absolute value} of the coefficient of the degree $k$ term of any polynomial $p(x)\in Q^i$. A similar table holds for $Q^j$. In particular, we see that if $p(x)\in Q^j$ and $p(x)$ contains a term of degree $k$, then the absolute value of the coefficient of $x^k$ is at most $2^{\lfloor k/j\rfloor}$.

Note, in particular,a that the sequence \[1,\, 2x^i=\alpha^i(1+1),\, 4x^{2i}=\alpha^i(2x^i+2x^i),\, 8x^{3i}=\alpha^i(4x^{2i}+4x^{2i}),\ldots\] is contained in $Q^i$. Hence all have word length $1$ in the generating set $Q^i\cup \{t\}$. We claim that the word length of $2^r x^{ri}$ in the generating set $Q^j\cup \{t\}$ goes to infinity as $r\to \infty$. Let $\|\cdot\|_j$ denote word length with respect to the generating set $Q^j\cup \{t^{\pm 1}\}$.

To prove the claim, write \[2^rx^{ri}=g_1\ldots g_n\] where $g_1\ldots g_n$ is a word in $Q^j\cup \{t^{\pm 1}\}$ of minimal length $n=\|2^r x^{ri}\|_j$ representing $2^rx^{ri}$. Each $g_i$ is either $t$, $t^{-1}$, or a polynomial $p(x)\in Q^j$. By Lemma \ref{lem:movegenerators}, we may rewrite $2^r x^{ri}$ as a word of length $n$ of the form 
 \[2^rx^{ri}=t^{-k} (p_1(x)+\ldots+ p_m(x))t^l.\] Since the word on the right represents a polynomial, we must in fact have $k=l$ and we have \[2^rx^{ri}=\alpha^{-k}(p_1(x)+\ldots+p_m(x)) ,\] and therefore \[p_1(x)+\ldots +p_m(x) = 2^rx^{ri+k}.\] It follows that $n=2k+m$. Since each $p_*(x)$ contains $x^{ri+k}$ as a term with coefficient at most $2^{\lfloor (ri+k)/j\rfloor}$, we must have \[m \geq \frac{2^r}{2^{\lfloor (ri+k)/j\rfloor}}=2^{r-\lfloor (ri+k)/j\rfloor}\geq 2^{r-(ri+k)/j}=2^{(1-i/j)r-k/j}.\]

So to bound $n$ from below, it suffices to minimize $2k+m$ subject to the condition \[m=2^{(1-i/j)r-k/j}.\] Rewriting $2k+m$ in terms of $k$ yields \[2k+2^{(1-i/j)r-k/j}.\] Defining a function $f(k)=2k+2^{(1-i/j)r-k/j}$, we see that $f$ has a unique minimum at the unique zero of its derivative. The derivative with respect to $k$ is \[f'(k)=2-\frac{1}{j}\ln(2)2^{(1-i/j)r-k/j}.\] Solving the equation $f'(k)=0$ for $k$ yields \[k=(j-i)r-j\log_2\left(\frac{j}{\ln(2)}\right) -j.\] Since $j-i>0$ we must have $k\to \infty$ as $r\to \infty$ and, in particular, $n\to \infty$ as $r\to \infty$. In other words, $\|2^rx^{ri}\|_j\to \infty$ as $r\to \infty$.

For the final sentence, simply note that each $Q^i$ is strictly confining since $1\notin \alpha(Q^i)$ for any $i$.
\end{proof}

%%begin{rem}
The argument in the proof of Proposition~\ref{prop:lamplighter} does not work for the wreath product $\Z/n\Z\wr \Z\cong\Z/n\Z\left[x,\frac1x\right]\rtimes \Z$ since the generator of $\Z/nZ$ doesn't have infinite order.   Balasubramanya shows that $\Z/n\Z\wr \Z$ has only finitely many quasi-parabolic structures \cite[Theorem~1.4]{Bal}, hence only finitely many of the inequalities in \eqref{eqn:chain}  can be strict. %non-equivalent structures $[Q^i\cup\{t^{\pm 1}\}]$ for $\Z/n\Z\wr \Z$.
%\end{rem}

%%%%%%%%%%%%%%%%%%%%%%%%%%%%%%

\vspace{1cm}

\noindent \textbf{Carolyn R. Abbott } Department of Mathematics, Brandeis University, Waltham, MA 02453. \\
E-mail: \emph{cra2112@columbia.edu}

\bigskip

\noindent \textbf{Alexander J. Rasmussen } Department of Mathematics, University of Utah, Salt Lake City, UT 84112. \\
E-mail: \emph{rasmussen@math.utah.edu}

\end{document}